\newtheorem{theorem}{Theorem}[section]
\newtheorem{proposition}[theorem]{Proposition}
\newtheorem{lemma}[theorem]{Lemma}
\newtheorem{corollary}[theorem]{Corollary}
\theoremstyle{definition}
\newtheorem{example}[theorem]{Example}
\numberwithin{equation}{section}
\newcommand{\mcal}{\mathcal}
\newcommand{\set}[1]{\left\{#1 \right\}}
\newcommand{\R}{\mathbb{R}}
\newcommand{\Z}{\mathbb{Z}}
\newcommand{\N}{\mathbb{N}}
\newcommand{\f}{\infty}
\newcommand{\la}{\langle}
\newcommand{\ra}{\rangle}
\newcommand{\sse}{\subseteq}
\newcommand{\ba}{\boldsymbol{\alpha}}
\newcommand{\dimh}{\dim_{\rm H}}
\newcommand{\dimp}{\dim_{\rm P}}
\newcommand{\bp}{\mathbf{p}}
\newcommand{\bu}{\boldsymbol{\alpha}}
\newcommand{\bv}{\boldsymbol{\beta}}
\newcommand{\bn}{\mathbf{n}}
\newcommand{\rmnum}[1]{\romannumeral #1}
\title[Existence and Spectrality of random measures]{Existence and Spectrality of random measures generated by infinite convolutions}
\author[H. Liu]{Hongyi Liu}
\address[H. Liu]{School of Mathematical Sciences,  Key Laboratory of MEA(Ministry of Education) \& Shanghai Key Laboratory of PMMP,  East China Normal University, Shanghai 200241, China}
\email{51265500101@stu.ecnu.edu.cn}
\author[J. J. Miao]{Jun Jie Miao}
\address[J. J. Miao]{School of Mathematical Sciences,  Key Laboratory of MEA(Ministry of Education) \& Shanghai Key Laboratory of PMMP,  East China Normal University, Shanghai 200241, China }
\email{jjmiao@math.ecnu.edu.cn}
\author[H. Zhao]{Hongbo Zhao}
\address[H. Zhao]{School of Mathematical Sciences,  Key Laboratory of MEA(Ministry of Education) \& Shanghai Key Laboratory of PMMP,  East China Normal University, Shanghai 200241, China}
\email{2504245357@qq.com}
\date{\today}
\subjclass[2010]{28A80, 42C30, 60B10}
\begin{document}
	\keywords{empty}
	
	\maketitle
	\begin{abstract}
		In this paper, we construct a class of random measures $\mu^{\mathbf{n}}$ by infinite convolutions. Given infinitely many admissible pairs $\{(N_{k}, B_{k})\}_{k=1}^{\infty}$ and a  positive integral sequence  $\bn=\{n_{k}\}_{k=1}^{\infty}$, for every $\boldsymbol{\omega}\in \mathbb{N}^{\mathbb{N}}$, we write $\mu^{\mathbf{n}}(\boldsymbol{\omega}) =  \delta_{N_{\omega_{1}}^{-n_{1}}B_{\omega_{1}}} * \delta_{N_{\omega_{1}}^{-n_{1}}N_{\omega_{2}}^{-n_{2}}B_{\omega_{2}}} * \cdots$. If $n_{k}=1$ for $k\geq 1$, write $\mu(\boldsymbol{\omega})=\mu^{\mathbf{n}}(\boldsymbol{\omega})$.
		First, we show that the mapping $\mu^{\mathbf{n}}: (\boldsymbol{\omega}, B) \mapsto \mu^{\mathbf{n}}(\boldsymbol{\omega})(B)$ is a random measure if the family of Borel probability measures $\{\mu(\boldsymbol{\omega}) : \boldsymbol{\omega} \in \mathbb{N}^{\mathbb{N}}\}$  is tight. Then, for every Bernoulli measure $\mathbb{P}$ on $\mathbb{N}^{\mathbb{N}}$, the random measure $\mu^{\mathbf{n}}$ is also a spectral measure $\mathbb{P}$-a.e.. If the positive integral sequence $\bn$ is unbounded,  the random measure $\mu^{\mathbf{n}}$ is  a spectral measure  regardless of the measures  on the sequence space $\mathbb{N}^{\mathbb{N}}$. Moreover,  we provide some sufficient conditions for the existence of the random measure $\mu^\bn$. Finally, we verify that random measures have the intermediate-value property.
		

	\end{abstract}

	\section{Introduction}\label{section_introduction}
	\subsection{Spectral measures and fractals}
	A Borel probability measure $\mu$ on $\R^d$ is called a \emph{spectral measure} if there exists a set $\Lambda \subset \R^d$ such that the family of exponential functions
	$$E(\Lambda)= \big\{ e_\lambda(x) = e^{2\pi i \lambda \cdot x} : \lambda \in \Lambda \big\}$$
	forms an orthonormal basis for $L^2(\mu)$, where the set $\Lambda$ is called a \emph{spectrum} of $\mu$.
	In  Fourier analysis, the Lebesgue measure on the hypercube $[0,1]^d$ is a spectral measure with a spectrum $\Z^d$, and its support  exhibits a strong geometric structure. In 1974, Fuglede proposed the following well-known spectral set conjecture, see ~\cite{Fuglede-1974}.
	\begin{quote}
		\textbf{Conjecture}: \emph{Let $\Gamma \subset \R^d$ be a measurable set with positive finite Lebesgue measure. Then there exists a set $\Lambda \subset \R^d$ such that $\{ e_{\lambda}(x) = e^{2\pi i \lambda \cdot x}: \lambda \in \Lambda\}$ forms an orthogonal basis for $L^2(\Gamma)$,
			if and only if $\Gamma$ tiles $\R^d$ by translations.}
	\end{quote}
In 2004,  Tao \cite{Tao-2004} gave the first counterexample in $\R^d$ for $d \ge 5$.
	From then on, more counterexamples were  constructed in $\R^d$ for $d=2, 3$, see~\cite{Farkas-Revesz-2006, Farkas-Matolcsi-Mora-2006,Kolountzakis-Matolcsi-2006a,Kolountzakis-Matolcsi-2006b,Matolcsi-2005}.
	Recently, Nev and Matolcsi \cite{Lev-Matolcsi-2019} showed that the spectral set conjecture holds in all dimensions for convex domains.
	
	Fractal  measures are important research objects in fractal geometry which are frequently singular continuous  with respect to Lebesgue measures, and we refer readers to~\cite{Bk_KJF2} for the background reading. Such measures also have  many surprising phenomena in spectral theory.
	In 1998, Jorgensen and Pedersen \cite{Jorgensen-Pedersen-1998} discovered that the self-similar measure $\mu_{4,\{0,2\}}$ given by
	\begin{equation}\label{ssm14}
		\mu(\;\cdot\;) = \frac{1}{2} \mu(4 \;\cdot\;) + \frac{1}{2} \mu( 4 \;\cdot\; -2)\end{equation}
	is a spectral measure with a spectrum
	\begin{equation}\label{eq:Lambda-4-0-1}
		\Lambda = \bigcup_{k=1}^\f \big\{ \ell_1 + 4 \ell_2 + \cdots + 4^{k-1} \ell_k: \ell_1, \ell_2, \ldots, \ell_k \in \{0,1\} \big\},
	\end{equation}
see \cite{An-Fu-Lai-2019,An-He-He-2019,An-He-Lau-2015,Dai-He-Lau-2014, Fu-He-Wen-2018, He-Tang-Wu-2019, Li-2009, Liu-Dong-Li-2017}  for the study of various  fractal spectral measures.
	
In  \cite{Dutkay-Jorgensen-2012},  Dutkay and Jorgensen revealed an interesting fact  that besides the set $\Lambda$ defined in \eqref{eq:Lambda-4-0-1}, the sets $5\Lambda, 7\Lambda, 11\Lambda, 13\Lambda, 17\Lambda, \ldots$ are all spectra of $\mu_{4,\{0,2\}}$. This scaling property was first found by Laba and Wang for self-similar measure in \cite{Laba-Wang-2002},  and it has been extended to other fractal spectral measures. See \cite{Dutkay-Han-Sun-2009, Fu-He-2017,He-Tang-Wu-2019} for details.
	Therefore  the convergence of the Fourier series of functions $$\sum_{\lambda \in \Lambda}~ \la f, e_\lambda \ra_{L^2(\mu)}\;e_\lambda(x)$$
	may be very different for distinct spectra of singularly continuous spectral measures.
	For the fractal measure $\mu_{4,\{0,2\}}$ given by \eqref{ssm14}, Strichartz \cite{Strichartz-2006} proved that the mock Fourier series of continuous functions converges uniformly with respect to the spectrum $\Lambda$ given by \eqref{eq:Lambda-4-0-1}. However,  Dutkey, Han and Sun \cite{Dutkay-Han-Sun-2014} showed that there exists a continuous function such that its mock Fourier series is divergent at $0$ with respect to the spectrum $17\Lambda$.
	In addition, the spectra of some fractal measure may be very rich, Li and Wu showed in \cite{Li-Wu-2024} that for some spectral Moran  measures, the Beurling dimension of spectra has the intermediate value property.  These interesting results indicate that singular fractal spectral measures may have more complex geometric structure and more intricate analysis properties compared to absolutely continuous spectral measures   with respect to Lebesgue measures.
	
	\subsection{Infinite convolutions}
	A key strategy to study the spectra theory of fractal measures such as self-affine measure and Moran measure, is  by using infinite convolutions. Note that both self-affine measures and Moran measures may be regarded as generalizations of self-similar measures.
	
	Let $\delta_a$ be the Dirac measure concentrated on the point $a$. Given a finite subset $A \subset \R$.  We write
	$$
	\delta_A = \frac{1}{\# A} \sum_{a\in A} \delta_a,
	$$
	where $\#$ denotes the cardinality of a set.
	Let $\{ A_k\}_{k=1}^\f$ be a sequence of finite subsets of $\R$ such that $\# A_k \ge 2$ for every $k \ge 1$.
	For  each integer $k \geq 1$, we define
	\begin{equation}\label{discrete-convolution}
		\nu_k =\delta_{A_1}*\delta_{A_2} * \cdots *\delta_{A_k},
	\end{equation}
	where $*$ denotes the convolution between measures.
	If the sequence of convolutions $\{\nu_k\}_{k=1}^\f$ converges weakly to a Borel probability measure $\nu$, then we call $\nu$ the \emph{infinite convolution} of $\{{A_k}\}_{k=1}^\infty$, denoted by
	\begin{equation}\label{infinite-convolution}
		\nu =\delta_{A_1}*\delta_{A_2} * \cdots *\delta_{A_k} *\cdots.
	\end{equation}

	It is clear that the uniformly distributed self-affine measures and Moran measures may be regarded as special cases of infinite convolutions. Given a sequence  $\{(N_k,B_k)\}_{k=1}^\infty$ where $N_k\ge2$ and $B_k\subset\R$ is finite for all $k\in\N$. Then we write
	\begin{equation}\label{def_ick}
		\mu_{k} = \delta_{N_1^{-1} B_1} * \delta_{(N_1 N_2)^{-1} B_2} * \cdots * \delta_{(N_1N_2 \cdots N_k)^{-1} B_k}.
	\end{equation}
	If the sequence $\left\{\mu_k\right\}_{k=1}^\infty$ converges weakly to a Borel probability measure $\mu$, then we call $\mu$ the \emph{infinite convolution} of $\left\{(N_k,B_k)\right\}_{k=1}^\infty $, denoted by
	\begin{equation}\label{def_ic}
		\mu =\delta_{{N_{1}}^{-1}B_1}\ast\delta_{(N_1N_2)^{-1}B_2}\ast\dots\ast\delta_{(N_1N_{2}\cdots N_k)^{-1}B_k} *\cdots.
	\end{equation}			

Admissible pairs are the key to study the spectrality of infinite convolutions. Given an integer $N\ge2$ and a finite subset  $B\sse \Z$ with $\# B\ge 2$.  If there exists $L\sse \Z^d$ such that the matrix
	$$
	\left[ \frac{1}{\sqrt{\# B}} e^{-2 \pi i  \frac{b\cdot l}{N}}  \right]_{b \in B, l \in L} $$
	is unitary, we call $(N, B)$ an {\it admissible pair} in $\R$ and call $(N,B,L)$ a {\it Hadamard triple} in $\R$, see~\cite{Dutkay-Haussermann-Lai-2019} for details. 	Spectrality of infinite convolutions generated by a sequence of admissible pairs was first studied by Strichartz, where he constructed the spectrum under a specific uniform separation condition in \cite{Strichartz-2000}. But in general, the condition is difficult to verify.
	If the infinite convolution defined in \eqref{def_ic} exists, then it  is of pure type,  see \cite[Theorem~35]{Jessen-Wintner-1935} for detail.  If the elements of $\{(N_k,B_k)\}_{k=1}^\infty$ are identical, that is, $(N_k,B_k)=(N,B)$ for all integers $k>0$, the corresponding infinite convolution is a self-similar measure, denoted by $\mu_{N,B}$. {\L}aba and Wang \cite{Laba-Wang-2002} showed that if $(N,B)$ is an admissible pair, then the self-similar measure $\mu_{N,B}$ is a spectral measure, and Dutkay, Haussermann and Lai \cite{Dutkay-Haussermann-Lai-2019} generalized it to self-affine measures in higher dimensions.

	The admissible pairs are a crucial requirement in  spectral theory  of infinite convolutions since they provide an  infinite and mutually orthogonal set of exponential functions. Therefore, the difficulty to prove spectrality is to show the completeness of the orthogonal set for a given infinite convolution.  However, the admissible pairs are not enough to guarantee that the corresponding infinite convolution is a spectral measure (see Example 4.3 in \cite{An-He-He-2019}), even if the sequence of admissible pairs is chosen from a finite set of admissible pairs (see Example 1.8 in \cite{Dutkay-Lai-2017}). Nevertheless, it is widely believed that negative examples are very rare.
	An, Fu and Lai \cite{An-Fu-Lai-2019} introduced the concept of equi-positivity, and used the integral periodic zero set to define an admissible family, both of which have been extensively manipulated in analyzing the spectrality of infinite convolutions \cite{Li-Miao-Wang-2021,Li-Miao-Wang-2022,Liu-Lu-Zhou-2023b}.

	\subsection{Random measures and Main conclusions}
	In this paper, we apply infinite convolutions to construct a class of random measures, and we study the spectrality of such random measures. First, we recall the definition of random measures. Let $(\Omega, \mathcal{F})$ and $(E,\mathcal{E})$ be measurable spaces. A mapping $M: \Omega \times \mathcal{E} \to [0,+\infty]$ is called a \emph{random measure} on $(E,\mathcal{E})$ if $(\rmnum{1})$  the mapping $\boldsymbol{\omega} \mapsto M(\boldsymbol{\omega}, A)$ is $\mathcal{F}$-measurable for every $A\in\mathcal{E}$; and $(\rmnum{2})$ $A \mapsto M(\boldsymbol{\omega}, A)$ is a measure on $(E,\mathcal{E})$ for all $\boldsymbol{\omega}\in\Omega$. See \cite{Cinlar11} for details.

	For each $k=1,2,\cdots$, we write
	\begin{equation}\label{defsigmak}
		\Omega^k =\mathbb{N}^k=\{\alpha_{1}\alpha_{2}\cdots \alpha_{k}: \alpha_j\in\mathbb{N},\ j=1,2,\ldots, k\}
	\end{equation}
	for the set of sequences of length $k$, with $
	\Omega^{0}=\{\emptyset \}$ containing only the empty sequence $\emptyset$, and write
	\begin{equation}\label{defsigma*}
		\Omega^{*}=\bigcup _{k=0}^{\infty }\Omega^{k}
	\end{equation}
	for the set of all finite sequences. We write
	$$
	\Omega=\mathbb{N}^{\mathbb{N}}= \{\alpha_{1}\alpha_{2}\cdots \alpha_{j}\cdots :  \alpha_j \in\mathbb{N}\}
	$$
	for the corresponding set of infinite sequences.
	For $\bu=\alpha_1\cdots \alpha_k\in\Omega^k$, we write $\bu^-=\alpha_1\cdots \alpha_{k-1}$ and  write $|\bu|=k$ for the length of $\bu$. For each $\bu = \alpha_1 \alpha_2\cdots \alpha_k\in
	\Omega^{*}$, and $\bv = \beta_1 \beta_2\cdots \in \Omega$,   we say $\bu$ is a \textit{curtailment or prefix} of $\bv$, denoted by  $\bu \preceq \bv$, if $\bu =\bv|k= \beta_1\cdots \beta_k $. We call the set $[\bu] =
	\{\bv\in\Omega: \bu \preceq\bv\}$ the \textit{cylinder} of $\bu$. If $\bu=\emptyset$, its cylinder is $[\bu]=\Omega$.  We term a subset $A$ of $\Omega^*$ a	\textit{cut set} if $\Omega\subset\bigcup_{\bu\in A}[\bu]$, where $[\bu]\bigcap [\bv]=\emptyset$ for  all $\bu\neq\bv\in A$.
If	$\bu,\bv\in \Omega$, then we denote by $\bu\wedge\bv \in \Omega^*$ the
	maximal common initial subsequence of both $\bu$ and $\bv$.

We topologise $\Omega$  using the metric $d(\bu,\bv) = 2^{-|\bu \wedge \bv|}$ for all $\bu, \bv \in \Omega$ to make $\Omega$  into a complete metric space, see \cite{BBT08} for details.  Let $\mathcal{F}$ be the Borel $\sigma$-algebra  on $\Omega$. Then $(\Omega, \mathcal{F})$ is a measurable space. We write $\mathcal{P}(\Omega)$ for the set of all Borel probability measures on $\Omega$.

	Given a probability vector $\mathbf{p}=(p_{1}, p_{2}, \dots)$, i.e., $\sum_{j=1}^{\infty} p_j=1$ where $p_j\geq 0$ for all $j >0$,  we define a probability measure $\mathbb{P}$ on $\Omega$ by setting
	\begin{equation} \label{def_BPM}
		\mathbb{P}([\bu]) =p_{\bu}\equiv p_{\alpha_{1}}p_{\alpha_{2}}\cdots p_{\alpha_{k}}  \quad (\bu=\alpha_1\cdots \alpha_k)
	\end{equation}
	for each cylinder $[\bu]$ and
	extending to general subsets of $\Omega$ in the usual way. The probability measure $\mathbb{P}$ is called the {\it Bernoulli measure} associated with the probability vector $\mathbf{p}$. A probability vector $\mathbf{p}=(p_{1}, p_{2}, \dots)$ is called {\it positive} if $p_{j}>0$ for all $j\geq 1$.

We define random measures on $(\mathbb{R},\mathcal{B}(\mathbb{R}))$ with respect to $(\Omega, \mathcal{F})$ by  infinite convolutions. Given a sequence $\{(N_k,B_k)\}_{k=1}^\f$ where  $N_k\ge2$ and $B_k\subset \R $ is finite for all $k>0$. In this paper, we always write  $\bn=\{ n_{k}\}_{k=1}^\f$ for a sequence of positive integers. For each integer $k>0$, we write that
	\begin{equation} \label{def_mubk}
		\mu^{\bn}_{k}(\boldsymbol{\omega}) =\delta_{N_{\omega_{1}}^{-n_{1}}B_{\omega_{1}}} * \delta_{N_{\omega_{1}}^{-n_{1}}N_{\omega_{2}}^{-n_{2}}B_{\omega_{2}}} *  \dots * \delta_{N_{\omega_{1}}^{-n_{1}}N_{\omega_{2}}^{-n_{2}}\cdots N_{\omega_{k}}^{-n_{k}}B_{\omega_{k}}},
	\end{equation}
	for all $\boldsymbol{\omega}\in \Omega$, and this defines a mapping $	\mu_{k}^{\mathbf{n}} : \Omega\times \mathcal{B}(\mathbb{R}) \to [0,+\f]$  by $	\mu_{k}^{\mathbf{n}}(\boldsymbol{\omega}, B) = \mu_{k}^{\mathbf{n}}(\boldsymbol{\omega})(B)$ for all  $\boldsymbol{\omega} \in \Omega$ and $B\in\mathcal{B}(\mathbb{R})$. The following conclusion shows that    $\mu^{\bn}_{k}$ is a random measure.
\begin{theorem} \label{thm_measurable'}
		Given a sequence $\{( N_k,B_k)\}_{k=1}^\infty $ where  $N_k\ge2$ and $B_k\subset \R $ is finite for all $k>0$. Then for every  sequence $\mathbf{n}$ of positive integers,  the mapping $\mu_{k}^{\mathbf{n}}$ given by \eqref{def_mubk} is  a random measure for all $k>0$.
	\end{theorem}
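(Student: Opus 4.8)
The plan is to verify directly the two defining properties of a random measure. Property $(\rmnum{2})$, that $A \mapsto \mu_{k}^{\bn}(\boldsymbol{\omega}, A)$ is a measure on $(\R, \mathcal{B}(\R))$ for each fixed $\boldsymbol{\omega}\in\Omega$, is immediate: by \eqref{def_mubk} the measure $\mu_{k}^{\bn}(\boldsymbol{\omega})$ is a finite convolution of the discrete measures $\delta_{A}$, each of which is a Borel probability measure on $\R$, and a finite convolution of Borel probability measures is again a Borel probability measure (indeed one supported on a finite set). Hence the whole content of the statement lies in property $(\rmnum{1})$, namely the $\mathcal{F}$-measurability of the map $\boldsymbol{\omega} \mapsto \mu_{k}^{\bn}(\boldsymbol{\omega}, A)$ for each fixed $A \in \mathcal{B}(\R)$.

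The key observation I would exploit is that, for fixed $k$, the measure $\mu_{k}^{\bn}(\boldsymbol{\omega})$ defined in \eqref{def_mubk} depends only on the first $k$ coordinates $\omega_{1}, \ldots, \omega_{k}$ of $\boldsymbol{\omega}$. Consequently the function $\boldsymbol{\omega} \mapsto \mu_{k}^{\bn}(\boldsymbol{\omega}, A)$ is constant on each cylinder $[\bu]$ with $\bu \in \Omega^{k} = \N^{k}$: on $[\bu]$ it takes the common value $\mu_{k}^{\bn}(\boldsymbol{\omega})(A)$ for any $\boldsymbol{\omega} \in [\bu]$, which I would denote $\mu_{k}^{\bn}(\bu)(A)$. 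Since every infinite sequence has a unique length-$k$ prefix $\boldsymbol{\omega}|k \in \Omega^{k}$, the family $\{[\bu] : \bu \in \Omega^{k}\}$ is a \emph{countable} partition of $\Omega$ into cylinders, and each cylinder is clopen in the metric topology (if $\boldsymbol{\omega}\in[\bu]$ and $d(\boldsymbol{\omega},\boldsymbol{\omega}')<2^{-k}$ then $\boldsymbol{\omega}'\in[\bu]$), hence belongs to the Borel $\sigma$-algebra $\mathcal{F}$.

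From here measurability is routine: for any Borel set $E \sse [0, +\f]$ one has
\[
\set{\boldsymbol{\omega} \in \Omega : \mu_{k}^{\bn}(\boldsymbol{\omega}, A) \in E} = \bigcup_{\substack{\bu \in \Omega^{k} \\ \mu_{k}^{\bn}(\bu)(A) \in E}} [\bu],
\]
a countable union of members of $\mathcal{F}$, which therefore lies in $\mathcal{F}$. This establishes $(\rmnum{1})$ and completes the argument. I do not anticipate a genuine obstacle here; the only point requiring care is to record precisely that $\mu_{k}^{\bn}(\boldsymbol{\omega})$ factors through the finite prefix $\boldsymbol{\omega}|k$, after which the countability of $\Omega^{k} = \N^{k}$ reduces the measurability question to the elementary fact that a function constant on each block of a countable Borel partition is measurable.
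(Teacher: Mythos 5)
Your proof is correct, but it takes a genuinely different and more elementary route than the paper. The paper disposes of Theorem \ref{thm_measurable'} by remarking that the argument is ``similar to the proof of Theorem \ref{thm_measurable} but simpler'': its intended proof factors $\boldsymbol{\omega} \mapsto \mu_{k}^{\bn}(\boldsymbol{\omega}, B)$ as $\pi_{B}\circ\phi_{k}$, where $\phi_{k}:\Omega\to(\mathcal{P}(\R),\mathcal{T}_{w})$ is continuous (trivially so, for exactly the reason you identify --- $\mu_{k}^{\bn}(\boldsymbol{\omega})$ depends only on $\omega_{1},\ldots,\omega_{k}$, so $\phi_{k}$ is locally constant) and $\pi_{B}:\eta\mapsto\eta(B)$ is Borel on the weak topology. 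The Borel measurability of $\pi_{B}$ is the heavy part of the paper's machinery: it rests on Theorem \ref{thm_chi_B_Baire_function} (characteristic functions of Borel sets are Baire functions) and the transfinite induction in Proposition \ref{prop_Borel_measurable_mapping}. You bypass all of this by pushing the local-constancy observation one step further: since $\mu_{k}^{\bn}(\cdot, A)$ is constant on each cylinder $[\bu]$, $\bu\in\Omega^{k}=\N^{k}$, and these cylinders form a countable clopen partition of $\Omega$, the preimage of any Borel subset of $[0,+\f]$ is a countable union of cylinders, hence Borel. This is a complete and self-contained proof of the finite-level statement, and arguably the ``right'' one for it. What the paper's heavier route buys is uniformity: the evaluation-map lemma is developed because it is indispensable for Theorem \ref{thm_measurable}, where the limit measure $\mu^{\bn}(\boldsymbol{\omega})$ depends on \emph{all} coordinates, local constancy fails, and one cannot simply pass to the limit in your argument (weak convergence of $\mu_{k}^{\bn}(\boldsymbol{\omega})$ to $\mu^{\bn}(\boldsymbol{\omega})$ does not give convergence of the measures of an arbitrary Borel set $A$, only of continuity sets). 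So your approach is strictly better for Theorem \ref{thm_measurable'} in isolation, but it does not extend to the limit theorem, which is why the paper routes both results through the same weak-topology machinery.
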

	
It is more important to explore the limit behavior of  $\mu_{k}^{\mathbf{n}}$ which is the main object studied in this paper. Suppose that for every $\boldsymbol{\omega}\in \Omega$, the sequence $\{\mu_k^{\mathbf{n}}(\boldsymbol{\omega})\}_{k=1}^\infty$ converges weakly to $\mu^{\mathbf{n}}(\boldsymbol{\omega})$, written as
	\begin{equation} \label{def_mubn}
		\mu^{\mathbf{n}}(\boldsymbol{\omega}) = \delta_{N_{\omega_{1}}^{-n_{1}}B_{\omega_{1}}} * \delta_{N_{\omega_{1}}^{-n_{1}}N_{\omega_{2}}^{-n_{2}}B_{\omega_{2}}} * \cdots .
	\end{equation}
	We define a mapping $\mu^{\mathbf{n}}: \Omega\times \mathcal{B}(\mathbb{R}) \to [0,+\f]$ by
	\begin{equation} \label{def_random_measure}
		\mu^{\mathbf{n}}(\boldsymbol{\omega}, B)= \mu^{\mathbf{n}}(\boldsymbol{\omega})(B),
	\end{equation}
	for all $\boldsymbol{\omega} \in \Omega$ and all   $B \in \mathcal{B}(\mathbb{R})$.
	For simplicity,  if $\bn=\{ 1 \}_{k=1}^\f$, we write
	\begin{equation} \label{def_corresponding_P_of_random_measure}
		\mu(\boldsymbol{\omega}) =		\mu^\bn(\boldsymbol{\omega}) = \delta_{N_{\omega_{1}}B_{\omega_{1}}} * \delta_{N_{\omega_{1}}N_{\omega_{2}}B_{\omega_{2}}} * \cdots,
	\end{equation}
	for all $\boldsymbol{\omega}\in\Omega$.
	
	We assume that $\mu(\boldsymbol{\omega})$ exists for every $\boldsymbol{\omega}\in\Omega$ and write
	\begin{equation} \label{def_setPhi}
		\Phi = \{ \mu(\boldsymbol{\omega}): \boldsymbol{\omega} \in \Omega \}.
	\end{equation}
	It turns out that the mapping $\mu^{\mathbf{n}}$ is a random measure under the tightness assumption. See Section \ref{section_preliminaries} for the definition of tightness.
\begin{theorem} \label{thm_measurable}
		Given a sequence $\{( N_k,B_k)\}_{k=1}^\infty $ where $N_k\ge2$ and $B_k\subseteq [0,+\f)$ is finite for all $k>0$. Suppose that $\Phi$ given by \eqref{def_setPhi} is tight. Then for every  sequence $\mathbf{n}$ of positive integers,  the mapping $\mu^{\mathbf{n}}$ given by \eqref{def_random_measure} is a random measure.
	\end{theorem}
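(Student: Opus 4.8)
The plan is to verify directly the two defining properties of a random measure recalled in the excerpt: that $A \mapsto \mu^{\mathbf{n}}(\boldsymbol{\omega}, A)$ is a measure on $(\R,\mathcal{B}(\R))$ for each fixed $\boldsymbol{\omega}$, and that $\boldsymbol{\omega} \mapsto \mu^{\mathbf{n}}(\boldsymbol{\omega}, A)$ is $\mathcal{F}$-measurable for each fixed $A \in \mathcal{B}(\R)$. The first property is essentially free once the limit is known to lose no mass: for each $\boldsymbol{\omega}$ the measure $\mu^{\mathbf{n}}(\boldsymbol{\omega})$ is a weak limit of the Borel probability measures $\mu_k^{\mathbf{n}}(\boldsymbol{\omega})$, hence a Borel measure. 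To see it is genuinely a probability measure I would exploit the hypothesis $B_k \sse [0,+\f)$: realising $\mu_k^{\mathbf{n}}(\boldsymbol{\omega})$ as the law of a sum of independent nonnegative terms and coupling it, via the same digit choices, with the corresponding sum for $\mu(\boldsymbol{\omega})$, the term-by-term bound $N_{\omega_1}^{-n_1}\cdots N_{\omega_j}^{-n_j} \le N_{\omega_1}^{-1}\cdots N_{\omega_j}^{-1}$ gives stochastic domination of $\mu_k^{\mathbf{n}}(\boldsymbol{\omega})$ by $\mu(\boldsymbol{\omega})$. Thus the tightness of $\Phi$ from \eqref{def_setPhi} transfers to the family $\{\mu_k^{\mathbf{n}}(\boldsymbol{\omega})\}$, so mass does not escape and condition (ii) holds.

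For the measurability in $\boldsymbol{\omega}$, I would first reduce to integrals against test functions. By Theorem~\ref{thm_measurable'} each $\mu_k^{\mathbf{n}}$ is a random measure, so $\boldsymbol{\omega} \mapsto \int f \D\mu_k^{\mathbf{n}}(\boldsymbol{\omega})$ is $\mathcal{F}$-measurable for every bounded measurable $f$ (proved in the usual way: for indicators, then simple functions, then monotone limits). Since $\mu_k^{\mathbf{n}}(\boldsymbol{\omega}) \to \mu^{\mathbf{n}}(\boldsymbol{\omega})$ weakly, for every bounded continuous $f$ one has $\int f \D\mu^{\mathbf{n}}(\boldsymbol{\omega}) = \lim_{k\to\f}\int f \D\mu_k^{\mathbf{n}}(\boldsymbol{\omega})$, a pointwise limit of $\mathcal{F}$-measurable functions, hence $\mathcal{F}$-measurable.

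The remaining step is to pass from continuous test functions to the set functions $A \mapsto \mu^{\mathbf{n}}(\boldsymbol{\omega}, A)$, and here I would run a Dynkin ($\pi$-$\lambda$) argument. For an open set $U$ I choose bounded continuous $f_m \uparrow \mathbf{1}_U$; by monotone convergence $\mu^{\mathbf{n}}(\boldsymbol{\omega}, U) = \lim_m \int f_m \D\mu^{\mathbf{n}}(\boldsymbol{\omega})$ is measurable in $\boldsymbol{\omega}$. The collection $\mathcal{D} = \{A \in \mathcal{B}(\R): \boldsymbol{\omega}\mapsto \mu^{\mathbf{n}}(\boldsymbol{\omega}, A)\text{ is }\mathcal{F}\text{-measurable}\}$ is then a Dynkin system: it contains $\R$ because $\mu^{\mathbf{n}}(\boldsymbol{\omega},\R)=1$, it is closed under complements via $\mu^{\mathbf{n}}(\boldsymbol{\omega}, A^c) = 1 - \mu^{\mathbf{n}}(\boldsymbol{\omega}, A)$, and it is closed under countable disjoint unions by countable additivity of each $\mu^{\mathbf{n}}(\boldsymbol{\omega})$ together with the measurability of countable sums of measurable functions. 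Since $\mathcal{D}$ contains the $\pi$-system of open sets, the $\pi$-$\lambda$ theorem yields $\mathcal{D} = \mathcal{B}(\R)$, which is condition (i).

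The main obstacle I anticipate is precisely this last transfer. Weak convergence controls only $\int f \D\mu^{\mathbf{n}}(\boldsymbol{\omega})$ for continuous $f$, and the Portmanteau theorem gives mere inequalities for indicators of open or closed sets, so one cannot substitute $f=\mathbf{1}_A$ directly. The monotone approximation of open sets by continuous functions, the closure of $\mathcal{D}$ under complements (which crucially requires each limit to be a \emph{probability} measure, and hence uses the tightness of $\Phi$ together with the nonnegativity $B_k\sse[0,+\f)$), and the $\pi$-$\lambda$ theorem are the ingredients that make the passage from test functions to arbitrary Borel sets rigorous.
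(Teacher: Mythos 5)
Your proof is correct, but it takes a genuinely different route from the paper's. The paper factors the map as $\mu^{\mathbf{n}}(\cdot,B)=\pi_{B}\circ\phi$, where $\phi(\boldsymbol{\omega})=\mu^{\mathbf{n}}(\boldsymbol{\omega})$: it first proves that $\phi$ is \emph{continuous} from $(\Omega,d)$ to $(\mathcal{P}(\mathbb{R}),\mathcal{T}_{w})$ (Lemma~\ref{lemma_continuous_mapping_from_symbol_space_to_Prob_space}, a Fourier-transform estimate, and this is where the tightness of $\Phi$ is really consumed), and then shows that each evaluation map $\pi_{B}:\eta\mapsto\eta(B)$ is Borel with respect to $\sigma(\mathcal{T}_{w})$ by transfinite induction through the Baire classes, combined with Theorem~\ref{thm_chi_B_Baire_function} (Proposition~\ref{prop_Borel_measurable_mapping}). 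You instead exploit the finite-level random measures $\mu_{k}^{\mathbf{n}}$ of Theorem~\ref{thm_measurable'}: pointwise weak convergence makes $\boldsymbol{\omega}\mapsto\int f\,\mathrm{d}\mu^{\mathbf{n}}(\boldsymbol{\omega})$ a pointwise limit of $\mathcal{F}$-measurable functions for each $f\in C_{b}(\mathbb{R})$, and then monotone approximation of $\chi_{U}$ for open $U$ plus a $\pi$-$\lambda$ argument applied directly to $\mathcal{D}=\{A\in\mathcal{B}(\mathbb{R}):\boldsymbol{\omega}\mapsto\mu^{\mathbf{n}}(\boldsymbol{\omega},A)\ \text{is measurable}\}$ finishes the job; both proofs invoke Dynkin's theorem, but at different places (the paper hides it inside Theorem~\ref{thm_chi_B_Baire_function}). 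Your route is more elementary --- no ordinals, no Baire hierarchy, no continuity lemma --- and it actually needs less than the theorem assumes: for the measurability step you only use that each $\mu^{\mathbf{n}}(\boldsymbol{\omega})$ exists as a weak limit in $\mathcal{P}(\mathbb{R})$, not the uniform tightness of $\Phi$. Indeed, your appeal to tightness in the first paragraph is really a re-proof of existence parallel to the paper's Proposition~\ref{prop_tightness_guarantees_existence}; note there that tightness of the approximating family alone gives only subsequential limits by Prokhorov, so to get convergence of the whole sequence you should finish your coupling argument with the almost sure monotone convergence of the nonnegative partial sums (or simply cite Proposition~\ref{prop_tightness_guarantees_existence}), and for a fixed $\boldsymbol{\omega}$ the tightness of the single measure $\mu(\boldsymbol{\omega})$ already suffices. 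What the paper's heavier machinery buys is reusable intermediate results: the continuity of $\phi$ is invoked again in the proof of Theorem~\ref{thm_TC} to identify weak limits of tail measures along subsequences, and the Borel measurability of the evaluation maps $\pi_{B}$ on the weak topology is a general fact of independent interest, neither of which your argument produces.
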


	Note that in the assumption that  $\Phi$ is tight, we assume  $\mu(\boldsymbol{\omega})$ exists first for every $\boldsymbol{\omega} \in \Omega$. By Proposition~\ref{prop_tightness_guarantees_existence}, the existence of  $\mu(\boldsymbol{\omega})$ implies that   $\mu^{\mathbf{n}}(\boldsymbol{\omega})$ exists for all positive integral  sequence $\mathbf{n}$. Hence the mapping  $\mu^{\mathbf{n}}$ in Theorem \ref{thm_measurable} is well-defined. The existence and tightness of $\Phi$  are  explored in Section \ref{sec_ex}, and  some sufficient conditions are provided later.

	Our main purpose is to study the spectrality of random measures.
Let  $M$ be a random measure on  $(E,\mathcal{E})$ with respect to  $(\Omega, \mathcal{F})$. We say $M$ is a \emph{spectral random measure} if $M(\boldsymbol{\omega})$ is a spectral measure for all $\boldsymbol{\omega}\in\Omega$. Moreover, given a Borel probability measure $\mathbb{P} \in \mathcal{P}(\Omega)$, we say $M$ is \emph{a spectral random for $\mathbb{P}$-a.e. $\omega\in \Omega$  (or  the random measure $M$ is spectral $\mathbb{P}$-a.e.)} if $M(\boldsymbol{\omega})$ is a spectral measure for $\mathbb{P}$-a.e. $\boldsymbol{\omega}\in\Omega$.
	
	If the sequence $\bn= \{n_{k}\}_{k=1}^\f $ is unbounded, we obtain that $\mu^{\bn}$ is a spectral random measure with assumption of admissible pairs.
	\begin{theorem}\label{thm_ub}
		Given a sequence $\{( N_k,B_k)\}_{k=1}^\infty $ of admissible pairs where $B_k \subseteq \mathbb{N}$ for all $k>0$, suppose that $\Phi$ given by \eqref{def_setPhi} is tight. Then for every unbounded sequence $\mathbf{n}$ of positive integers,  the mapping $\mu^{\bn}$ given by \eqref{def_random_measure} is a  spectral random measure.
	\end{theorem}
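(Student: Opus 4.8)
The plan is to prove, for each fixed $\boldsymbol{\omega}\in\Omega$ separately, that $\mu^{\bn}(\boldsymbol{\omega})$ is spectral, keeping every quantitative estimate uniform in $\boldsymbol{\omega}$ so that the conclusion holds for \emph{every} $\boldsymbol{\omega}$. Since each $(N_k,B_k)$ is admissible, I first fix for each $k$ a set $L_k\subseteq\{0,1,\dots,N_k-1\}$ with $0\in L_k$ making $(N_k,B_k,L_k)$ a Hadamard triple. Writing $Q_j=N_{\omega_1}^{n_1}\cdots N_{\omega_j}^{n_j}$, the finite convolution $\nu_k=\mu^{\bn}_k(\boldsymbol{\omega})$ has transform $\prod_{j=1}^k\widehat{\delta_{B_{\omega_j}}}(\,\cdot\,/Q_j)$ and is spectral with the sumset spectrum $\Lambda_k=\{\sum_{j=1}^k Q_{j-1}N_{\omega_j}^{n_j-1}l_j:l_j\in L_{\omega_j}\}$, this being the standard spectrum for a finite convolution of Hadamard components. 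Because $0\in L_{\omega_j}$, the sets $\Lambda_k$ are nested, so $\Lambda=\bigcup_k\Lambda_k$ is well defined, and the Hadamard property makes $E(\Lambda)$ orthonormal in $L^2(\mu^{\bn}(\boldsymbol{\omega}))$. By the Jorgensen--Pedersen criterion it then remains only to prove completeness, i.e. that $q_\Lambda(\xi):=\sum_{\lambda\in\Lambda}|\widehat{\mu^{\bn}(\boldsymbol{\omega})}(\xi+\lambda)|^2=1$ for every $\xi$; orthonormality already gives $q_\Lambda\le1$.

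For completeness I would factor $\widehat{\mu^{\bn}(\boldsymbol{\omega})}=\widehat{\nu_k}\cdot\widehat{\tau_k}$, where $\tau_k$ is the tail convolution from level $k+1$ on, so $\widehat{\tau_k}(\eta)=\prod_{j>k}\widehat{\delta_{B_{\omega_j}}}(\eta/Q_j)$. Since $\Lambda_k\subseteq\Lambda$ and $\sum_{\lambda\in\Lambda_k}|\widehat{\nu_k}(\xi+\lambda)|^2=1$,
\[
q_\Lambda(\xi)\ \ge\ \sum_{\lambda\in\Lambda_k}|\widehat{\nu_k}(\xi+\lambda)|^2\,|\widehat{\tau_k}(\xi+\lambda)|^2\ \ge\ \inf_{\lambda\in\Lambda_k}|\widehat{\tau_k}(\xi+\lambda)|^2 .
\]
Thus it suffices to find levels $k$ along which $\inf_{\lambda\in\Lambda_k}|\widehat{\tau_k}(\xi+\lambda)|^2\to1$. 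Using $L_{\omega_j}\subseteq\{0,\dots,N_{\omega_j}-1\}$ and $Q_j/Q_{j-1}\ge2$ one checks $|\lambda|\le 2Q_k$ for $\lambda\in\Lambda_k$, so for fixed $\xi$ and large $k$ every relevant frequency satisfies $|\xi+\lambda|\le 3Q_k$.

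The crux, and the place where unboundedness of $\bn$ enters, is the elementary bound $1-|\widehat{\delta_B}(\zeta)|^2\le 4\pi^2\zeta^2\,\mathrm{Var}(\delta_B)$, valid for all $\zeta$. Applying it termwise gives
\[
1-|\widehat{\tau_k}(\eta)|^2\ \le\ 4\pi^2\,\eta^2\sum_{j>k}\frac{\mathrm{Var}(\delta_{B_{\omega_j}})}{Q_j^2},
\]
and with $|\eta|\le 3Q_k$ each summand is controlled by $\mathrm{Var}(\delta_{B_{\omega_j}})(Q_k/Q_j)^2$, where $Q_k/Q_{k+1}=N_{\omega_{k+1}}^{-n_{k+1}}$. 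The quantitative form of tightness established in Section~\ref{sec_ex} supplies the uniform control on the digit sets needed here (e.g. a bound of the form $\mathrm{Var}(\delta_{B_m})\lesssim N_m^2$), whence the entire tail sum is dominated by a uniform multiple of $N_{\omega_{k+1}}^{-2(n_{k+1}-1)}\le 2^{-2(n_{k+1}-1)}$. Since $\bn$ is unbounded, I choose $k_j\to\infty$ with $n_{k_j+1}\to\infty$; then $\inf_{\lambda\in\Lambda_{k_j}}|\widehat{\tau_{k_j}}(\xi+\lambda)|^2\to1$, forcing $q_\Lambda(\xi)\ge1$ and hence $q_\Lambda\equiv1$. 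As the constants and the rate depend only on $\bn$ and on $\{(N_k,B_k)\}$, not on $\boldsymbol{\omega}$, the argument applies simultaneously to every $\boldsymbol{\omega}\in\Omega$, so $\mu^{\bn}$ is a spectral random measure.

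The main obstacle I anticipate is precisely the uniform tail estimate above: the spectrum $\Lambda_k$ reaches frequencies of size comparable to $Q_k$, while $\tau_k$ lives at the far finer scale $Q_{k+1}^{-1}$, and one must show the mismatch $Q_k/Q_{k+1}=N_{\omega_{k+1}}^{-n_{k+1}}$ genuinely drives $|\widehat{\tau_k}|$ to $1$ uniformly over all $\lambda\in\Lambda_k$ and, crucially, over all $\boldsymbol{\omega}$. This is delicate exactly because $B_k\subseteq\mathbb{N}$ may be unbounded; the role of tightness is to convert ``$n_{k+1}$ large'' into genuine, $\boldsymbol{\omega}$-independent smallness by bounding $\mathrm{Var}(\delta_{B_{\omega_j}})$ relative to $N_{\omega_j}^2$. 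Verifying that the nested $\Lambda_k$ really is a spectrum for $\nu_k$ (the Hadamard sumset step underpinning $\sum_{\lambda\in\Lambda_k}|\widehat{\nu_k}(\xi+\lambda)|^2=1$) is routine but should be stated carefully, as that identity is used throughout.
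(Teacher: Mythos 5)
Your route is genuinely different from the paper's: you construct the sumset spectrum $\Lambda=\bigcup_k\Lambda_k$ explicitly and verify completeness through the Jorgensen--Pedersen identity, whereas the paper shows that the rescaled tails $\nu^{\bn}_{>k}(\boldsymbol{\omega})$ converge weakly to $\delta_0$ and then invokes Theorem~\ref{thm_ipzs} with $\mathcal{Z}(\delta_0)=\emptyset$. The skeleton of your argument (orthogonality of $E(\Lambda)$, the bound $q_\Lambda(\xi)\ge\inf_{\lambda\in\Lambda_k}|\widehat{\tau_k}(\xi+\lambda)|^2$, the reduction to frequencies $|\eta|\le 3Q_k$) is sound. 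The genuine gap is exactly the step you flagged as the crux: tightness of $\Phi$ does \emph{not} supply a bound of the form $\mathrm{Var}(\delta_{B_m})\lesssim N_m^2$. Tightness is a statement about the infinite convolutions $\mu(\boldsymbol{\omega})$, not about moments of individual digit sets, and under the hypotheses of Theorem~\ref{thm_ub} your claimed bound is false: in the paper's Example~\ref{ex_tnc} one has $N_k=4^{2^{k-1}}$, $\#B_k=\sqrt{N_k}$, and $\max B_k=N_k^2-1$, so that
\[
\mathrm{Var}(\delta_{B_k})\ \gtrsim\ \frac{(\max B_k)^2}{\#B_k}\ \asymp\ N_k^{7/2}\ \gg\ N_k^2,
\]
yet $\Phi$ is tight by Theorem~\ref{thm_tight}, so all hypotheses of Theorem~\ref{thm_ub} hold there. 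Worse, measures in $\Phi$ need not have finite second moments at all (supports can be unbounded, as in that example), and since the alphabet is infinite the tail sum $\sum_{j>k}\mathrm{Var}(\delta_{B_{\omega_j}})/Q_j^2$ can diverge, making the termwise variance estimate vacuous rather than merely non-uniform. As written, your argument proves part $(\rmnum{3})$ of Corollary~\ref{thm_CC}, where assumption \eqref{cdn_NBIF} does give $\max B_k\lesssim N_k$ and hence the variance bound, but it does not prove Theorem~\ref{thm_ub}.

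The repair is to replace the second-moment bound by the concentration estimate that tightness actually provides, which is the computation at the heart of the paper's proof. Every one-step-rescaled tail $\nu_{>k}(\boldsymbol{\omega})$ of \eqref{def_nu1geqk} belongs to $\Phi$, so there is a single $M$ with $\nu_{>k}(\boldsymbol{\omega})([-M,M])>1-\epsilon$ for all $k$ and all $\boldsymbol{\omega}$; comparing $\nu^{\bn}_{>k}$ with $\nu_{>k}$ digit by digit shows that $\nu^{\bn}_{>k}(\boldsymbol{\omega})$ has mass $>1-\epsilon$ on $N_{\omega_{k+1}}^{1-n_{k+1}}[-M,M]\subseteq 2^{1-n_{k+1}}[-M,M]$, and splitting the integral defining the Fourier transform over this set and its complement gives
\[
\bigl|\widehat{\nu^{\bn}_{>k}(\boldsymbol{\omega})}(\zeta)-1\bigr|\ \le\ 2\pi|\zeta|\,M\,2^{1-n_{k+1}}+2\epsilon,
\]
uniformly in $\boldsymbol{\omega}$ and in $|\zeta|\le 3$. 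Since $\widehat{\tau_k}(\eta)=\widehat{\nu^{\bn}_{>k}(\boldsymbol{\omega})}(\eta/Q_k)$, this is precisely the input your completeness step needs at $|\eta|\le 3Q_k$: choosing $k_j$ with $n_{k_j+1}\to\infty$ forces $\inf_{\lambda\in\Lambda_{k_j}}|\widehat{\tau_{k_j}}(\xi+\lambda)|\to 1$, and the rest of your argument goes through unchanged. With this substitution your proof becomes a correct and more self-contained alternative to the paper's, which obtains completeness by quoting the integral-periodic-zero-set machinery of Theorem~\ref{thm_ipzs} instead of exhibiting the spectrum.
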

	
	If we remove the unboundedness of $\bn$, we conclude that  $\mu^{\mathbf{n}}$ is  a spectral random measure  almost surly.
	\begin{theorem}\label{thm_TC}
		Given a sequence $\{( N_k,B_k)\}_{k=1}^\infty $ of admissible pairs where $0\in B_k \subseteq \mathbb{N}$ for all $k>0$,   suppose that $\Phi$ given by \eqref{def_setPhi} is tight. Then for every  Bernoulli probability $\mathbb{P}$ on $\Omega$  and  every sequence $\mathbf{n}$ of positive integers, the mapping $\mu^{\bn}$ given by \eqref{def_random_measure} is a  spectral random measure for  $\mathbb{P}$-a.e. $\boldsymbol{\omega}\in\Omega$.
	\end{theorem}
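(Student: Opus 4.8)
The random measure property of $\mu^{\bn}$ is already furnished by Theorem~\ref{thm_measurable}, and tightness of $\Phi$ together with Proposition~\ref{prop_tightness_guarantees_existence} guarantees that $\mu^{\bn}(\boldsymbol{\omega})$ exists for every $\boldsymbol{\omega}\in\Omega$; thus the only new content is that $\mu^{\bn}(\boldsymbol{\omega})$ is a spectral measure for $\PP$-a.e. $\boldsymbol{\omega}$. The plan is to exploit the Hadamard triple structure to produce, for each $\boldsymbol{\omega}$, a canonical orthonormal family of exponentials, and then to establish its completeness for almost every $\boldsymbol{\omega}$ by a recurrence argument. Writing $\tilde N_j=N_{\omega_j}^{n_j}$, I would first check that $\mu^{\bn}(\boldsymbol{\omega})$ is precisely the infinite convolution generated by the admissible pairs $\{(\tilde N_j,B_{\omega_j})\}_{j\ge1}$ in the sense of \eqref{def_ic}, since raising the modulus of a Hadamard triple $(N,B,L)$ to a power keeps it admissible (replace $L$ by $N^{n-1}L$). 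This reduces the claim to the spectrality of an ordinary infinite convolution of admissible pairs, to which the equi-positivity machinery of An, Fu and Lai \cite{An-Fu-Lai-2019} (and its refinements in \cite{Li-Miao-Wang-2021,Li-Miao-Wang-2022,Liu-Lu-Zhou-2023b}) applies: admissibility yields the orthonormal set, and completeness follows once some increasing sequence of the tail measures $\mu^{\bn}_{>k}(\boldsymbol{\omega})$ is shown to be equi-positive.

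The obstruction to doing this for every $\boldsymbol{\omega}$ is genuine: by Example~1.8 of \cite{Dutkay-Lai-2017} the full family of tails need not be equi-positive even when the pairs are drawn from a finite set, so some sequences $\boldsymbol{\omega}$ yield non-spectral measures. Accordingly I would only aim for an almost sure statement. I would fix a finite reservoir $F\sse\N$ of symbols carrying most of the $\mathbf p$-mass and a finite \emph{good} word $W$ over $F$, chosen so that the leading block it determines behaves like a long piece of a single self-similar measure $\mu_{N_j,B_j}$; here the hypothesis $0\in B_k$ is used to anchor the digit sets, to normalise the spectra pieces $L_k\ni0$, and to control the integral periodic zero set, which is what should yield a \emph{uniform} lower bound (an equi-positivity constant $\varepsilon_0>0$) for all tails beginning with $W$. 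Tightness of $\Phi$ enters here to prevent escape of mass, so that the factors of a tail beyond the block $W$ cannot destroy this lower bound.

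The probabilistic core is then short: under a Bernoulli measure $\PP$ the coordinates $\omega_1,\omega_2,\dots$ are independent and identically distributed, so the events $\{\omega_{k+1}\cdots\omega_{k+|W|}=W\}$, $k\ge0$, are independent and each has the same probability $p_W=\prod_{j}p_{w_j}>0$. By the second Borel--Cantelli lemma, for $\PP$-a.e. $\boldsymbol{\omega}$ the word $W$ occurs at infinitely many positions $k_1<k_2<\cdots$; at each such position the tail $\mu^{\bn}_{>k_t}(\boldsymbol{\omega})$ begins with $W$, so $\{\mu^{\bn}_{>k_t}(\boldsymbol{\omega})\}_{t\ge1}$ is equi-positive with the uniform constant $\varepsilon_0$. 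Invoking the completeness criterion then shows that $\mu^{\bn}(\boldsymbol{\omega})$ is spectral for every $\boldsymbol{\omega}$ in this conull set, which is the assertion; the exceptional null set is measurable, being a countable combination of cylinder events.

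The main obstacle I expect is the second step, namely producing the pair-independent equi-positivity constant $\varepsilon_0$ for all tails that begin with the good word $W$. This requires (i) truncating the infinite alphabet to a finite reservoir $F$ without discarding the frequencies that matter, (ii) extracting from $0\in B_k$ and admissibility a quantitative, uniform lower bound for the relevant Fourier products on a fixed compact frequency range together with a usable description of the integral periodic zero set, and (iii) combining this with tightness so that the uncontrolled remainder of each tail is harmless. Once equi-positivity along $\{k_t\}$ is in hand, the passage to spectrality and the Borel--Cantelli step are routine.
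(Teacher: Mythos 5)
Your outline correctly identifies the main ingredients (recurrence of a good pattern under $\PP$, admissibility preserved when the modulus is raised to a power, tightness to tame the tails), but it is not a proof: the step you yourself flag as ``the main obstacle'' --- a uniform equi-positivity constant $\varepsilon_0$ valid for \emph{all} tails beginning with the good word $W$ --- is precisely the heart of the matter, and it is left unestablished. This is not a routine verification: the continuation after the block $W$ ranges over an infinite alphabet of admissible pairs and over all the varying exponent blocks of $\mathbf{n}$, and no argument is given that a lower bound survives uniformly over this infinite family. The paper avoids exactly this difficulty by a different mechanism. Instead of a fixed finite word recurring (second Borel--Cantelli), it proves a stronger recurrence statement (Lemma \ref{lemma_convergence_of_word}, via the strong law of large numbers applied to prefixes of \emph{every} length $q$): for $\PP$-a.e. $\boldsymbol{\omega}$ there are shifts $\sigma^{k_j}(\boldsymbol{\omega})$ converging in $\Omega$ to a fixed periodic word $\boldsymbol{\alpha}=(j_1\cdots j_m)^{\infty}$ built from a finite set $\mathcal{I}$ with $\gcd\big(\bigcup_{i\in\mathcal{I}}B_i\big)=1$. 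Combined with boundedness of $\mathbf{n}$ (the unbounded case being dispatched separately by Theorem \ref{thm_ub}) and compactness of $\{1,\dots,M\}^{\N}$, one extracts a subsequence along which $(\sigma^{k_j}(\boldsymbol{\omega}),\{n_{k_j+l}\}_{l})$ converges to $(\boldsymbol{\alpha},\mathbf{m})$; the continuity argument of Lemma \ref{lemma_continuous_mapping_from_symbol_space_to_Prob_space} then shows that the tails $\nu^{\mathbf{n}}_{>k_j}(\boldsymbol{\omega})$ converge weakly to the \emph{single} measure $\mu^{\mathbf{m}}(\boldsymbol{\alpha})$, which is generated by finitely many admissible pairs and has $\mathcal{Z}(\mu^{\mathbf{m}}(\boldsymbol{\alpha}))=\emptyset$ by Theorem \ref{thm_IPZSE}; Theorem \ref{thm_ipzs} then yields spectrality. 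No uniform constant over all continuations is ever needed --- only weak convergence along one subsequence to one limit with empty integral periodic zero set.

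Two further gaps in your plan. First, with only a fixed finite word $W$ recurring, the tails at the occurrence positions have arbitrary continuations, so they need not converge weakly along any subsequence; hence the convergence criterion of Theorem \ref{thm_ipzs} cannot be invoked, and you are forced back onto the unproved family equi-positivity step (also, the occurrence events you list are independent only along suitably spaced positions, a minor fix). Second, you do not treat the case $\gcd\big(\bigcup_{k}B_k\big)=d>1$, where $0\in B_k$ alone gives no control on the integral periodic zero set and the hypothesis of Theorem \ref{thm_IPZSE} fails; the paper needs the dichotomy of Lemma \ref{lemma_gcd_of_B} together with the rescaling $B_k\mapsto B_k/d$ and Lemma \ref{lemma_invariant_of_spectrality_under_lineartransf}. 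Likewise, the exponent blocks $n_{k_t+1},\dots,n_{k_t+|W|}$ change with the occurrence position $k_t$; for unbounded $\mathbf{n}$ this destroys any finiteness or compactness of the possible ``heads,'' which is exactly why the paper splits off the unbounded case and handles it by showing the tails converge to $\delta_0$ (Theorem \ref{thm_ub}).
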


	All these conclusions depend on the existence of $\mu(\boldsymbol{\omega})$ and the tightness of $\Phi$, and generally, both the existence of $\mu(\boldsymbol{\omega})$ and the tightness of $\Phi$ are very difficult to verify. Next, we provide some sufficient conditions for them.
	
	Given a sequence $\{( N_k,B_k)\}_{k=1}^\infty $, we say $\{( N_k,B_k)\}_{k=1}^\infty $ satisfies {\it remainder bounded condition (RBC)} if
		\[\sum_{k=1}^{\infty} \frac{\# B_{k,2}}{\# B_k} < \infty,\]
		where $B_{k,1}=B_k \cap \{0,1,\cdots,N_k-1\}$ and $B_{k,2}=B_k \backslash B_{k,1}.$ The following conclusion provides a sufficient condition for the existence of $\mu(\boldsymbol{\omega})$ and the tightness of $\Phi$.
	\begin{theorem}\label{thm_tight}			
		Let $\{( N_k,B_k)\}_{k=1}^\infty $ be a sequence satisfying {\it RBC} where $0\in B_k \subseteq \mathbb{N}$ and $\# B_k\le N_k$ for all $k>0$.  Suppose that
		$$
		\sup_{k\ge 1}\frac{\log\max B_k}{\log N_k}<\f.
		$$ 			
		Then the infinite convolution $\mu(\boldsymbol{\omega})$ given by \eqref{def_corresponding_P_of_random_measure} exists for every $\boldsymbol{\omega} \in \Omega$. Moreover $\Phi$ given by \eqref{def_setPhi}  is tight.
	\end{theorem}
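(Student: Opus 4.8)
The plan is to realize each partial convolution probabilistically and to reduce both assertions to a single uniform estimate on a random series. Fix $\boldsymbol{\omega}\in\Omega$ and let $X_1,X_2,\ldots$ be independent with $X_j$ uniformly distributed on $B_{\omega_j}$, and set $P_j=\prod_{i=1}^{j}N_{\omega_i}$ with $P_0=1$. Then $\mu_k(\boldsymbol{\omega})$ (the case $\bn=\{1\}$) is exactly the law of $S_k=\sum_{j=1}^{k}X_j/P_j$. Since $B_k\subseteq[0,\f)$ the summands are non-negative, so $S_k$ increases to a limit $S_\f\in[0,\f]$; hence $\mu(\boldsymbol{\omega})$ exists precisely when $S_\f<\f$ almost surely, in which case it is the law of $S_\f$. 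Likewise, because a family on $\R$ is tight iff the escaping mass vanishes uniformly, tightness of $\Phi$ is equivalent to $\sup_{\boldsymbol{\omega}}\PP(S_\f>R)\to0$ as $R\to\f$. Thus everything reduces to controlling $S_\f$ uniformly in $\boldsymbol{\omega}$.

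First I would split each $X_j$ according to $B_{\omega_j,1}$ and $B_{\omega_j,2}$. On $\{X_j\in B_{\omega_j,1}\}$ one has $X_j<N_{\omega_j}$, so $X_j/P_j<1/P_{j-1}\le 2^{-(j-1)}$; hence the small part contributes at most $\sum_{j\ge1}2^{-(j-1)}=2$, deterministically and uniformly in $\boldsymbol{\omega}$. It remains to bound the large part $Z=\sum_{j\ge1}a_j\mathbf{1}_{E_j}$, where $E_j=\{X_j\in B_{\omega_j,2}\}$ are independent with $\PP(E_j)=\#B_{\omega_j,2}/\#B_{\omega_j}$ and $a_j:=\max B_{\omega_j}/P_j\le N_{\omega_j}^{M}/P_j$, with $M:=\sup_k(\log\max B_k)/(\log N_k)<\f$, so that $\max B_k\le N_k^M$. (If $M\le1$ then $a_j\le 1/P_{j-1}\le 2^{-(j-1)}$ and $Z\le2$, making both claims immediate; so assume $M>1$.)

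The heart of the matter, and the main obstacle, is that the naive first moment $\E[Z]=\sum_j a_j\PP(E_j)$ may diverge: when the $N_{\omega_j}$ grow quickly and $\max B_{\omega_j}$ is close to $N_{\omega_j}^M$, a single large element makes $a_j$ huge. The resolution is to group positions by the type they use. If the index $k$ occurs for the $m$-th time at position $j$, then $P_{j-1}\ge N_k^{\,m-1}$ from its earlier occurrences, whence $a_j\le N_k^{M}/N_k^{m}=N_k^{M-m}$ decays geometrically in $m$. Summing the truncated weights over all occurrences of a fixed $k$ gives $\sum_{m\ge1}\min(N_k^{M-m},1)\le C_M$ for a constant $C_M$ depending only on $M$, so that $\sum_j \min(a_j,1)\PP(E_j)\le C_M\sum_k \#B_{k,2}/\#B_k<\f$ by RBC, with a bound independent of $\boldsymbol{\omega}$. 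For existence this suffices: the same grouping gives $\sum_{j:a_j>1}\PP(E_j)\le\lceil M\rceil\sum_k\#B_{k,2}/\#B_k<\f$, so by Borel--Cantelli almost surely only finitely many events $\{E_j:a_j>1\}$ occur, while the terms with $a_j\le1$ have summable expectation; hence $Z<\f$ and $S_\f<\f$ almost surely for every $\boldsymbol{\omega}$.

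For tightness I would decompose $Z=Z_{\le1}+Z_{\mathrm{mod}}+Z_{\mathrm{big}}$ collecting the indices with $a_j\le1$, $1<a_j\le\sqrt R$, and $a_j>\sqrt R$. Markov's inequality applied to $Z_{\le1}$ (mean $\le C_M\cdot\mathrm{RBC}$) and to $Z_{\mathrm{mod}}$ (mean $\le\sqrt R\sum_{a_j>1}\PP(E_j)=O(\sqrt R)$ uniformly) shows $\PP(Z_{\le1}>R/3)$ and $\PP(Z_{\mathrm{mod}}>R/3)$ are $O(R^{-1/2})$. For $Z_{\mathrm{big}}$ a union bound gives $\PP(Z_{\mathrm{big}}>0)\le\sum_{a_j>\sqrt R}\PP(E_j)$; since $a_j\le N_k^{M-1}$ forces any contributing type to satisfy $N_k>R^{1/(2(M-1))}$, this is at most $\lceil M\rceil\sum_{k:N_k>R^{1/(2(M-1))}}\#B_{k,2}/\#B_k$. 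As $\sum_k\#B_{k,2}/\#B_k$ converges, its restriction to $\{k:N_k>T\}$ tends to $0$ as $T\to\f$ uniformly (take a finite set carrying all but $\ep$ of the sum and let $T$ exceed the finitely many $N_k$ on it). Letting $R\to\f$ makes all three pieces vanish uniformly in $\boldsymbol{\omega}$, which yields tightness of $\Phi$ and completes the proof.
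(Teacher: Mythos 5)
Your proof is correct, and it reaches the theorem by a genuinely different route than the paper, even though the combinatorial core coincides. Both arguments rest on splitting $B_k$ into $B_{k,1}$ and $B_{k,2}$ and on the grouping-by-type observation: the $m$-th occurrence of type $k$ carries a normalizing product containing $N_k^{m}$, so its large-digit weight is at most $N_k^{M-m}$; in the paper this appears as the bound $N_{\alpha_{1}}\cdots N_{\alpha_{l}}\ge \mathbf{N}_{\alpha_{t_{k-1}}}N_{\alpha_{t_k}}^{j}$ together with the case split at the $C$-th occurrence. The differences lie in how this estimate is exploited. For existence, the paper verifies the series criterion of Theorem~\ref{thm_weak-convergence-infinite-convolution}, whereas you realize $\mu_k(\boldsymbol{\omega})$ as the law of a nondecreasing random series and deduce almost sure (hence weak) convergence from the first Borel--Cantelli lemma plus a first-moment bound on the truncated part; this makes the existence half self-contained, and note that only the easy direction of your ``exists precisely when $S_\f<\f$ a.s.'' claim is actually used. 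For tightness, the paper rearranges the convolution via Lemma~\ref{lem_factorization} into single-type factors from the families $\Phi_k$, uses the common compact support of the finitely many pairs with index below a threshold $n_0$, and bounds the escaping mass by the RBC tail; you instead prove the quantitative uniform bound $\sup_{\boldsymbol{\omega}}\PP(S_\f>R)\to 0$ by Markov's inequality applied to the truncations at $1$ and at $\sqrt{R}$, plus a union bound in which $a_j>\sqrt{R}$ forces $N_k>R^{1/(2(M-1))}$, so the relevant piece of the RBC sum vanishes as $R\to\f$ (your finite-exceptional-set argument for that last point is exactly right, since the sets $\{k:N_k>T\}$ decrease to $\emptyset$). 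What your route buys is a unified probabilistic treatment of both assertions, with explicit uniform rates, and no appeal to the convergence criterion or to convolution rearrangement; what the paper's route buys is an argument that stays entirely at the level of measures and convolutions and reuses machinery (Theorem~\ref{thm_weak-convergence-infinite-convolution}, Lemma~\ref{lem_factorization}) already established in the paper.
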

Note that  $\# B_k\le N_k$ assumed in Theorem~\ref{thm_tight} is because we do not use  admissible pairs in the sequence $\{( N_k,B_k)\}_{k=1}^\infty $,	and it is automatically satisfied for admissible pairs.
	
	The following  simple condition may be more useful to obtain spectral random measures in practice.
	\begin{corollary}\label{thm_CC}
		Given a sequence of admissible pairs $\{( N_{k}, B_{k})\}_{k=1}^{\infty}$ where $0\in B_k \subseteq \mathbb{N}$ for all $k>0$ satisfying
		\begin{equation}\label{cdn_NBIF}
			\sup_{k\geq1} \{N_{k}^{-1}b:b\in B_{k}\} < \infty.
		\end{equation}
Let $\mu^{\bn}$ be the mapping given by \eqref{def_random_measure}.  		Then for every sequence $\bn$ of positive integers,
		
		$(\rmnum{1})$ the mapping $\mu^{\bn}$ is a random measure;
		
		$(\rmnum{2})$ for every Bernoulli measure $\mathbb{P}$ on $\Omega$, $\mu^{\bn}$ is spectral $\mathbb{P}$-a.e.

		$(\rmnum{3})$ if $\mathbf{n}$ is unbounded,  $\mu^{\bn}$ is a spectral random measure.
	\end{corollary}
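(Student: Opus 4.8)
The plan is to notice that hypothesis \eqref{cdn_NBIF} alone pins every measure $\mu(\boldsymbol{\omega})$ to one fixed compact interval, which at once delivers the existence of each $\mu(\boldsymbol{\omega})$ and the tightness of $\Phi$; the three assertions then fall out by quoting Theorems \ref{thm_measurable}, \ref{thm_TC} and \ref{thm_ub} in turn, so the corollary is really a repackaging of the main theorems under a condition that is trivial to check in practice.

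I would first set $C := \sup_{k \geq 1} \max_{b \in B_k} N_k^{-1} b$, which is finite by \eqref{cdn_NBIF}, so that $\max B_k \leq C N_k$ for every $k$. Fixing $\boldsymbol{\omega} = \omega_1 \omega_2 \cdots \in \Omega$, any point of the support of the partial convolution $\mu_k(\boldsymbol{\omega})$ (the $\bn \equiv 1$ instance of \eqref{def_mubk}) has the form $\sum_{j=1}^{k} b_{\omega_j}(N_{\omega_1}\cdots N_{\omega_j})^{-1}$ with $b_{\omega_j} \in B_{\omega_j}$. Since $0 \in B_k \subseteq \mathbb{N}$ and $N_{\omega_i} \geq 2$, each summand is nonnegative and at most $C N_{\omega_j}(N_{\omega_1}\cdots N_{\omega_j})^{-1} = C(N_{\omega_1}\cdots N_{\omega_{j-1}})^{-1} \leq C\, 2^{-(j-1)}$, so every such point lies in $[0, 2C]$. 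Hence $\operatorname{supp}\mu_k(\boldsymbol{\omega}) \subseteq [0, 2C]$ uniformly in $k$ and in $\boldsymbol{\omega}$.

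From this uniform bound both ingredients follow. For existence I would pass to Fourier transforms: the same estimate yields $|1 - \widehat{\delta_{B_{\omega_j}}}((N_{\omega_1}\cdots N_{\omega_j})^{-1}\xi)| \leq 2\pi C|\xi|\, 2^{-(j-1)}$ on any bounded $\xi$-range, so the infinite product representing $\widehat{\mu_k(\boldsymbol{\omega})}$ converges uniformly on compacta to a limit continuous at the origin; Lévy's continuity theorem then forces $\mu_k(\boldsymbol{\omega})$ to converge weakly, so $\mu(\boldsymbol{\omega})$ exists for every $\boldsymbol{\omega}$. For tightness, every member of $\Phi$ is supported on the single compact set $[0, 2C]$, so $\Phi$ is tight. (The bound $\max B_k \leq C N_k$ also gives $\sup_k \log\max B_k/\log N_k < \infty$, so one could instead route through Theorem \ref{thm_tight}, but the direct support argument bypasses the need to verify {\it RBC}.)

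With the existence of $\mu(\boldsymbol{\omega})$ and the tightness of $\Phi$ secured, Proposition \ref{prop_tightness_guarantees_existence} guarantees that $\mu^{\bn}(\boldsymbol{\omega})$ exists for every positive integral sequence $\bn$, and the three claims follow by direct citation: (i) from Theorem \ref{thm_measurable}, (ii) from Theorem \ref{thm_TC}, and (iii) from Theorem \ref{thm_ub}, all of whose hypotheses ($N_k \geq 2$, $0 \in B_k \subseteq \mathbb{N}$, admissibility, and tightness of $\Phi$) are in force. Because everything reduces to the main theorems, there is no serious obstacle here; the only step demanding mild care is the weak-convergence argument for existence, and the uniform geometric tail bound coming from \eqref{cdn_NBIF} is exactly what makes that go through.
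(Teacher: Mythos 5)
Your proof is correct, but it takes a genuinely different route from the paper's. The paper's own proof is a one-liner: it observes that \eqref{cdn_NBIF} gives $\sup_{k\ge 1}\log\max B_k/\log N_k<\infty$ and then cites Theorem~\ref{thm_tight} for existence and tightness, followed by Theorems~\ref{thm_measurable} and \ref{thm_TC}. That citation is actually delicate, because Theorem~\ref{thm_tight} also assumes the remainder bounded condition (RBC), and \eqref{cdn_NBIF} does \emph{not} imply RBC: for instance $N_k=2$, $B_k=\{0,3\}$ for all $k$ is an admissible pair satisfying \eqref{cdn_NBIF} with constant $3/2$, yet $\#B_{k,2}/\#B_k=1/2$ for every $k$, so the RBC series diverges. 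Your argument sidesteps this entirely (as you note parenthetically): the bound $\max B_k\le CN_k$ together with $N_k\ge 2$ puts every partial convolution, hence every $\mu(\boldsymbol{\omega})$, inside the single compact interval $[0,2C]$, which yields tightness of $\Phi$ trivially and existence via uniform convergence of the Fourier products and L\'evy's continuity theorem; the rest is the same citation of Proposition~\ref{prop_tightness_guarantees_existence} and Theorems~\ref{thm_measurable}, \ref{thm_TC}, \ref{thm_ub}. This is in fact the argument the paper only sketches in the remark immediately following the corollary ("the support of the realization is contained in a common compact set"), so your write-up is the more self-contained and, strictly speaking, the more watertight of the two. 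Two small remarks: for existence you could stay closer to the paper's toolkit by checking the series criterion of Theorem~\ref{thm_weak-convergence-infinite-convolution} (each term is at most $C2^{-(k-1)}$, so the series converges geometrically), avoiding any appeal to L\'evy's theorem; and for part $(\rmnum{3})$ note that the paper's proof omits the explicit citation of Theorem~\ref{thm_ub}, which you correctly supply.
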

Note that the assumption \eqref{cdn_NBIF} actually implies that for every $\boldsymbol{\omega} \in \Omega$, the support of  the realization $\mu^{\mathbf{n}}(\boldsymbol{\omega})$  is contained in a common compact set, and this  means $\Phi$ is tight.   In Example \ref{ex_tnc},  we construct a $\Phi$ which is tight but with   no common compact support.

	Suppose that  the sequence of admissible pairs $\{(N_k,B_k)\}_{k=1}^\f$ only consists of finitely many admissible pairs, that is $\{(N_k,B_k)\}_{k=1}^M.$ 	
	We write $\Omega_M=\{1,2,\ldots, M\}^{\mathbb{N}}$ and define the mapping $\mu^\bn$  the same as  \eqref{def_random_measure}  using finitely many admissible pairs $\{(N_k,B_k)\}_{k=1}^M.$ Note that $\Omega_M$ is a compact subset of $\Omega$. 	In this special case, the assumption \eqref{cdn_NBIF} is automatically satisfied, and we have the following conclusions.
	
	\begin{corollary}\label{cor_fad}
		Given  finite admissible pairs $\{( N_{k}, B_{k})\}_{k=1}^M$ where $0\in B_k \subseteq \mathbb{N}$ for all $k>0$. Let $\mu^{\bn}$ be the mapping given by \eqref{def_random_measure}. Then for every sequence $\mathbf{n}$ of positive integers,
		
		$(\rmnum{1})$ the mapping  $\mu^{\bn}$ is a random measure;
		
		$(\rmnum{2})$ for every Bernoulli measure $\mathbb{P}$ on $\Omega_M$,  $\mu^{\bn}$ is spectral $\mathbb{P}$-a.e.
		
		$(\rmnum{3})$ if $\mathbf{n}$ is unbounded,  $\mu^{\bn}$ is a spectral random measure.
	\end{corollary}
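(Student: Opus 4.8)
The plan is to deduce Corollary~\ref{cor_fad} from Corollary~\ref{thm_CC}, the only delicate point being the passage from the finite index set $\{1,\dots,M\}$ and the sequence space $\Omega_M$ to the infinite setting in which Corollary~\ref{thm_CC} is stated. First I would embed the finite family into an infinite one: set $(\widetilde{N}_k,\widetilde{B}_k)=(N_k,B_k)$ for $1\le k\le M$ and $(\widetilde{N}_k,\widetilde{B}_k)=(N_1,B_1)$ for $k>M$. Each $(\widetilde{N}_k,\widetilde{B}_k)$ is again an admissible pair with $0\in\widetilde{B}_k\subseteq\mathbb{N}$, so $\{(\widetilde{N}_k,\widetilde{B}_k)\}_{k=1}^\infty$ meets the standing hypotheses of Corollary~\ref{thm_CC}. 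Since every $\boldsymbol{\omega}\in\Omega_M$ uses only letters in $\{1,\dots,M\}$, the realization $\mu^{\bn}(\boldsymbol{\omega})$ built from the extended family coincides with the one built from $\{(N_k,B_k)\}_{k=1}^M$, so no realization over $\Omega_M$ is changed.

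Next I would verify the growth condition \eqref{cdn_NBIF} for the extended family. As this family involves only the $M$ original pairs, the set $\{\widetilde{N}_k^{-1}b : b\in\widetilde{B}_k,\ k\ge 1\}$ is contained in the finite set $\{N_k^{-1}b : b\in B_k,\ 1\le k\le M\}$; its supremum is therefore a maximum over finitely many numbers and is finite. Hence \eqref{cdn_NBIF} holds automatically, and Corollary~\ref{thm_CC} applies to $\{(\widetilde{N}_k,\widetilde{B}_k)\}_{k=1}^\infty$, yielding conclusions $(\rmnum{1})$--$(\rmnum{3})$ on the whole space $\Omega$.

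It then remains to transfer these conclusions to $\Omega_M$. For $(\rmnum{1})$, the restriction to the measurable subset $\Omega_M$ of a random measure on $(\Omega,\mathcal{F})$ is again a random measure, since both the $\mathcal{F}$-measurability of $\boldsymbol{\omega}\mapsto\mu^{\bn}(\boldsymbol{\omega},B)$ and the measure property of $B\mapsto\mu^{\bn}(\boldsymbol{\omega},B)$ pass to $\Omega_M$. For $(\rmnum{2})$, a Bernoulli measure on $\Omega_M$ with probability vector $(p_1,\dots,p_M)$ is exactly the Bernoulli measure on $\Omega$ associated with $(p_1,\dots,p_M,0,0,\dots)$, which is supported on $\Omega_M$; thus the $\mathbb{P}$-a.e.\ spectrality of Corollary~\ref{thm_CC}$(\rmnum{2})$ is precisely the claim. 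For $(\rmnum{3})$, being a spectral random measure means $\mu^{\bn}(\boldsymbol{\omega})$ is spectral for all $\boldsymbol{\omega}$, and Corollary~\ref{thm_CC}$(\rmnum{3})$ supplies this for every $\boldsymbol{\omega}\in\Omega\supseteq\Omega_M$.

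The argument is largely bookkeeping; the one step I would watch most carefully is the compatibility of the extension, namely that enlarging the finite family neither alters the realizations indexed by $\Omega_M$ nor breaks the admissibility and support hypotheses, together with the identification of Bernoulli measures on $\Omega_M$ with those on $\Omega$ concentrated on $\Omega_M$. Once these are in place, each of $(\rmnum{1})$--$(\rmnum{3})$ is a direct readout of Corollary~\ref{thm_CC}.
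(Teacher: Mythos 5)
Your proposal is correct and follows essentially the same route as the paper: the paper also obtains Corollary~\ref{cor_fad} as a direct special case of Corollary~\ref{thm_CC}, observing that with only finitely many admissible pairs the condition \eqref{cdn_NBIF} is a maximum over finitely many finite numbers and hence automatic. The extension of the finite family to an infinite one and the identification of Bernoulli measures on $\Omega_M$ with Bernoulli measures on $\Omega$ concentrated on $\Omega_M$ are exactly the bookkeeping the paper leaves implicit, and you carry it out correctly.
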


Finally, it is worth  to point out that spectral random measures are not rare, and the following conclusion reveals that  spectral random measures are actually plenty from dimension point of view, that is, they have the intermediate-value property in dimensions. Moreover, even for a given spectral random measure, it may also have very rich geometric structures, see Corollary \ref{cor_dimensions}.	We refer readers to \cite{Bk_KJF2} for details on dimension theory.
	\begin{theorem} \label{thm_imp}
		For every $s \in (0,1]$, there exist a spectral random measure $\mu$  and a Bernoulli probability measure $\mathbb{P} $ on $\Omega$ such that
		$$
		\dimh\mu =\dimp\mu = s,
		$$
$\mathbb{P}$-almost surely, where $\dimh$ and $\dimp$ denote the Hausdorff dimension and packing dimension, respectively.
	\end{theorem}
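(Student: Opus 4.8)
The plan is to realize every target value $s\in(0,1]$ by a homogeneous Moran construction built from just two admissible pairs sharing a common base, and then to tune the Bernoulli weight so that the almost-sure dimension equals $s$. Fix an even integer $N\ge 2$ large enough that $\log 2/\log N<s$, and consider the two pairs $(N,\{0,1\})$ and $(N,\{0,1,\dots,N-1\})$. Both are admissible: taking $L=\{0,N/m,\dots,(m-1)N/m\}$ with $m=\#B$ turns the associated matrix into a rescaled DFT matrix, which is unitary; moreover $0\in B_k$ and $\sup_k\{N^{-1}b:b\in B_k\}\le (N-1)/N<\f$, so condition \eqref{cdn_NBIF} holds and Corollary~\ref{cor_fad} applies. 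Working on $\Omega_2=\{1,2\}^{\N}$, assign symbol $1$ the pair $(N,\{0,1\})$ and symbol $2$ the pair $(N,\{0,\dots,N-1\})$, and let $\mathbb P$ be the Bernoulli measure with vector $(p,1-p)$. Since both pairs use the same base $N$, for every $\boldsymbol\omega$ the level-$k$ cylinder of $\mu(\boldsymbol\omega)$ has diameter $\asymp N^{-k}$ and mass $\big(\prod_{j\le k}\#B_{\omega_j}\big)^{-1}$, so the relevant pre-dimension ratio is $\big(k\log N\big)^{-1}\sum_{j\le k}\log\#B_{\omega_j}$.

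By the strong law of large numbers, $\tfrac1k\sum_{j\le k}\log\#B_{\omega_j}\to p\log 2+(1-p)\log N$ for $\mathbb P$-a.e. $\boldsymbol\omega$, hence the pre-dimension ratio converges $\mathbb P$-a.s. to
\[
s(p)=\frac{p\log 2+(1-p)\log N}{\log N}=1-p\Big(1-\frac{\log 2}{\log N}\Big).
\]
As $p$ runs over $[0,1]$ this decreases continuously from $1$ to $\log 2/\log N$, so the explicit choice $p=\frac{1-s}{1-\log 2/\log N}\in[0,1]$ gives $s(p)=s$ (the endpoint $s=1$ corresponding to $p=0$). Because the limit genuinely exists, not merely as a $\liminf$/$\limsup$, the base-$N$ Moran measure $\mu(\boldsymbol\omega)$ is exact-dimensional with $\dimh\mu(\boldsymbol\omega)=\dimp\mu(\boldsymbol\omega)=s$ for $\mathbb P$-a.e. $\boldsymbol\omega$; here the digit sets $B_k\subseteq\{0,\dots,N-1\}$ guarantee disjoint basic intervals, so the standard dimension formula for Moran measures applies (see \cite{Bk_KJF2}).

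It remains to upgrade spectrality from almost-everywhere to everywhere, since a \emph{spectral random measure} must be spectral for every $\boldsymbol\omega$. With $\bn=\{1\}$, Corollary~\ref{cor_fad}(ii) gives spectrality only $\mathbb P$-a.e.; to obtain it for all $\boldsymbol\omega$ I would invoke Corollary~\ref{cor_fad}(iii) (equivalently Theorem~\ref{thm_ub}), which requires an \emph{unbounded} sequence $\bn$. The difficulty is that an unbounded $\bn$ changes the contraction ratios to $N^{-n_j}$ and thus the denominator of the dimension ratio to $\big(\sum_{j\le k}n_j\big)\log N$. To preserve the almost-sure dimension I would take spikes that are both sparse and of slow growth, e.g. $n_j=1$ except $n_{2^i}=i$. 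Then $\bn$ is unbounded while $\sum_{j\le k}n_j=k+O((\log k)^2)=k+o(k)$, so the denominator is still $(1+o(1))k\log N$ and the pre-dimension ratio converges $\mathbb P$-a.s. to the same value $s$.

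The main obstacle is precisely the dimension analysis in the presence of these spikes: because $B_k\subseteq\{0,\dots,N-1\}$ together with a large $n_j$ forces the level-$j$ refinement to cluster into a tiny left portion of its parent cylinder, the measure $\mu^{\bn}(\boldsymbol\omega)$ is no longer self-similar, and one must verify directly that it stays exact-dimensional with both dimensions equal to $s$. The saving feature is that by level $2^i$ the accumulated scale $E_k=\big(\sum_{j\le k}n_j\big)\log N\asymp 2^i\log N$ dwarfs the spike contribution $i\log N$, so a two-sided estimate of $\log\mu^{\bn}(\boldsymbol\omega)(B(x,r))/\log r$ across each spike interval pins the local dimension to $s+o(1)$ for $\mu^{\bn}(\boldsymbol\omega)$-a.e. $x$; establishing this uniform control of the local dimension across all intermediate scales is the technical heart of the argument. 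Combining it with Corollary~\ref{cor_fad}(iii) then yields a spectral random measure $\mu^{\bn}$ with $\dimh\mu^{\bn}=\dimp\mu^{\bn}=s$ $\mathbb P$-almost surely, as required.
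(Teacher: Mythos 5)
Your first two paragraphs are, in substance, the paper's own proof. The paper takes finitely many admissible pairs sharing the digit set $\{0,1,\dots,N-1\}$ but with two different bases, $N_1=N^{n_0}$ and $N_k=N$ for $k\ge 2$, derives the almost-sure dimension formula $\dimh\mu=\dimp\mu=\bigl(\sum_k p_k\log\#B_k\bigr)/\bigl(\sum_k p_k\log N_k\bigr)$ from Frostman's lemma and Birkhoff's ergodic theorem (Lemma \ref{lemma_dim_expression}), and then tunes the Bernoulli vector by an intermediate-value argument (Proposition \ref{prop_dim_property}); you instead fix one base $N$ and vary the digit sets $\{0,1\}$ versus $\{0,\dots,N-1\}$, apply the strong law of large numbers, and solve for $p$ explicitly. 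These differ only cosmetically, and both deliver the same conclusion: spectrality $\mathbb{P}$-a.e.\ via Corollary \ref{cor_fad}(ii), and $\dimh\mu(\boldsymbol{\omega})=\dimp\mu(\boldsymbol{\omega})=s$ for $\mathbb{P}$-a.e.\ $\boldsymbol{\omega}$. Be aware that this $\mathbb{P}$-a.e.\ reading is exactly how the paper interprets its own theorem: its proof invokes only Corollary \ref{cor_fad} and concludes that ``$\mu$ is a spectral random measure $\mathbb{P}$-almost surely''; it never establishes spectrality of every realization. So if one accepts the paper's reading, your proposal is already complete after the second paragraph.

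Your last two paragraphs address the discrepancy you correctly noticed --- the paper's formal definition of ``spectral random measure'' demands spectrality for \emph{all} $\boldsymbol{\omega}$, which Corollary \ref{cor_fad}(ii) does not give --- and this is where your argument has a genuine gap. The plan (pass to an unbounded but sparse, slowly growing sequence $\bn$ with $n_j=1$ except $n_{2^i}=i$, then use Corollary \ref{cor_fad}(iii), i.e.\ Theorem \ref{thm_ub}) is sensible, but the crucial claim that $\mu^{\bn}(\boldsymbol{\omega})$ remains exact-dimensional with dimension $s$ is asserted, not proven. Concretely, writing $E_k=\sum_{j\le k}n_j$, at a spike the level-$k$ cylinder's support clusters in a sub-interval of relative width about $N^{-(n_{k+1}-1)}$ of its parent scale, so for radii $r$ between $N^{-E_k-n_{k+1}+1}$ and $N^{-E_k}$ the ball mass is frozen while $r$ shrinks; the lower local dimension is therefore governed by
\[
\liminf_{k\to\infty}\ \frac{\sum_{j\le k}\log\#B_{\omega_j}}{\bigl(E_k+n_{k+1}-1\bigr)\log N},
\]
and one must verify, uniformly over all intermediate scales and $\mathbb{P}$-a.s., that the correction $n_{k+1}=o(E_k)$ leaves both this liminf and the corresponding limsup equal to $s$. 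This is plausible for your choice of spikes and the estimate above shows how to do it, but it is precisely the step you label ``the technical heart'' and do not carry out; as written, the everywhere-spectral version is not established. Either drop the upgrade and prove the statement in the form the paper actually proves it, or complete this local-dimension estimate across spike scales.
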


	\section{Preliminaries}\label{section_preliminaries}	
	\subsection{Fourier Transform and Weak Convergence}
	We write $\mathcal{P}(\mathbb{R})$ for the set of all Borel probability measures on $\mathbb{R}$ and $C_{b}(\mathbb{R})$  for the set of all bounded continuous functions on $\mathbb{R}$. Given $\mu, \mu_{1}, \mu_{2}, \dots \in \mathcal{P}(\mathbb{R})$, 	 we say that $\mu_{k}$ converges weakly to $\mu$ if
	\begin{equation*}
		\lim\limits_{k\to\infty} \int_{\mathbb{R}} f(x) \mathrm{d}\mu_{k}(x) = \int_{\mathbb{R}} f(x) \mathrm{d}\mu(x),
	\end{equation*}
	for every $f\in C_{b}(\mathbb{R})$. Given a subset $\Psi \subseteq \mathcal{P}(\mathbb{R})$, we say that $\Psi$ is \emph{tight} (sometimes called {\it uniformly tight}) if for each $\epsilon > 0$, there exists a compact subset $K \subseteq \mathbb{R}$ such that
	\begin{equation*}
		\inf_{\mu\in\Psi} \mu(K) > 1 - \epsilon,
	\end{equation*}
	see \cite{Billi99, Bogac18} for details.
	
	For $\mu, \nu \in \mathcal{P}(\mathbb{R})$, the \emph{convolution} $\mu * \nu$ is given by
	\begin{equation*}
		\mu * \nu(B) = \int_{\mathbb{R}} \nu(B-x) \mathrm{d}\mu(x) = \int_{\mathbb{R}} \mu(B-y) \mathrm{d}\nu(y),
	\end{equation*}
	for every Borel subset $B\subseteq\mathbb{R}$. Equivalently, the convolution $\mu*\nu$ is the unique Borel probability measure satisfying
	\begin{equation*}
		\int_{\mathbb{R}} f(x) \mathrm{d}\mu * \nu(x) = \int_{\mathbb{R} \times \mathbb{R}} f(x+y) \mathrm{d}\mu \times \nu(x,y),
	\end{equation*}
	for all $f\in C_{b}(\mathbb{R})$.

 For $\mu\in\mathcal{P}(\mathbb{R})$, its \emph{Fourier transform} is defined by
	\begin{equation*}
		\widehat{\mu}(\xi) = \int_{\mathbb{R}} e^{-2\pi i \xi \cdot x} \mathrm{d}\mu(x).
	\end{equation*}
It is straightforward that
	\begin{equation*}
		\widehat{\mu * \nu}(\xi) = \widehat{\mu}(\xi) \widehat{\nu}(\xi),
	\end{equation*}
	 for all $\xi \in \mathbb{R}$.
	
The following conclusions are useful to study the weak convergence of measures, and we refer readers to \cite{Billi99} for details.
	\begin{lemma} \label{lemma_weak_convergence_equivalent_statement}
		Let $\mu, \mu_{1}, \mu_{2}, \dots \in \mathcal{P}(\mathbb{R})$. Then $\mu_{k}$ converges weakly to $\mu$ if and only if $\lim\limits_{k\to\infty} \widehat{\mu}_{k}(\xi) = \widehat{\mu}(\xi)$ for every $\xi \in \mathbb{R}$.
	\end{lemma}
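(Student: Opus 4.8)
The plan is to recognise this as L\'evy's continuity theorem in the normalisation used here and to treat the two implications separately. The forward implication is immediate: if $\mu_k$ converges weakly to $\mu$, then for each fixed $\xi\in\R$ the function $x\mapsto e^{-2\pi i\xi x}$ lies in $C_b(\R)$ (its real and imaginary parts are bounded and continuous), so the defining property of weak convergence applied to this function gives $\widehat{\mu}_k(\xi)\to\widehat{\mu}(\xi)$. The substantive content is the reverse implication, which I would prove by first deducing tightness of $\{\mu_k\}$ from the pointwise convergence of the transforms and then combining Prokhorov's theorem with the injectivity of the Fourier transform.

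The quantitative heart of the argument is the standard estimate
\[
\rho\Big(\big\{x:\ |x|\ge \tfrac{1}{\pi T}\big\}\Big)\ \le\ \frac{1}{T}\int_{-T}^{T}\big(1-\widehat{\rho}(\xi)\big)\,\mathrm{d}\xi,
\]
valid for every $\rho\in\mathcal{P}(\R)$ and every $T>0$, the right-hand side being real since $\widehat{\rho}(-\xi)=\overline{\widehat{\rho}(\xi)}$. To obtain it I would write $\widehat{\rho}$ as an integral, apply Fubini, and evaluate the inner integral as
\[
\frac{1}{T}\int_{-T}^{T}\big(1-e^{-2\pi i\xi x}\big)\,\mathrm{d}\xi \;=\; 2\Big(1-\frac{\sin(2\pi T x)}{2\pi T x}\Big),
\]
which is nonnegative for all $x$ (because $\frac{\sin t}{t}\le 1$ for all real $t$) and is at least $1$ once $|x|\ge \frac{1}{\pi T}$ (because then $u:=2\pi T x$ satisfies $|u|\ge 2$, so $\tfrac{\sin u}{u}\le\tfrac12$); restricting the resulting integral over $\R$ to the region $|x|\ge \frac{1}{\pi T}$ yields the estimate.

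With this in hand, tightness follows. Since $\mu\in\mathcal{P}(\R)$ its transform $\widehat{\mu}$ is continuous with $\widehat{\mu}(0)=1$, so given $\epsilon>0$ I can fix $T$ small enough that $\frac1T\int_{-T}^{T}(1-\widehat{\mu}(\xi))\,\mathrm{d}\xi<\epsilon/2$; as $|\widehat{\mu}_k|\le 1$ uniformly, dominated convergence upgrades $\widehat{\mu}_k\to\widehat{\mu}$ on $[-T,T]$ to convergence of these averages, so $\frac1T\int_{-T}^{T}(1-\widehat{\mu}_k(\xi))\,\mathrm{d}\xi<\epsilon$ for all large $k$, whence $\mu_k(\{|x|\ge 1/(\pi T)\})<\epsilon$ for all large $k$. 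Absorbing the finitely many remaining $\mu_k$, each individually tight, into a larger compact set produces a single compact $K$ with $\inf_k\mu_k(K)>1-\epsilon$, so $\{\mu_k\}$ is tight. By Prokhorov's theorem the sequence is then relatively compact in the weak topology, and I would finish with a subsequence argument: any weak limit $\nu$ of a subsequence satisfies $\widehat{\nu}=\widehat{\mu}$ by the forward implication, hence $\nu=\mu$ by uniqueness of the Fourier transform; since every subsequence has a further subsequence converging weakly to $\mu$, the whole sequence does. The main obstacle is exactly this tightness step: pointwise convergence of transforms does not by itself prevent mass from escaping to infinity, and it is the hypothesis $\mu\in\mathcal{P}(\R)$, guaranteeing continuity of $\widehat{\mu}$ at the origin, that makes the estimate above deliver a uniform compact set.
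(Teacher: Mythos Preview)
Your proof is correct and is precisely the standard proof of L\'evy's continuity theorem. The paper does not supply its own proof of this lemma; it merely states the result and refers the reader to Billingsley's \emph{Convergence of Probability Measures} for details, so there is nothing to compare against beyond noting that your argument is the one found in that reference.
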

	
	\begin{lemma}\label{lem_factorization}
		Let $\{\mu_k\}_{k=1}^\f,\{\nu_k\}_{k=1}^\f\subseteq \mathcal{P}(\R)$. If $\mu_k$ and $\nu_k$ converge weakly to $\mu$ and $\nu$ respectively, then we have $\mu_k*\nu_k$ converges weakly to $\mu*\nu$.
	\end{lemma}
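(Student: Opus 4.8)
The plan is to route everything through the Fourier transform, since the two lemmas immediately preceding the statement are tailored for exactly this situation. By Lemma~\ref{lemma_weak_convergence_equivalent_statement}, weak convergence of Borel probability measures on $\R$ is equivalent to pointwise convergence of their Fourier transforms, while the convolution identity $\widehat{\mu * \nu}(\xi) = \widehat{\mu}(\xi)\,\widehat{\nu}(\xi)$ turns convolution into multiplication on the Fourier side. So the entire statement should collapse to the elementary fact that the product of two pointwise-convergent sequences of complex numbers converges to the product of their limits.

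Concretely, I would first invoke Lemma~\ref{lemma_weak_convergence_equivalent_statement} on each hypothesis to obtain $\widehat{\mu}_k(\xi)\to\widehat{\mu}(\xi)$ and $\widehat{\nu}_k(\xi)\to\widehat{\nu}(\xi)$ for every fixed $\xi\in\R$. Second, I would apply the factorization of the Fourier transform of a convolution to write $\widehat{\mu_k*\nu_k}(\xi)=\widehat{\mu}_k(\xi)\,\widehat{\nu}_k(\xi)$ for all $k$ and all $\xi$. Third, since for each fixed $\xi$ these are two convergent sequences of complex numbers, their product converges, giving $\widehat{\mu}_k(\xi)\,\widehat{\nu}_k(\xi)\to\widehat{\mu}(\xi)\,\widehat{\nu}(\xi)=\widehat{\mu*\nu}(\xi)$. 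Finally, applying the converse direction of Lemma~\ref{lemma_weak_convergence_equivalent_statement} yields that $\mu_k*\nu_k$ converges weakly to $\mu*\nu$, which is exactly the claim.

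I expect no genuine obstacle here: the Fourier characterization does all the work, and the only nontrivial ingredient---that characteristic functions determine weak limits---is already granted as Lemma~\ref{lemma_weak_convergence_equivalent_statement}. The product step is harmless because $|\widehat{\mu}_k(\xi)|\le 1$ and $|\widehat{\nu}_k(\xi)|\le 1$ for all $k$ and $\xi$, so no boundedness difficulty arises; moreover one does not even need uniformity in $\xi$, since the criterion only demands pointwise convergence. If one instead preferred a self-contained argument avoiding Fourier transforms, one could test against an arbitrary $f\in C_b(\R)$ and exploit weak convergence of the product measures $\mu_k\times\nu_k$ to $\mu\times\nu$ together with continuity of the map $(x,y)\mapsto f(x+y)$; but that route requires separately establishing weak convergence on the product space, so the Fourier approach is cleaner and I would adopt it.
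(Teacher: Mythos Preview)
Your argument is correct. The paper does not actually prove this lemma: it states Lemma~\ref{lemma_weak_convergence_equivalent_statement} and Lemma~\ref{lem_factorization} together and simply refers the reader to Billingsley~\cite{Billi99} for details, so there is no in-paper proof to compare against. Your Fourier-transform route, using the equivalence in Lemma~\ref{lemma_weak_convergence_equivalent_statement} together with the convolution identity $\widehat{\mu*\nu}=\widehat{\mu}\,\widehat{\nu}$ already recorded in the preliminaries, is the standard and cleanest justification and would be entirely appropriate here.
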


	\subsection{Admissible pairs and infinite convolutions}
	 For $a,b\in\mathbb{R}$ with $a\neq 0$, we define a linear transformation $T_{a,b}:\mathbb{R}\to\mathbb{R}$ by
	\begin{equation}\label{equation_transform_on_realline}
		T_{a,b}(x)  = ax+b.
	\end{equation}
	The spectrality of measures is invariant under linear transformations.
\begin{lemma}\label{lemma_invariant_of_spectrality_under_lineartransf}
		If $\mu \in \mathcal{P}(\mathbb{R})$ is a spectral measure with a spectrum $\Lambda$, then the measure $\mu\circ T_{a,b}^{-1}$ is a spectral measure with a spectrum $\frac{1}{a}\Lambda$ for $a,b\in\mathbb{R}$ with $a\neq 0$.
	\end{lemma}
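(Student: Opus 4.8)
The plan is to build an explicit unitary operator between $L^2(\nu)$ and $L^2(\mu)$, where $\nu = \mu\circ T_{a,b}^{-1}$ denotes the pushforward, and to show that this operator carries the candidate exponential system indexed by $\frac{1}{a}\Lambda$ onto the given orthonormal basis of $L^2(\mu)$ up to unimodular scalar factors. Since unitaries preserve orthonormal bases, and rescaling the elements of an orthonormal basis by numbers of modulus one again yields an orthonormal basis, this will immediately give that $\frac{1}{a}\Lambda$ is a spectrum for $\nu$.

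First I would define $U \colon L^2(\nu) \to L^2(\mu)$ by $(Uf)(x) = f(T_{a,b}(x)) = f(ax+b)$. Since $a \neq 0$, the map $T_{a,b}$ is a bijection of $\R$, and the change-of-variables identity for pushforward measures, $\int_{\R} g \D\nu = \int_{\R} (g\circ T_{a,b})\D\mu$, applied with $g = |f|^2$ shows that $U$ is an isometry; its inverse is composition with $T_{a,b}^{-1}$, so $U$ is unitary.

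Next I would compute the action of $U$ on the exponentials indexed by $\frac{1}{a}\Lambda$. For $\eta = \lambda/a$ with $\lambda \in \Lambda$, a direct calculation gives $(U e_{\eta})(x) = e^{2\pi i (\lambda/a)(ax+b)} = e^{2\pi i \lambda b/a}\, e_{\lambda}(x)$, so $U e_{\lambda/a} = c_{\lambda}\, e_{\lambda}$ with $|c_{\lambda}| = 1$. As $\{e_{\lambda} : \lambda \in \Lambda\}$ is an orthonormal basis of $L^2(\mu)$ by hypothesis, the family $\{c_{\lambda} e_{\lambda} : \lambda \in \Lambda\}$ is one as well, and applying the unitary $U^{-1}$ to it shows that $\{e_{\eta} : \eta \in \frac{1}{a}\Lambda\}$ is an orthonormal basis of $L^2(\nu)$. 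This is exactly the assertion that $\nu$ is spectral with spectrum $\frac{1}{a}\Lambda$.

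The argument is essentially routine and I do not expect a genuine obstacle. The only step warranting mild care is the verification that $U$ is unitary: this is a statement about the $L^2$ spaces of two different measures related by a pushforward, and it relies solely on the change-of-variables formula for images of measures rather than on any density, absolute-continuity, or smoothness hypothesis.
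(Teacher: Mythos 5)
Your proof is correct, and it takes a genuinely different route from the paper's. The paper argues entirely on the Fourier side: it uses the identity $\widehat{\nu}(\xi) = e^{-2\pi i b\xi}\,\widehat{\mu}(a\xi)$ for $\nu = \mu\circ T_{a,b}^{-1}$ together with the Jorgensen--Pedersen completeness criterion, namely that $\Lambda$ is a spectrum of $\mu$ if and only if $\sum_{\lambda\in\Lambda} \vert\widehat{\mu}(\xi+\lambda)\vert^{2} = 1$ for every $\xi\in\mathbb{R}$; substituting the transform identity into the sum over $\frac{1}{a}\Lambda$ and reindexing gives the constant value $1$, hence $\frac{1}{a}\Lambda$ is a spectrum of $\nu$. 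You instead build the unitary $U\colon L^{2}(\nu)\to L^{2}(\mu)$, $(Uf)(x) = f(ax+b)$, check via the pushforward change-of-variables formula that it is an isometry with inverse given by composition with $T_{a,b}^{-1}$, and observe that $U e_{\lambda/a} = e^{2\pi i \lambda b/a} e_{\lambda}$, so that the candidate system is carried onto a unimodular rescaling of the given orthonormal basis. Your argument is more self-contained, since it needs only elementary Hilbert space facts and no external characterization of spectra, and it transfers verbatim to higher dimensions and to arbitrary invertible affine maps; the paper's computation is shorter on the page but leans on the $Q(\xi)\equiv 1$ criterion, which is a nontrivial (if standard) result in this area and is the tool the paper uses throughout anyway. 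Both proofs are complete; the only step in yours deserving the care you already gave it is the unitarity of $U$, which indeed follows purely from the image-measure formula.
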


Suppose that $\mu$ is a Borel probability measure on $\R$. Let
	\begin{equation*}
		\mathcal{Z}(\mu)=\{\xi\in\R^d:\widehat{\mu}(\xi+k)=0 \ \text{for all}\ k\in\Z\}.
	\end{equation*}
	We call $\mathcal{Z}(\mu)$ the \emph{integral periodic zero set} of $\mu$. If the infinite convolution $\mu$ is generated by finitely many admissible pairs, the authors in~\cite{LiMiaoWang24} provided the following simple method to show integral periodic zero set of $\mu$ is empty.
\begin{theorem}\label{thm_IPZSE}
Suppose that the sequence of admissible pairs $\{ (N_k, B_k)\}_{k=1}^\f$ is chosen from a finite set of admissible pairs. Let $\mu$ be the infinite convolution given by \eqref{def_ic}. If for each $k \ge 1$,
$$
\mathrm{gcd}\Big( \bigcup_{j=k}^\f (B_j - B_j) \Big) =1,
$$
  then  $\mcal{Z}(\mu)=\emptyset$.
\end{theorem}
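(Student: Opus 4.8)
The plan is to work entirely on the Fourier side. Writing $m_B(\xi)=\frac{1}{\#B}\sum_{b\in B}e^{-2\pi i b\xi}$ and $M_k=N_1\cdots N_k$, we have $\wh{\mu}(\xi)=\prod_{k\ge1}m_{B_k}(\xi/M_k)$ together with the factorization $\wh{\mu}(\xi)=\wh{\mu_k}(\xi)\,\wh{\mu^{(k)}}(\xi/M_k)$, where $\wh{\mu_k}(\xi)=\prod_{j\le k}m_{B_j}(\xi/M_j)$ is $M_k$-periodic (each $m_{B_j}$ is $1$-periodic and $M_k/M_j\in\Z$) and $\mu^{(k)}$ is the tail infinite convolution of $\{(N_j,B_j)\}_{j>k}$. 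First I would record two uniform estimates coming from the finiteness of the pair set: (a) a Lipschitz bound $|1-m_B(s)|\le L|s|$, which gives $|1-\wh{\mu^{(k)}}(\eta)|\le L|\eta|$ for all $k$, so every tail is non-vanishing on a fixed neighbourhood of $0$; and (b) a uniform $\delta_0>0$ (shrunk to also cover (a)) such that every mask $m_B$ is zero-free on $(0,\delta_0)$, obtained by passing to the reduced mask and using that $\gcd(B-B)$ is bounded. Since $\sum_k(1-|m_{B_k}(\xi/M_k)|)<\f$ for each fixed $\xi$, these yield the basic reduction: $\wh{\mu}(\xi)=0$ iff some single factor $m_{B_j}(\xi/M_j)$ vanishes.

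Assume for contradiction that $\mcal{Z}(\mu)\ne\emptyset$ and pick $\xi_0\in\mcal{Z}(\mu)$; as $\mcal{Z}(\mu)$ is $\Z$-periodic I may take $\xi_0\in[0,1)$. The core is a descent to the tails: splitting $n=N_kq+s$ and using the periodicity of $\wh{\mu_k}$, the identity $\wh{\mu}(\xi_0+n)=0$ for all $n$ forces at each level a digit $s_k\in\{0,\dots,N_k-1\}$ and a point $\eta_k=(\eta_{k-1}+s_k)/N_k\in[0,1)$ (so $\eta_k=(\xi_0+t_k)/M_k$ with $t_k\in\Z$, and $\eta_{k-1}=N_k\eta_k\bmod1$) with $m_{B_k}(\eta_k)\ne0$ and $\eta_k\in\mcal{Z}(\mu^{(k)})$. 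The only delicate point is the existence of a ``good'' digit, i.e. that the $N_k$ translates $m_{B_k}((\eta_{k-1}+s)/N_k)$ do not all vanish; this is where admissibility is used, since a Hadamard pair has $B_k$ with distinct residues $\bmod\,N_k$, whence $\sum_{s=0}^{N_k-1}|m_{B_k}((\eta+s)/N_k)|^2=N_k/\#B_k>0$ and a non-vanishing translate survives.

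Next I would extract a bounded-gap phenomenon. Since $\eta_k\in\mcal{Z}(\mu^{(k)})$ gives $\wh{\mu^{(k)}}(\eta_k)=0$ with $\eta_k\in(0,1)$, the reduction forces some factor $m_{B_{\phi(k)}}(\eta_k/(N_{k+1}\cdots N_{\phi(k)}))$ to vanish; by estimate (b) its argument is $\ge\delta_0$, so $N_{k+1}\cdots N_{\phi(k)}\le\eta_k/\delta_0<1/\delta_0$ and hence $0<\phi(k)-k\le C_0:=\log_2(1/\delta_0)$. Thus one of the next $C_0$ masks always vanishes at a controlled point. Because the pairs lie in a finite set, a pigeonhole argument over the finitely many blocks $(N_{k+1},B_{k+1},\dots,N_{\phi(k)},B_{\phi(k)})$ and finitely many mask zeros produces an infinite set $K'$ of levels on which the local configuration is constant; in particular $\eta_k$ takes a single value $\eta^*\in(0,1)$ with $\eta^*\in\mcal{Z}(\mu^{(k)})$ and $m_{B^\flat}(\eta^*)\ne0$ for all $k\in K'$. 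Comparing $\eta^*=(\xi_0+t_{k_1})/M_{k_1}=(\xi_0+t_{k_2})/M_{k_2}$ for two levels of $K'$ gives $\eta^*=(t_{k_2}-t_{k_1})/(M_{k_2}-M_{k_1})$, so $\eta^*=p^*/q^*$ is rational in lowest terms with $q^*\ge2$.

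The last step, which I expect to be the main obstacle, is to contradict this rational periodic zero $\eta^*=p^*/q^*$ using the gcd hypothesis. That $\eta^*\in\mcal{Z}(\mu^{(k)})$ for arbitrarily large $k$ says the $\Z$-periodisation of $e^{-2\pi i\eta^*x}\mu^{(k)}$ vanishes, a ``$\bmod\,q^*$'' cancellation of the tail, and the difficulty is to show this is incompatible with $\gcd\big(\bigcup_{j\ge k}(B_j-B_j)\big)=1$. Concretely, for each prime $\ell\mid q^*$ the gcd condition supplies, arbitrarily far out, a pair $(N_j,B_j)$ whose digit set is not contained in one residue class $\bmod\,\ell$; I would feed such a ``spreading'' pair into the shift identity $0=\wh{\mu^{(k)}}(\eta^*+p)$ to produce an integer shift $p$ at which no factor of the product can vanish, giving $\wh{\mu^{(k)}}(\eta^*+p)\ne0$ and contradicting $\eta^*\in\mcal{Z}(\mu^{(k)})$. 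Equivalently, one checks that the ``bad'' shifts attached to the individual factors form arithmetic progressions whose moduli, under the gcd condition, cannot cover $\Z$, so a good shift survives. This covering/number-theoretic step ruling out the stabilised rational value is the crux; everything before it is routine given the uniform estimates and the Hadamard identity.
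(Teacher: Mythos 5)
A preliminary remark: the paper never proves Theorem~\ref{thm_IPZSE} itself; it quotes it from \cite{LiMiaoWang24}, so there is no in-paper argument to compare yours against and I assess the proposal on its own terms. Up to and including your pigeonhole step, the argument is essentially correct: the factorization $\wh{\mu}(\xi)=\wh{\mu_k}(\xi)\,\wh{\nu_{>k}}(\xi/M_k)$ with $M_k$-periodic first factor, the reduction that $\wh{\mu}(\xi)=0$ iff a single mask vanishes (from $\sum_k\big(1-|m_{B_k}(\xi/M_k)|\big)\le L|\xi|\sum_k 2^{-k}<\f$), the descent producing $\eta_k\in\mathcal{Z}(\nu_{>k})\cap(0,1)$ (the good digit exists because admissibility forces the elements of $B_k$ to be distinct modulo $N_k$, giving $\sum_{s=0}^{N_k-1}|m_{B_k}((\eta+s)/N_k)|^2=N_k/\#B_k>0$), the bounded-gap estimate from the uniform zero-free interval $(0,\delta_0)$, and the consequent observation that each $\eta_k$ lies in a fixed finite set (a bounded product of the $N$'s times a mask zero), so that a single rational value $\eta^*=p^*/q^*$, $q^*\ge 2$, is an integral periodic zero of $\nu_{>k}$ for infinitely many $k$. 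The stabilization to a genuinely constant rational value is a nice device.

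The gap is the final step, and it is fatal rather than technical, because nothing before it uses the hypothesis $\gcd\big(\bigcup_{j\ge k}(B_j-B_j)\big)=1$: the step you leave as a plan therefore carries the entire content of the theorem. That it cannot be a routine covering argument is shown by a one-pair example. Take $(N,B)=(2,\{0,3\})$, which is admissible (take $L=\{0,1\}$) but has $\gcd(B-B)=3$. For $\mu=\delta_{2^{-1}B}*\delta_{2^{-2}B}*\cdots$ one has $m_B(\xi)=0$ iff $\xi\in\tfrac16+\tfrac13\Z$, hence $\wh{\mu}$ vanishes exactly on $\bigcup_{k\ge1}\tfrac{2^{k-1}}{3}(1+2\Z)$, which contains all of $\tfrac13+\Z$ (write $1+3n=2^{v}u$ with $u$ odd); thus $\tfrac13\in\mathcal{Z}(\mu)\neq\emptyset$. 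Your argument runs verbatim on this example --- the descent cycles $\tfrac13\mapsto\tfrac23\mapsto\tfrac13$ and the stabilized value is $\eta^*\in\{\tfrac13,\tfrac23\}$ --- and of course produces no contradiction. The example also refutes the specific mechanism you sketch: there the bad shifts of the $j$-th factor are the $n$ with $1+3n\in 2^{j-1}(1+2\Z)$, a single arithmetic progression modulo $2^j$, and these progressions (distinct moduli, total density exactly $1$) do cover $\Z$. So ``the bad progressions cannot cover $\Z$'' is simply false without the gcd hypothesis; in addition, the natural density bound $\sum_{j}\#\big(Z(m_{B_j})\cap[0,1)\big)/P_j$ can exceed $1$ anyway (e.g.\ $m_{\{0,3\}}$ has three zeros per period), and you never explain how a hypothesis constraining the digit sets $B_j$, rather than the moduli $P_j$, obstructs such a covering. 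Until that arithmetic endgame is actually carried out, the proposal is a correct reduction but not a proof.
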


Let $\mu_k$ and $\mu$ be given by \eqref{def_ick} and \eqref{def_ic}. We write
$$
\mu_{>k} = \delta_{(N_1 N_2 \cdots N_{k+1})^{-1} B_{k+1} } * \delta_{(N_1 N_2 \cdots N_{k+2})^{-1} B_{k+2} } * \cdots,
$$ and it is clear that $\mu = \mu_k * \mu_{>k}$. We define
\begin{equation}\label{nugeqn}
  \nu_{>k}(\;\cdot\;) = \mu_{>k}\Big( \frac{1}{N_1 N_2 \cdots N_k} \; \cdot\; \Big),
\end{equation}
which is equivalent to
 $$
\nu_{>k}=\delta_{N_{k+1}^{-1} B_{k+1}} * \delta_{(N_{k+1} N_{k+2})^{-1} B_{k+2}} * \cdots.
$$

The integral periodic zero sets  of infinite convolutions are closely related to  equi-positive families, which  are  an important tool to study the spectrality of fractal measures with compact support, see  \cite{An-Fu-Lai-2019,Dutkay-Haussermann-Lai-2019} for details. Then  in~\cite{LiMiaoWang24},  it was   generalized   to the current version for infinite convolutions without compact support,   and it shows that integral periodic zero sets provide a sufficient condition for the spectrality of infinite convolutions.

\begin{theorem}\label{thm_ipzs}
  Given a sequence of admissible pairs $\{(N_k,B_k)\}_{k=1}^\f$, suppose that the infinite convolution $\mu$ defined by \eqref{def_ic} exists, and the sequence $\{ \nu_{>k} \}$ is defined by \eqref{nugeqn}.
  If there exists a subsequence $\{\nu_{>k_j}\}$   convergent weakly to $\nu$ with $\mcal{Z}(\nu)=\emptyset$, then $\mu$ is a spectral measure with a spectrum in $\Z$.
\end{theorem}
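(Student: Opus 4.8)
The plan is to exhibit an explicit candidate spectrum inside $\Z$ and verify the Jorgensen--Pedersen completeness criterion. First I would normalize each admissible pair to a Hadamard triple $(N_k,B_k,L_k)$ with $0\in B_k$ and $0\in L_k$, reducing $L_k$ modulo $N_k$ so that $L_k\subseteq\{0,\dots,N_k-1\}$. Then I assemble the level-$k$ set
$$
\Lambda_k=L_1+N_1L_2+\cdots+N_1N_2\cdots N_{k-1}L_k\subseteq\{0,1,\dots,N_1\cdots N_k-1\},
$$
which is a genuine spectrum of the finite convolution $\mu_k$, so $\sum_{a\in\Lambda_k}|\widehat{\mu_k}(\xi+a)|^2\equiv 1$. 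Since $0\in L_k$ these are nested, and I put $\Lambda=\bigcup_k\Lambda_k\subseteq\Z$. Orthonormality of $E(\Lambda)$ in $L^2(\mu)$ is then immediate: any two distinct $\lambda,\lambda'\in\Lambda$ lie in a common $\Lambda_k$, whence $\widehat{\mu_k}(\lambda-\lambda')=0$, and the factorization $\widehat\mu=\widehat{\mu_k}\,\widehat{\mu_{>k}}$ forces $\widehat\mu(\lambda-\lambda')=0$. By the Jorgensen--Pedersen criterion it then remains to prove that $Q_\Lambda(\xi):=\sum_{\lambda\in\Lambda}|\widehat\mu(\xi+\lambda)|^2\equiv 1$.

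The engine of the argument is a self-consistency identity. Writing each $\lambda\in\Lambda$ uniquely as $a+N_1\cdots N_k\,c$ with $a\in\Lambda_k$ and $c\in\Lambda_{>k}$ (the spectrum candidate of the normalized tail $\nu_{>k}$ from \eqref{nugeqn}), and using both that every mask $m_{B_j}$ is $\Z$-periodic and that $\widehat{\mu_{>k}}(\eta)=\widehat{\nu_{>k}}\big(\eta/(N_1\cdots N_k)\big)$, I would obtain, for every $k$,
$$
Q_\Lambda(\xi)=\sum_{a\in\Lambda_k}|\widehat{\mu_k}(\xi+a)|^2\,P_{>k}\!\Big(\tfrac{\xi+a}{N_1\cdots N_k}\Big),\qquad P_{>k}(\eta):=\sum_{c\in\Lambda_{>k}}|\widehat{\nu_{>k}}(\eta+c)|^2.
$$
Because $\sum_{a\in\Lambda_k}|\widehat{\mu_k}(\xi+a)|^2=1$ and $P_{>k}\le 1$, this realizes $Q_\Lambda(\xi)$ as a convex combination of tail values, re-proving $Q_\Lambda\le 1$ and producing a clean defect identity that I intend to propagate through the levels.

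Next I would convert $\mathcal{Z}(\nu)=\emptyset$ into a uniform positivity statement. For each $t\in[0,1]$ some integer translate $\widehat\nu(t+c_t)$ is nonzero, so continuity of $\widehat\nu$ and compactness of $[0,1]$ yield a finite set $F\subseteq\Z$ and $\epsilon_0>0$ with $\max_{c\in F}|\widehat\nu(s+c)|\ge\epsilon_0$ for all $s\in\R$. Since $\nu_{>k_j}\to\nu$ weakly, Lemma~\ref{lemma_weak_convergence_equivalent_statement}, strengthened by the standard uniform-on-compacts convergence of characteristic functions of a weakly convergent (hence tight) family, gives $\widehat{\nu_{>k_j}}\to\widehat\nu$ uniformly on the compact set $[0,1]+F$; thus the same positivity holds for $\nu_{>k_j}$ with constant $\epsilon_0/2$ once $j$ is large. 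This is precisely the equi-positivity of the tail family that the completeness step must consume.

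The crux, and the step I expect to be hardest, is turning this positivity into $Q_\Lambda\equiv 1$. I would argue by contradiction: if $Q_\Lambda(\xi_0)\le 1-\delta$ for some $\delta>0$, the defect identity forces at each level $k$ a choice $a^{(k)}\in\Lambda_k$ with $\widehat{\mu_k}(\xi_0+a^{(k)})\ne 0$ and $P_{>k}(\eta^{(k)})\le 1-\delta$, where $\eta^{(k)}=(\xi_0+a^{(k)})/(N_1\cdots N_k)$ lies in a fixed bounded interval because $\max\Lambda_k<N_1\cdots N_k$. Passing to the subsequence $k_j$ and a convergent $\eta^{(k_j)}\to\eta^*$, I want to contradict the equi-positivity of $\nu_{>k_j}$ at $\eta^*$. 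The genuine difficulty is that the deficit is a statement about the sum $P_{>k}$ over the \emph{proper} subset $\Lambda_{>k}\subseteq\Z$, whereas the positivity furnished by $\mathcal{Z}(\nu)=\emptyset$ places its witnessing frequencies anywhere in $\Z$; reconciling the two requires showing that the descending tree of the spectrum eventually makes the needed integer translates accessible inside $\Lambda_{>k_j}$, so that a persistent deficit cannot survive in the weak limit. This tree-bookkeeping, which is the heart of the equi-positivity completeness mechanism of An--Fu--Lai and Dutkay--Haussermann--Lai, is where the real work lies; the remaining ingredients (nestedness, $\Z$-periodicity of the masks, and the convex-combination identity) are routine. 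Once $Q_\Lambda\equiv 1$ is established, $E(\Lambda)$ is an orthonormal basis and $\mu$ is spectral with spectrum $\Lambda\subseteq\Z$.
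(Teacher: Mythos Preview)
The paper does not actually prove Theorem~\ref{thm_ipzs}; it is quoted verbatim from \cite{LiMiaoWang24} as a preliminary result, so there is no in-paper argument to compare your proposal against. Your outline is precisely the standard route taken in that reference and its predecessors (An--Fu--Lai, Dutkay--Haussermann--Lai, Li--Miao--Wang): build nested finite spectra $\Lambda_k$ from Hadamard triples, pass to $\Lambda=\bigcup_k\Lambda_k\subseteq\Z$, get orthogonality for free, convert $\mathcal Z(\nu)=\emptyset$ into equi-positivity of the tails $\{\nu_{>k_j}\}$, and then run the Jorgensen--Pedersen completeness argument. The convex-combination identity you wrote for $Q_\Lambda$ is correct and is exactly the engine used in those papers.

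One substantive caution about the step you flag as the crux. Your contradiction scheme produces, for each $k_j$, a point $\eta^{(k_j)}$ in a fixed bounded interval with $P_{>k_j}(\eta^{(k_j)})\le 1-\delta$, and you hope this clashes with equi-positivity at a limit point $\eta^*$. But equi-positivity only guarantees that \emph{some} integer translate $c\in F$ makes $|\widehat{\nu_{>k_j}}(\eta+c)|$ large; it says nothing about $P_{>k_j}$, which sums over $\Lambda_{>k_j}$ and need not contain $F$. Your proposed resolution, ``show that the descending tree eventually makes the needed integer translates accessible inside $\Lambda_{>k_j}$,'' is not how the cited proofs close this gap. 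What they do instead is abandon the canonical normalization $L_k\subseteq\{0,\dots,N_k-1\}$ and build the spectrum \emph{adaptively}: at each level $k_j$ one is free to replace a block of $L$'s by an integer translate (still a Hadamard triple, still containing $0$), and the translate is chosen, branch by branch along the tree, so that the equi-positivity witness $c_\lambda\in F$ is absorbed into the next-level spectrum for that branch. This yields an inequality of the shape $1-Q_{k_{j+1}}(\xi_0)\le (1-\epsilon_0^2)\bigl(1-Q_{k_j}(\xi_0)\bigr)$, forcing $Q_\Lambda(\xi_0)=1$. So the ``tree-bookkeeping'' is not merely bookkeeping for your fixed $\Lambda$; it is a genuine modification of the candidate spectrum, and you should not expect your canonical $\Lambda$ to be the one the proof ultimately certifies.
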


	\section{Random Measures}

	\subsection{Characteristic Functions and Baire Functions}
	We recall some definitions and conclusions from probability theory which are used in our proofs. 	
	
	Let $X$ be a non-empty set and  $D$  a non-empty collection of some subsets of $X$.
	We say $D$ is \emph{a $\lambda$-system on $X$} if it satisfies the following
	
	$(\rmnum{1})$ $X \in D$;
	
	$(\rmnum{2})$ if $A, B\in D$ with $A\subseteq B$, then $B\setminus A \in D$;
	
	$(\rmnum{3})$ if $A_{1} \subseteq A_{2} \subseteq A_{3} \subseteq \dots $ is an increasing sequence of sets in $D$, then $\bigcup_{n=1}^{\infty}A_{n} \in D$. \\
	Let $P$ be  a non-empty collection of some subsets of $X$. We say  $P$ is \emph{a $\pi$-system on $X$} if $A \cap B \in P$ for all $A, B\in P$. We write  $\sigma(P) $ for the $\sigma$-algebra generated by $P$. The following conclusion is standard in probability theory, and we refer readers to \cite{Kall} for details.
	\begin{theorem}\label{thm_dynkin's}
		If $P$ is a $\pi$-system and $D$ is a $\lambda$-system with $P \subseteq D$, then $\sigma(P) \subseteq D$.
	\end{theorem}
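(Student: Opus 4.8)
The plan is to prove the classical Dynkin $\pi$-$\lambda$ theorem by constructing the smallest $\lambda$-system containing $P$ and showing that it is in fact a $\sigma$-algebra. First I would let $\mathcal{D}_0$ denote the intersection of all $\lambda$-systems on $X$ that contain $P$. This family is non-empty because $D$ is one such system, and a routine check shows that an arbitrary intersection of $\lambda$-systems is again a $\lambda$-system; hence $\mathcal{D}_0$ is the smallest $\lambda$-system containing $P$. By minimality we immediately obtain $\mathcal{D}_0 \subseteq D$, so it suffices to prove $\sigma(P) \subseteq \mathcal{D}_0$. Since $P \subseteq \mathcal{D}_0$, this in turn reduces to showing that $\mathcal{D}_0$ is itself a $\sigma$-algebra.

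The reduction I would exploit next is that a collection which is simultaneously a $\lambda$-system and a $\pi$-system is automatically a $\sigma$-algebra: property $(\rmnum{1})$ gives $X \in \mathcal{D}_0$; property $(\rmnum{2})$ applied with $A \subseteq X$ gives closure under complements; closure under finite intersections together with De Morgan's laws yields closure under finite unions; and property $(\rmnum{3})$ upgrades finite unions to countable unions by passing to the increasing sequence of partial unions. Thus the whole theorem collapses to the single assertion that $\mathcal{D}_0$ is closed under pairwise intersection.

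To establish that closure I would run the standard two-layer bootstrap, using that intersection is symmetric. For each $A \in \mathcal{D}_0$ introduce the auxiliary family $\mathcal{G}_A = \{ B \in \mathcal{D}_0 : A \cap B \in \mathcal{D}_0 \}$, and verify directly that $\mathcal{G}_A$ is again a $\lambda$-system: property $(\rmnum{1})$ follows from $A \cap X = A \in \mathcal{D}_0$, property $(\rmnum{2})$ from the identity $A \cap (B_2 \setminus B_1) = (A \cap B_2) \setminus (A \cap B_1)$ for $B_1 \subseteq B_2$, and property $(\rmnum{3})$ from $A \cap \bigcup_n B_n = \bigcup_n (A \cap B_n)$ for increasing $B_n$. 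In the first layer I take $A \in P$: because $P$ is a $\pi$-system, $A \cap B \in P \subseteq \mathcal{D}_0$ for every $B \in P$, so $P \subseteq \mathcal{G}_A$, and minimality forces $\mathcal{D}_0 \subseteq \mathcal{G}_A$. This says $A \cap B \in \mathcal{D}_0$ whenever $A \in P$ and $B \in \mathcal{D}_0$. In the second layer I fix $B \in \mathcal{D}_0$ and form $\mathcal{G}_B$ analogously; the conclusion of the first layer is exactly the statement that $P \subseteq \mathcal{G}_B$, so minimality again gives $\mathcal{D}_0 \subseteq \mathcal{G}_B$, which means $A \cap B \in \mathcal{D}_0$ for all $A, B \in \mathcal{D}_0$.

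I expect the main subtlety to be organizational rather than computational. The genuinely essential point is the two-stage use of minimality: the $\pi$-system hypothesis on $P$ is invoked only in the first layer, and the information it produces is then fed into the second layer by re-running the identical $\lambda$-system verification with a member of $\mathcal{D}_0$ in place of a member of $P$. The set-theoretic identities underlying the claim that each $\mathcal{G}_A$ is a $\lambda$-system are elementary, so no real obstacle arises there; the only care needed is to keep the order of the two applications of minimality straight, since the argument would circularly fail if one attempted both intersections in a single step.
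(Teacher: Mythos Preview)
Your proof is correct and is precisely the classical argument for Dynkin's $\pi$--$\lambda$ theorem. Note, however, that the paper does not actually prove this statement: it is stated as a standard result in probability theory with a reference to Kallenberg~\cite{Kall}, so there is no ``paper's own proof'' to compare against. Your write-up matches the textbook proof one would find in that reference.
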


	To introduce the Baire hierarchy of Borel measurable functions on a metric space, we need some basic facts of ordinals, and we refer readers to \cite{BBT08, Kechris95} for details.
	\begin{theorem}
		There exists an uncountable, well-ordered set $\mathrm{ORD}$ with an order relation $<$ so that \\
		$(\rmnum{1})$ $\mathrm{ORD}$ has a last element denoted by $\omega_1$. \\
		$(\rmnum{2})$ For every $\alpha_0\in \mathrm{ORD}$ with $\alpha_0 \neq \omega_1$, the set $\{\alpha \in \mathrm{ORD} : \alpha <\alpha_0\}$ is countable.\\
		$(\rmnum{3})$There is an element $\omega\in\mathrm{ORD}$  such that
		$$
		\{ \alpha \in \mathrm{ORD} : \alpha<\omega \}=\{0,1,2,3,\ldots\}
		$$
		and $<$ has its usual meaning in the set of nonnegative integers.
	\end{theorem}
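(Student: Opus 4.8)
The plan is to recognise the statement as the classical existence of the first uncountable ordinal $\omega_1$, and to realise $\mathrm{ORD}$ as the well-ordered set of all ordinals up to and including it. Concretely, I would first invoke the well-ordering theorem to produce an uncountable set $W$ carrying a well-ordering $\prec$; such a $W$ exists since the well-ordering theorem applies, for instance, to $\mathbb{R}$. The strategy is then to truncate $W$ to its shortest uncountable initial part, adjoin a single new top element to serve as $\omega_1$, and finally relabel the bottom $\omega$ elements as $0,1,2,\ldots$ to secure property (iii).

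For the truncation, I would consider
\[
S = \{ w \in W : \{ v \in W : v \prec w \} \text{ is uncountable} \}.
\]
If $S \neq \emptyset$, then, $\prec$ being a well-ordering, $S$ has a $\prec$-least element $w^{*}$, and I set $W_0 = \{ v \in W : v \prec w^{*} \}$; by the choice of $w^{*}$ this set is uncountable, while for every $v \prec w^{*}$ the minimality of $w^{*}$ forces the initial segment $\{u : u \prec v\}$ to be countable. If $S = \emptyset$, I simply put $W_0 = W$, which is itself uncountable with every proper initial segment countable. In either case $(W_0,\prec)$ is an uncountable well-ordered set in which every proper initial segment is countable.

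Next I adjoin a new point $*$ not in $W_0$, set $\mathrm{ORD} = W_0 \cup \{*\}$, and extend $\prec$ by declaring $v \prec *$ for all $v \in W_0$. Putting $\omega_1 = *$ gives property (i), since $*$ is the $\prec$-largest element. Adjoining a top element preserves well-ordering (a nonempty subset meeting $W_0$ has a least element inside $W_0$, and $\{*\}$ has least element $*$), so $\mathrm{ORD}$ is a well-ordered uncountable set. Property (ii) is then immediate: for $\alpha_0 \neq *$ we have $\alpha_0 \in W_0$ and $\{\alpha : \alpha \prec \alpha_0\}$ is a proper initial segment of $W_0$, hence countable. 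For property (iii), $W_0$ is infinite and well-ordered, so recursively extracting least elements yields $e_0 \prec e_1 \prec e_2 \prec \cdots$; since $W_0$ is uncountable, $W_0 \setminus \{e_0, e_1, \ldots\}$ is nonempty and has a least element, which I name $\omega$. Then $\{\alpha : \alpha \prec \omega\} = \{e_0, e_1, e_2, \ldots\}$, and identifying each $e_n$ with the integer $n$ realises this initial segment as $\{0,1,2,\ldots\}$ with $\prec$ restricting to the usual order.

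The only genuine content lies in the truncation step, where the point to get right is the case distinction together with the verification that $W_0$ is uncountable: in the case $S \neq \emptyset$ this is precisely what membership of $w^{*}$ in $S$ asserts, while the minimality of $w^{*}$ is exactly what makes all proper initial segments of $W_0$ countable. The construction rests on the well-ordering theorem (equivalently, the axiom of choice), which is the sole nontrivial external input; if an \textsc{ac}-free route is preferred, one may instead take $W$ to be the Hartogs number of $\mathbb{N}$, which furnishes an uncountable well-ordered set directly. Everything else—preservation of well-ordering under restriction to an initial segment and under adjoining a top element, and the extraction of the first $\omega$ elements—is routine, and I expect no serious obstacle beyond carefully treating the easily overlooked but trivial case $S = \emptyset$.
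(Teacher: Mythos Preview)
Your construction is correct and is essentially the classical argument for producing the first uncountable ordinal: well-order an uncountable set, truncate at the first point with uncountably many predecessors, adjoin a top element, and identify the initial $\omega$ segment with $\mathbb{N}$. The case split on whether $S$ is empty is handled properly, and the remark about Hartogs' number as an \textsc{ac}-free alternative is apt.

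However, the paper does not actually prove this theorem. It is stated as a background fact with the sentence ``we need some basic facts of ordinals, and we refer readers to \cite{BBT08, Kechris95} for details,'' and no proof is given. So there is no ``paper's own proof'' to compare against; your proposal simply supplies what the paper outsources to the references. If anything, your write-up is more detailed than what the cited texts typically present, since they usually work directly with von Neumann ordinals and define $\omega_1$ as the set of all countable ordinals, whereas you give the more hands-on truncation argument that avoids ordinal machinery.
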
	
	
	We may regard $\mathrm{ORD} $ as a long list starting with 0 and continuing just until uncountably many elements have been listed:
	$$
	0 <1<2<\cdots <\omega <\omega+1<\omega+2< \cdots <\omega^2<\omega^2+1<\cdots <\omega_1.
	$$
	We call all the elements of $\mathrm{ORD} $ {\it ordinals}. Each element prior to $\omega$ is called a {\it finite ordinal}. Each
	element from then, but prior to the last one $\omega_1$, is called a countable ordinal. The element  $\omega_1$ is
	called the {\it first uncountable ordinal}. Any element that does not have an immediate predecessor is called a {\it limit ordinal}.

	Let $(X,d)$ be a metric space. We write
$$
B_{0}(X)=\{f:X\to [0,1];\ f\ \text{is continuous}\}.
$$
	For each countable ordinal $\alpha\in \mathrm{ORD}  $, we define $B_{\alpha}(X)$ inductively as follows: If $\alpha$ is a successor ordinal, $B_{\alpha}(X)$ is the set of all limits of pointwisely convergent sequences in $B_{\alpha-1\emph{}}(X)$; If $\alpha$ is a limit ordinal, we write  $B_{\alpha}(X) = \bigcup_{\beta < \alpha} B_{\beta}(X)$. Functions in $B_{\alpha}(X)$ are said to be of \emph{Baire class $\alpha$} on $X$.
	Let
	\begin{equation}\label{def_Baire_functions}
		\mathrm{Ba}(X) = \bigcup_{\alpha < \omega_{1}} B_{\alpha}(X).
	\end{equation}	
	The  class $\mathrm{Ba}(X)$  is the smallest set of real-valued functions containing all continuous functions whose ranges are contained in $[0,1]$ and is closed under pointwise convergence. Every element of $\mathrm{Ba}(X)$ is called a \emph{Baire function}. Baire functions were first studied by  Ren\'e Baire\cite{Baire-1899}, and we refer readers to \cite{Folla99, Kechris95, Mauldin74} for details.

	Given  $B \in \mathcal{B}(X)$, we write $\chi_{B} $ for the characteristic function, that is, $\chi_{B} (x)$ is equal to $1$ when $x\in B$ and equal to $0$ when $x\notin B$. The following conclusion shows that 	all characteristic functions generated by Borel sets are Baire functions. This conclusion should be contained in some literature, but we did not find a proper reference for it. For the readers' convenience, we include a proof here.
	\begin{theorem}\label{thm_chi_B_Baire_function}
		Suppose that $(X,d_X)$ is a metric space and $\mathrm{Ba}(X)$ is given by \eqref{def_Baire_functions}. 
		Then $ \chi_{B} \in \mathrm{Ba}(X)$  for all $B \in \mathcal{B}(X)$.
	\end{theorem}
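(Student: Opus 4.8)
The plan is to prove Theorem~\ref{thm_chi_B_Baire_function} by a $\pi$--$\lambda$ argument based on Theorem~\ref{thm_dynkin's}. Set
\[
\mathcal{D} = \{ B \in \mathcal{B}(X) : \chi_B \in \mathrm{Ba}(X) \}.
\]
The collection $P$ of all open subsets of $X$ is a $\pi$-system, since finite intersections of open sets are open, and it generates the Borel $\sigma$-algebra, so $\sigma(P) = \mathcal{B}(X)$. Hence it suffices to verify that $P \subseteq \mathcal{D}$ and that $\mathcal{D}$ is a $\lambda$-system; Theorem~\ref{thm_dynkin's} will then give $\mathcal{B}(X) = \sigma(P) \subseteq \mathcal{D}$, which is exactly the assertion that $\chi_B \in \mathrm{Ba}(X)$ for every $B \in \mathcal{B}(X)$.

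First I would handle open sets. For open $U \subseteq X$ with $U \neq X$, the complement $X \setminus U$ is nonempty and closed, so $d_X(x, X \setminus U) = 0$ precisely when $x \notin U$. Setting
\[
f_n(x) = \min\{ 1, n\, d_X(x, X \setminus U) \},
\]
each $f_n$ is continuous with range in $[0,1]$, i.e. $f_n \in B_0(X)$, and $f_n(x) \to \chi_U(x)$ pointwise as $n \to \infty$. Since $\mathrm{Ba}(X)$ contains $B_0(X)$ and is closed under pointwise limits, $\chi_U \in \mathrm{Ba}(X)$; the case $U = X$ is trivial because $\chi_X \equiv 1$ is continuous. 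This shows $P \subseteq \mathcal{D}$.

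Next I would record a closure property of $\mathrm{Ba}(X)$, namely that it is closed under the maps $f \mapsto 1 - f$ and $(f,g) \mapsto fg$, both of which preserve the value range $[0,1]$. For the product this follows by applying the minimality characterization of $\mathrm{Ba}(X)$ in \eqref{def_Baire_functions} twice. Fixing a continuous $g \in B_0(X)$, the family $\{ f \in \mathrm{Ba}(X) : fg \in \mathrm{Ba}(X) \}$ contains $B_0(X)$ and is closed under pointwise limits, hence equals $\mathrm{Ba}(X)$; then, fixing an arbitrary $f \in \mathrm{Ba}(X)$, the family $\{ g \in \mathrm{Ba}(X) : fg \in \mathrm{Ba}(X) \}$ contains $B_0(X)$ by the previous step and is again closed under pointwise limits, hence equals $\mathrm{Ba}(X)$. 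The map $f \mapsto 1-f$ is treated analogously by a single such application. With these facts the $\lambda$-system axioms are immediate: $X \in \mathcal{D}$ because $\chi_X \equiv 1 \in B_0(X)$; if $A, B \in \mathcal{D}$ with $A \subseteq B$, then $\chi_{B \setminus A} = \chi_B\,(1 - \chi_A) \in \mathrm{Ba}(X)$, so $B \setminus A \in \mathcal{D}$; and if $A_1 \subseteq A_2 \subseteq \cdots$ lie in $\mathcal{D}$ with union $A$, then $\chi_{A_n} \to \chi_A$ pointwise, whence $\chi_A \in \mathrm{Ba}(X)$ and $A \in \mathcal{D}$.

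Combining these steps, $\mathcal{D}$ is a $\lambda$-system containing the $\pi$-system $P$, so Theorem~\ref{thm_dynkin's} yields $\mathcal{B}(X) \subseteq \mathcal{D}$, completing the proof. The only genuinely delicate point is the closure of $\mathrm{Ba}(X)$ under products, needed for the set-difference axiom: because the Baire hierarchy here is built from $[0,1]$-valued functions and one cannot freely subtract, the cleanest route is the double application of the minimality characterization of $\mathrm{Ba}(X)$ rather than an explicit transfinite induction over the Baire classes $B_\alpha(X)$.
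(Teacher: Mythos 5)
Your proposal is correct, and it shares the paper's overall skeleton: both define the class of Borel sets whose characteristic functions are Baire functions, show it contains the open sets, verify it is a $\lambda$-system, and invoke Theorem~\ref{thm_dynkin's}. The genuine difference lies in how the $\lambda$-system axioms are verified. The paper argues concretely with approximating sequences: for $A\subseteq B$ in the class it takes continuous $f_n\to\chi_A$, $g_n\to\chi_B$ and forms $\min\{\max\{g_n-f_n,0\},1\}$, and for increasing unions it uses a diagonal maximum $\max_{1\le i\le n}f_{i,n}$. This tacitly assumes that the characteristic functions of sets in the class are pointwise limits of \emph{continuous} functions, i.e.\ of Baire class one, whereas membership in the class only guarantees they lie somewhere in the transfinite hierarchy $\bigcup_{\alpha<\omega_1}B_\alpha(X)$; as written, the paper's verification of axioms $(\rmnum{2})$ and $(\rmnum{3})$ does not cover sets produced at higher Baire classes. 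Your route avoids this entirely: by bootstrapping the minimality characterization of $\mathrm{Ba}(X)$ (twice for products, once for $f\mapsto 1-f$), you get closure of the \emph{full} class $\mathrm{Ba}(X)$ under $f\mapsto 1-f$ and $(f,g)\mapsto fg$, after which $\chi_{B\setminus A}=\chi_B(1-\chi_A)$ and $\chi_{A_n}\to\chi_{\bigcup_n A_n}$ dispose of the axioms at any level of the hierarchy. What your approach buys is rigor and uniformity across Baire classes, at the cost of leaning on the minimality characterization and closure under pointwise limits of $\mathrm{Ba}(X)$ as stated after \eqref{def_Baire_functions}; the paper's approach is more hands-on but, in the form given, only works level by level and would need either your algebraic closure properties or a transfinite induction to be complete. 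Your simpler approximation $\min\{1,\,n\,d_X(x,X\setminus U)\}$ for open sets is also perfectly adequate and sidesteps the paper's auxiliary sets $F_n$ and truncated metric.
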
 
	
	\begin{proof}
		We  write  $\mathcal{G} = \left\{ B\in\mathcal{B}(X) : \chi_{B} \in \mathrm{Ba}(X)\right\}$, it is sufficient to  show that  $\mathcal{B}(X)\subseteq \mathcal{G}$.
		
		First, we claim that $\mathcal{G}$ contains all open sets. Let $d$ be a metric on $X$ given by
$$
d(x,y) = \min\{ d_X(x, y), 1 \},
$$
 for all $x, y \in X$. Given  a subset $A\subseteq X$, 		for all $x\in X$, we write
		\begin{equation*}
			d(x,A) = \begin{cases}
				\inf_{y\in A} d(x,y), &\text{ if } A\neq \emptyset ;\\
				1, &\text{ if } A=\emptyset.
			\end{cases}
		\end{equation*}
		For every given subset $A$, it is clear that the mapping $x\mapsto d(x, A)$ is continuous  since
		\begin{equation*}
			\vert d(x, A) - d(x', A) \vert \leq d(x, x'),
		\end{equation*}
		for all $x, x' \in X$.
		
		Fix an open set $U\subseteq X$,  and for every integer $n>0$, we write  that
		$$
		F_{n} = \{ x\in X : d(x,U^{c})\geq \frac{1}{n}\}.
		$$
		It is clear that  $F_{n}\subseteq X$ is closed and $F_{n}\subseteq F_{n+1}$ for each $n\geq 1$.
		Let
		\begin{equation*}
			f_{n}(x) = \dfrac{d(x,U^{c})}{d(x,U^{c}) + d(x, F_{n})}.
		\end{equation*}
		It is obvious that $f_{n}$ is a continuous function satisfying $0\leq f_{n} \leq 1$ for each $n$, and  we have $f_n\in B_{0}(X)$. 		Since $\{F_{n}\}_{n=1}^\infty$ is increasing and $U = \bigcup_{n=1}^{\infty} F_{n}$,   the sequence $\{f_{n}(x)\}_{n=1}^\infty$ converges pointwisely and monotonically to $\chi_{U}$. This implies that $\chi_{U}\in  \mathrm{Ba}(X)$ for every open subset $U\subset X$, hence $\mathcal{G}$ contains all open subset of $X$.

		Next, we claim  that  $\mathcal{G}$ is a $\lambda$-system.  It is equivalent to verify the three conditions in the definition of $\lambda$-system.
		
		$(\rmnum{1})$ It is clear that  $X \in \mathcal{G}$ since $\mathcal{G}$ contains all open sets.
		
		$(\rmnum{2})$ Given $A, B \in \mathcal{G}$ such that $A \subseteq B$, let  $\{f_{n}\}_{n=1}^{\infty}$ and $\{g_{n}\}_{n=1}^{\infty}$ be the sequences of continuous functions pointwisely convergent  to $\chi_{A}$ and $\chi_{B}$, respectively. Obviously, $\{g_{n} - f_{n}\}_{n=1}^{\infty}$ is a sequence of continuous functions pointwisely convergent to $\chi_{B\setminus A}$. Since  $\{\min\{ \max \{g_{n} - f_{n}, 0\}, 1\}\}_{n=1}^{\infty}$ is still a sequence of continuous functions  pointwisely  convergent to $\chi_{B\setminus A}$, we have  $\chi_{B\setminus A}\in  \mathrm{Ba}(X)$, and it implies that   $B\setminus A \in \mathcal{G}$.
		
		$(\rmnum{3})$ Suppose that $\{A_{n}\}_{n=1}^{\infty}$ is an increasing sequence of subsets in $\mathcal{G}$. For each $n\geq 1$,  there exists a sequence of continuous functions $\{f_{n,m}\}_{m=1}^{\infty}$  pointwisely convergent to $\chi_{A_{n}}$ with $f_{n,m}(X) \subseteq [0,1]$. We write
		\begin{equation*}
			f_{n}(x) = \max_{1\leq i \leq n} f_{i,n}(x),
		\end{equation*}
and it is clear that $f_n$ is continuous with  $f_{n}(X) \subseteq [0,1]$, and the sequence $\{f_{n}\}_{n=1}^{\infty}$ converges pointwisely to $\chi_{\cup_{n=1}^{\infty}A_{n}}$. Hence $\chi_{\cup_{n=1}^{\infty}A_{n}}\in \mathrm{Ba}(X) $, and it follows that   $\bigcup_{n=1}^{\infty}A_{n} \in \mathcal{G}$.
		
		Therefore $\mathcal{G}$ is a $\lambda$-system. Since all open subsets in $X$ form a $\pi$-system and $\mathcal{B}(X) = \sigma(\{ U: U\subseteq  X\text{ is open} \})$, by Theorem~\ref{thm_dynkin's}, we obtain that
		$$
		\mathcal{B}(X) = \sigma(\{ U: U\subseteq  X \text{ is open} \}) \subseteq \mathcal{G},
		$$
		which completes the proof.
	\end{proof}

	\subsection{Borel Measurability of Random Measures}
	\label{sec_measurability}
	Let $\mathcal{P}(\mathbb{R})$ denote the collection of all Borel probability measures on $\mathbb{R}$, and let $\mathcal{T}_{w}$ be the weak topology on $\mathcal{P}(\mathbb{R})$.

	Given a sequence $\{(N_k,B_k)\}_{k=1}^\f$ where $N_k\ge2$, $B_k\subseteq [0,+\f)$ is finite for all $k>0$ and a sequence of positive integers $\bn=\{ n_{k}\}_{k=1}^\f$,  recall that the mappings   $\mu_k^{\mathbf{n}}$  and $\mu^{\mathbf{n}}$  are defined respectively by
	\begin{equation}\label{Redef_rmk}
		\mu_{k}^{\mathbf{n}}(\boldsymbol{\omega}, B) = \mu_{k}^{\mathbf{n}}(\boldsymbol{\omega})(B) = \delta_{N_{\omega_{1}}^{-n_{1}}B_{\omega_{1}}} * \delta_{N_{\omega_{1}}^{-n_{1}}N_{\omega_{2}}^{-n_{2}}B_{\omega_{2}}} *  \dots * \delta_{N_{\omega_{1}}^{-n_{1}}N_{\omega_{2}}^{-n_{2}}\cdots N_{\omega_{k}}^{-n_{k}}B_{\omega_{k}}}(B),
	\end{equation}
	and
	\begin{equation} \label{Redef_rm}
		\mu^{\mathbf{n}}(\boldsymbol{\omega}, B)= \mu^{\mathbf{n}}(\boldsymbol{\omega})(B)=\delta_{N_{\omega_{1}}^{-n_{1}}B_{\omega_{1}}} * \delta_{N_{\omega_{1}}^{-n_{1}}N_{\omega_{2}}^{-n_{2}}B_{\omega_{2}}} * \dots (B),
	\end{equation}
	for all $\boldsymbol{\omega} \in \Omega$ and all Borel sets $B \in \mathcal{B}(\mathbb{R})$. We assume that $\mu(\boldsymbol{\omega})$ exists for every $\boldsymbol{\omega}\in\Omega$ and recall that
	\begin{equation} \label{Redef_setPhi}
		\Phi = \{ \mu(\boldsymbol{\omega}): \boldsymbol{\omega} \in \Omega \}.
	\end{equation}	

We define a mapping $\phi: (\Omega, d) \to (\mathcal{P}(\mathbb{R}), \mathcal{T}_{w})$ by
	\begin{equation} \label{def_phi}
		\phi (\boldsymbol{\omega}) = \mu^{\mathbf{n}}(\boldsymbol{\omega}),
	\end{equation}
	where $\mu^{\mathbf{n}}(\boldsymbol{\omega})$ is given by \eqref{Redef_rm}. This mapping plays an important role in the measurability of random measures, and we show it is continuous in the next conclusion.
	\begin{lemma}\label{lemma_continuous_mapping_from_symbol_space_to_Prob_space}
		 If $\Phi$ is tight, then $\phi$ given by \eqref{def_phi} is continuous.
	\end{lemma}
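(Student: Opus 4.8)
The plan is to exploit the metrizability of the weak topology, which reduces the continuity of $\phi$ to sequential continuity, and then to verify weak convergence through pointwise convergence of Fourier transforms by Lemma~\ref{lemma_weak_convergence_equivalent_statement}. Concretely, I would fix a sequence $\boldsymbol{\omega}^{(j)} \to \boldsymbol{\omega}$ in $(\Omega, d)$, which by the definition of $d$ means $|\boldsymbol{\omega}^{(j)}\wedge\boldsymbol{\omega}| \to \infty$, and aim to show $\widehat{\mu^{\bn}(\boldsymbol{\omega}^{(j)})}(\xi) \to \widehat{\mu^{\bn}(\boldsymbol{\omega})}(\xi)$ for every $\xi \in \mathbb{R}$. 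The basic device is the factorization $\mu^{\bn}(\boldsymbol{\omega}) = \mu^{\bn}_k(\boldsymbol{\omega}) * \tau_k(\boldsymbol{\omega})$, justified by Lemma~\ref{lem_factorization}, where $\tau_k(\boldsymbol{\omega}) = \delta_{N_{\omega_1}^{-n_1}\cdots N_{\omega_{k+1}}^{-n_{k+1}} B_{\omega_{k+1}}} * \cdots$ is the tail. Writing $a_k(\boldsymbol{\omega}) = N_{\omega_1}^{n_1}\cdots N_{\omega_k}^{n_k} \geq 2^k$, a change of scale gives $\widehat{\tau_k(\boldsymbol{\omega})}(\xi) = \widehat{\rho_k(\boldsymbol{\omega})}(\xi/a_k(\boldsymbol{\omega}))$, where $\rho_k(\boldsymbol{\omega}) = \delta_{N_{\omega_{k+1}}^{-n_{k+1}}B_{\omega_{k+1}}} * \cdots$ is again an infinite convolution of the same type, with digit string $\omega_{k+1}\omega_{k+2}\cdots$ and exponent string $n_{k+1}n_{k+2}\cdots$, whose existence is guaranteed by Proposition~\ref{prop_tightness_guarantees_existence}.

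The crux is to show that $\widehat{\tau_k(\boldsymbol{\omega})}(\xi) \to 1$ as $k \to \infty$, uniformly in $\boldsymbol{\omega}$. I would first upgrade the tightness of $\Phi$ to tightness of the whole family $\{\mu^{\mathbf{m}}(\boldsymbol{\eta}) : \mathbf{m}\text{ a positive integer sequence},\ \boldsymbol{\eta}\in\Omega\}$, which contains every $\rho_k(\boldsymbol{\omega})$. This is where the hypothesis $B_k \subseteq [0,+\infty)$ enters: realizing $\mu^{\mathbf{m}}(\boldsymbol{\eta})$ and $\mu(\boldsymbol{\eta})$ on a common probability space through independent digits $\beta_j$ uniform on $B_{\eta_j}$, one has $X = \sum_j (N_{\eta_1}^{m_1}\cdots N_{\eta_j}^{m_j})^{-1}\beta_j$ and $Y = \sum_j (N_{\eta_1}\cdots N_{\eta_j})^{-1}\beta_j$ with $0 \le X \le Y$ pointwise, since $\beta_j \ge 0$ and $m_i \ge 1$. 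Hence $\mu^{\mathbf{m}}(\boldsymbol{\eta})([0,R]) \ge \mu(\boldsymbol{\eta})([0,R])$ for every $R$, and since the measures in $\Phi$ sit on $[0,+\infty)$ the compact set in the definition of tightness may be taken to be an interval $[0,R]$; thus a single $R$ works simultaneously for all $\mathbf{m}$ and $\boldsymbol{\eta}$. With this family tight, the standard equivalence between tightness and equicontinuity at the origin of characteristic functions yields $\delta > 0$ with $|\widehat{\nu}(\zeta) - 1| < \epsilon$ whenever $|\zeta| < \delta$, uniformly over the family. Since $|\xi|/a_k(\boldsymbol{\omega}) \le |\xi|/2^k < \delta$ once $k$ is large, we get $|\widehat{\tau_k(\boldsymbol{\omega})}(\xi) - 1| < \epsilon$ for all $\boldsymbol{\omega}$ at once.

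Finally I would assemble the estimate. Given $\xi$ and $\epsilon$, fix such a $k$. Because $\boldsymbol{\omega}^{(j)} \to \boldsymbol{\omega}$, for all large $j$ the strings $\boldsymbol{\omega}^{(j)}$ and $\boldsymbol{\omega}$ agree on their first $k$ coordinates, so the finite convolutions coincide, $\mu^{\bn}_k(\boldsymbol{\omega}^{(j)}) = \mu^{\bn}_k(\boldsymbol{\omega})$. Using $|\widehat{\mu^{\bn}_k(\boldsymbol{\omega})}(\xi)| \le 1$ together with the factorization,
\begin{align*}
	\big|\widehat{\mu^{\bn}(\boldsymbol{\omega}^{(j)})}(\xi) - \widehat{\mu^{\bn}(\boldsymbol{\omega})}(\xi)\big|
	&= \big|\widehat{\mu^{\bn}_k(\boldsymbol{\omega})}(\xi)\big|\,\big|\widehat{\tau_k(\boldsymbol{\omega}^{(j)})}(\xi) - \widehat{\tau_k(\boldsymbol{\omega})}(\xi)\big| \\
	&\le \big|\widehat{\tau_k(\boldsymbol{\omega}^{(j)})}(\xi) - 1\big| + \big|1 - \widehat{\tau_k(\boldsymbol{\omega})}(\xi)\big| < 2\epsilon,
\end{align*}
so the Fourier transforms converge pointwise, and Lemma~\ref{lemma_weak_convergence_equivalent_statement} gives $\mu^{\bn}(\boldsymbol{\omega}^{(j)}) \to \mu^{\bn}(\boldsymbol{\omega})$ weakly. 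As $(\mathcal{P}(\mathbb{R}), \mathcal{T}_{w})$ is metrizable, sequential continuity coincides with continuity, finishing the proof.

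I expect the main obstacle to be the uniform decay $\widehat{\tau_k(\boldsymbol{\omega})}(\xi) \to 1$, and in particular the promotion of the tightness of $\Phi$, which concerns only the exponents $n_k \equiv 1$, to tightness of all tails, which carry shifted exponent sequences. The monotone coupling makes this work precisely because $B_k \subseteq [0,+\infty)$; without sign control on the digits the domination $X \le Y$ would fail and a different argument would be required.
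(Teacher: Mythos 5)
Your proof is correct and takes essentially the same approach as the paper's: the same reduction to pointwise convergence of Fourier transforms, the same head--tail factorization with the head frozen once the first $k$ coordinates agree, and the same key step of dominating the exponent-$\mathbf{n}$ tails by the exponent-$1$ tails (which lie in $\Phi$) to conclude that the tails are uniformly concentrated near the origin --- including the observation that $B_k\subseteq[0,+\infty)$ is exactly what makes this domination work. The only differences are in packaging: you invoke the standard fact that a tight family has characteristic functions equicontinuous at $0$ and run the triangle inequality through $1$, whereas the paper proves the same bound by hand, splitting the cosine and sine integrals over $\frac{1}{2^{m}}[-M,M]$ and its complement.
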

	
	\begin{proof}
		Given $\boldsymbol{\omega}=\omega_{1}\omega_{2}\ldots\omega_{j}\ldots\in\Omega$, it is equivalent to prove that for every sequence $\{\boldsymbol{\omega}^{(k)}=\omega_{1}^{(k)}\omega_{2}^{(k)}\ldots\omega_{j}^{(k)}\ldots \in\Omega\}_{k=1}^{\infty}$ convergent to $\boldsymbol{\omega}$, we have that $\lim_{k\to\infty }\phi(\boldsymbol{\omega}^{k})=\phi(\boldsymbol{\omega}).$
		By Lemma \ref{lemma_weak_convergence_equivalent_statement}, it is sufficient to show that
		$$
		\lim\limits_{k\to\infty} \widehat{\mu^{\mathbf{n}}(\boldsymbol{\omega}^{(k)})}(\xi) = \widehat{\mu^{\mathbf{n}}(\boldsymbol{\omega})} (\xi).
		$$
		for every $\xi \in \mathbb{R}$ and $\xi \neq 0$ since $\widehat{\mu^{\mathbf{n}}(\boldsymbol{\omega}^{(k)})}(0) = \widehat{\mu^{\mathbf{n}}(\boldsymbol{\omega})}(0) = 1.$

Fix $\xi \neq 0$.  Arbitrarily choose $\epsilon$ such that $0<\epsilon<\frac{1}{4}$. Since $\Phi$ is tight, there exists $M>0$ such that
		\begin{equation}\label{phitight}
			\nu([-M,M]) > 1-\epsilon,
		\end{equation}
		for all $\nu \in \Phi$. Choose an integer $m>0$ such that
		\begin{equation}\label{equation_assumption_for_m_wrt_epsilon}
			\frac{1}{2^m} < \dfrac{\epsilon}{\vert \xi M\vert}.
		\end{equation}
		 Since $\lim_{k\to \infty}\boldsymbol{\omega}^{(k)}=\boldsymbol{\omega} $, there exists $K>0$ such that $d(\boldsymbol{\omega}^{(k)},\boldsymbol{\omega})<\frac{1}{2^m}$  for all integers $k>K$.
		This   implies  that for all $k> K$,
		$$
		\omega_{j}^{(k)} = \omega_{j}, \qquad  \textit{ for $j=1,2,\ldots, m$.}
		$$

For each integer $k>0$, we write
	\begin{equation}
		\mu^{\mathbf{n}}_{>k}(\boldsymbol{\omega}) =\delta_{ N_{\omega_{1}}^{-n_{1}} N_{\omega_{2}}^{-n_{2}} \dots N_{\omega_{k+1}}^{-n_{k+1}} B_{\omega_{k+1}}} * \delta_{N_{\omega_{1}}^{-n_{1}} N_{\omega_{2}}^{-n_{2}} \dots N_{\omega_{k+2}}^{-n_{k+2}} B_{\omega_{k+2}}} *  \dots,
	\end{equation}
and it is clear that $\mu^{\mathbf{n}}(\boldsymbol{\omega})= \mu^{\mathbf{n}}_{k}(\boldsymbol{\omega}) * \mu^{\mathbf{n}}_{>k}(\boldsymbol{\omega})$. For simplicity, we  write
\begin{equation}\label{nu-large-than-n}
		\nu^{\mathbf{n}}_{>k}(\boldsymbol{\omega})(\;\cdot\;) = \mu^{\mathbf{n}}_{>k}(\boldsymbol{\omega})\left( \frac{1}{N_{\omega_{1}}^{n_{1}} N_{\omega_{2}}^{n_{2}} \cdots N_{\omega_{n}}^{n_{n}}} \; \cdot\; \right),
\end{equation}
and it is equivalent to
	\begin{equation}
		\nu^{\mathbf{n}}_{>k}(\boldsymbol{\omega}) =\delta_{ N_{\omega_{k+1}}^{-n_{k+1}} B_{\omega_{k+1}}} * \delta_{N_{\omega_{k+1}}^{-n_{k+1}}N_{\omega_{k+2}}^{-n_{k+2}} B_{\omega_{k+2}}} *  \cdots.
	\end{equation}

		Hence for $k> K$, we have that
		\begin{eqnarray*}
			\vert \widehat{\mu^{\mathbf{n}}(\boldsymbol{\omega}^{(k)})}(\xi) - \widehat{\mu^{\mathbf{n}}(\boldsymbol{\omega})}(\xi)\vert
			&\leq& \vert \widehat{\mu^{\mathbf{n}}_{m}(\boldsymbol{\omega})}(\xi)\vert \cdot \vert \widehat{\mu^{\mathbf{n}}_{>m}(\boldsymbol{\omega}^{(k)})}(\xi) - \widehat{\mu^{\mathbf{n}}_{>m}(\boldsymbol{\omega})}(\xi)\vert\\
			&\leq& \vert \widehat{\mu^{\mathbf{n}}_{>m}(\boldsymbol{\omega}^{(k)})}(\xi) - \widehat{\mu^{\mathbf{n}}_{>m}(\boldsymbol{\omega})}(\xi)\vert\\
			&\leq& \Big\vert \int_{\mathbb{R}} e^{-2\pi i \xi \cdot x} \mathrm{d}\mu^{\mathbf{n}}_{>m}(\boldsymbol{\omega}^{(k)}) - \int_{\mathbb{R}} e^{-2\pi i \xi \cdot x} \mathrm{d}\mu^{\mathbf{n}}_{>m}(\boldsymbol{\omega}) \Big\vert \\
			&\leq& \bigg\vert \int_{\mathbb{R}} \cos(2\pi \xi x) \mathrm{d}\mu^{\mathbf{n}}_{>m}(\boldsymbol{\omega}^{(k)}) - \int_{\mathbb{R}} \cos(2\pi \xi x) \mathrm{d}\mu^{\mathbf{n}}_{>m}(\boldsymbol{\omega}) \bigg\vert  \\
			&& +	\bigg\vert \int_{\mathbb{R}} \sin(2\pi \xi x) \mathrm{d}\mu^{\mathbf{n}}_{>m}(\boldsymbol{\omega}^{(k)}) - \int_{\mathbb{R}} \sin(2\pi \xi x) \mathrm{d}\mu^{\mathbf{n}}_{>m}(\boldsymbol{\omega}) \bigg\vert.
		\end{eqnarray*}
		
		Note that for every $\boldsymbol{\beta}=\beta_1\beta_2\ldots\beta_j\ldots \in \Omega$, we have that for all $b_{\beta_{j}} \in B _{\beta_{j}}$ and all $l\geq 1$,
		\begin{equation*}
			\sum_{j=l+1}^{\infty} \dfrac{b_{\beta_{j}}}{N_{\beta_{l+1}}^{n_{l+1}}N_{\beta_{l+2}}^{n_{l+2}} \dots N_{\beta_{j}}^{n_{j}}} \leq \sum_{j=l+1}^{\infty} \dfrac{b_{\beta_{j}}}{N_{\beta_{l+1}}N_{\beta_{l+2}} \dots N_{\beta_{j}}},
		\end{equation*}
which implies
        \begin{equation*}
				\nu^{\mathbf{n}}_{>l}(\boldsymbol{\beta})([-M,M]) \geq \nu_{>l}(\boldsymbol{\beta})([-M,M]).
			\end{equation*}
			Since $\nu_{>l}(\boldsymbol{\beta}) \in \Phi$ and $N_k\ge2$ for all $k>0$, by \eqref{phitight} and \eqref{nu-large-than-n}, it follows  that
			\begin{equation*}
				\begin{aligned}
					\mu^{\mathbf{n}}_{>l}(\boldsymbol{\beta})\bigg(\dfrac{1}{2^{l}}[-M,M]\bigg) &\geq
					\mu^{\mathbf{n}}_{>l}(\boldsymbol{\beta})(N_{\beta_{1}}^{-n_{1}}N_{\beta_{2}}^{-n_{2}}\dots N_{\beta_{l}}^{-n_{l}}[-M,M])\\
					&= \nu^{\mathbf{n}}_{>l}(\boldsymbol{\beta})([-M,M])\\
					&\geq \nu_{>l}(\boldsymbol{\beta})([-M,M]) \\
                    &>1-\epsilon,
				\end{aligned}
		\end{equation*} 
		for all $l\geq 1$. Note that this implies that  the measure of $\mu^{\mathbf{n}}_{>l}(\boldsymbol{\beta})$ is concentrated on some neighborhood of $x=0$, and the radius of the neighborhood converges to $0$ as $l$ tends to $\infty$,  which is independent of the choice of $\boldsymbol{\beta} \in \Omega$.
		
		Letting $E_{m} = \frac{1}{2^{m}}[-M, M]$. By  \eqref{equation_assumption_for_m_wrt_epsilon}, it is clear that  $\vert \frac{2\pi \xi \cdot M}{2^{m}} \vert <\frac{\pi}{2}$. Hence, both values of the integrals $\int_{E_{m}} \cos(2\pi \xi x) \mathrm{d}\mu^{\mathbf{n}}_{>m}(\boldsymbol{\omega}^{(k)})$ and $\int_{E_{m}} \cos(2\pi \xi x) \mathrm{d}\mu^{\mathbf{n}}_{>m}(\boldsymbol{\omega})$ are contained in the interval $[\cos(\frac{\pi \xi \cdot M}{2^{m-1}}), 1]$, and this  implies that
		\begin{equation*}
			\bigg\vert \int_{E_{m}} \cos(2\pi \xi x) \mathrm{d}\mu^{\mathbf{n}}_{>m}(\boldsymbol{\omega}^{(k)}) - \int_{E_{m}} \cos(2\pi \xi x) \mathrm{d}\mu^{\mathbf{n}}_{>m}(\boldsymbol{\omega}) \bigg\vert \leq 1-\cos(\frac{\pi \xi \cdot M}{2^{m-1}}) \leq \dfrac{(\pi \xi M)^{2}}{2^{2m-1}}.
		\end{equation*}
		Therefore, combining it with  \eqref{equation_assumption_for_m_wrt_epsilon},  we obtain that
		\begin{eqnarray*}
			&&\bigg\vert \int_{\mathbb{R}} \cos(2\pi \xi x) \mathrm{d}\mu^{\mathbf{n}}_{>m}(\boldsymbol{\omega}^{(k)}) - \int_{\mathbb{R}} \cos(2\pi \xi x) \mathrm{d}\mu^{\mathbf{n}}_{>m}(\boldsymbol{\omega}) \bigg\vert \\
			& \leq& \bigg\vert \int_{E_{m}} \cos(2\pi \xi x) \mathrm{d}\mu^{\mathbf{n}}_{>m}(\boldsymbol{\omega}^{(k)}) - \int_{E_{m}} \cos(2\pi \xi x) \mathrm{d}\mu^{\mathbf{n}}_{>m}(\boldsymbol{\omega}) \bigg\vert\\
			&& \hspace{3cm} + \bigg\vert \int_{\mathbb{R} \setminus E_{m}} \cos(2\pi \xi x) \mathrm{d}\mu^{\mathbf{n}}_{>m}(\boldsymbol{\omega}^{(k)})\bigg\vert + \bigg\vert \int_{\mathbb{R} \setminus E_{m}} \cos(2\pi \xi x) \mathrm{d}\mu^{\mathbf{n}}_{>m}(\boldsymbol{\omega}) \bigg\vert \\
			&<& \dfrac{(\pi \xi M)^{2}}{2^{2m-1}} + 2\epsilon\\
			&\leq& C_1\epsilon,
		\end{eqnarray*}
		where $C_1>0$ is a constant.  Similarly, we have that
		\begin{equation*}
			\bigg\vert \int_{E_{m}} \sin(2\pi \xi x) \mathrm{d}\mu^{\mathbf{n}}_{>m}(\boldsymbol{\omega}^{(k)}) - \int_{E_{m}} \sin(2\pi \xi x) \mathrm{d}\mu^{\mathbf{n}}_{>m}(\boldsymbol{\omega}) \bigg\vert \leq  \dfrac{\vert \pi \xi M \vert}{2^{m}},
		\end{equation*}
		and it follows that		
		\begin{equation*}
			\bigg\vert \int_{\mathbb{R}} \sin(2\pi \xi x) \mathrm{d}\mu^{\mathbf{n}}_{>m}(\boldsymbol{\omega}^{(k)}) - \int_{\mathbb{R}} \sin(2\pi \xi x) \mathrm{d}\mu^{\mathbf{n}}_{>m}(\boldsymbol{\omega}) \bigg\vert < \dfrac{\vert \pi \xi M \vert}{2^{m}} + 2\epsilon \leq C_2 \epsilon,
		\end{equation*}
		where $C_2>0$ is a constant.
		
		Consequently, for all $k>K$, we obtain that
		\begin{equation*}
			\vert \widehat{\mu^{\mathbf{n}}(\boldsymbol{\omega}^{(k)})}(\xi) - \widehat{\mu^{\mathbf{n}}(\boldsymbol{\omega})}(\xi)\vert <  C\epsilon,	\end{equation*}
		where $C>0$ is a  constant, and it follows that
		\begin{equation*}
			\lim\limits_{k\to\infty} \widehat{\mu^{\mathbf{n}}(\boldsymbol{\omega}^{(k)})}(\xi) = \widehat{\mu^{\mathbf{n}}(\boldsymbol{\omega})}(\xi),
		\end{equation*}
		for all $\xi\in\mathbb{R}$. Hence the mapping $\mu^{\mathbf{n}}$ is continuous.
	\end{proof}
	
	Now, we are ready to prove the measurability of the mapping $\mu^{\mathbf{n}}$ with respect to $\boldsymbol{\omega}$.
	\begin{proposition}\label{prop_Borel_measurable_mapping}
If $\Phi$ is tight, then for each $B \in \mathcal{B}(\mathbb{R})$, 	 the mapping $\mu^{\mathbf{n}}(B)$ given by \eqref{Redef_rm} is $\mathcal{F}$-measurable.
	\end{proposition}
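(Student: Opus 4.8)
The plan is to reduce the measurability of the map $\boldsymbol{\omega} \mapsto \mu^{\mathbf{n}}(\boldsymbol{\omega})(B) = \int_{\mathbb{R}} \chi_{B} \, \mathrm{d}\mu^{\mathbf{n}}(\boldsymbol{\omega})$ to the continuity established in Lemma~\ref{lemma_continuous_mapping_from_symbol_space_to_Prob_space} by way of the Baire hierarchy. I would introduce the class
\[
\mathcal{H} = \Big\{\, f\colon \mathbb{R} \to [0,1] \ \Big|\ \boldsymbol{\omega} \mapsto \int_{\mathbb{R}} f \, \mathrm{d}\mu^{\mathbf{n}}(\boldsymbol{\omega}) \ \text{is}\ \mathcal{F}\text{-measurable} \,\Big\}
\]
and aim to prove that $\mathrm{Ba}(\mathbb{R}) \subseteq \mathcal{H}$. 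Since $\chi_{B} \in \mathrm{Ba}(\mathbb{R})$ for every Borel set $B$ by Theorem~\ref{thm_chi_B_Baire_function}, this at once yields the $\mathcal{F}$-measurability of $\mu^{\mathbf{n}}(B)$, which is exactly the claim.

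First I would check that $\mathcal{H}$ contains every continuous function $f\colon \mathbb{R} \to [0,1]$, that is, $B_{0}(\mathbb{R}) \subseteq \mathcal{H}$. Such an $f$ is bounded and continuous, so the evaluation $\nu \mapsto \int_{\mathbb{R}} f \, \mathrm{d}\nu$ is continuous on $(\mathcal{P}(\mathbb{R}), \mathcal{T}_{w})$ by the very definition of the weak topology. Composing this with the continuous mapping $\phi$ of Lemma~\ref{lemma_continuous_mapping_from_symbol_space_to_Prob_space} (here the hypothesis that $\Phi$ is tight is used) shows that $\boldsymbol{\omega} \mapsto \int_{\mathbb{R}} f \, \mathrm{d}\mu^{\mathbf{n}}(\boldsymbol{\omega})$ is continuous on $(\Omega, d)$, hence $\mathcal{F}$-measurable, so $f \in \mathcal{H}$.

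Next I would verify that $\mathcal{H}$ is closed under pointwise limits of sequences. If $f_{j} \in \mathcal{H}$ and $f_{j} \to f$ pointwise with $0 \le f \le 1$, then for each fixed $\boldsymbol{\omega}$ the dominated convergence theorem—with dominating function the constant $1$, which is integrable against the probability measure $\mu^{\mathbf{n}}(\boldsymbol{\omega})$—gives
\[
\int_{\mathbb{R}} f \, \mathrm{d}\mu^{\mathbf{n}}(\boldsymbol{\omega}) = \lim_{j\to\infty} \int_{\mathbb{R}} f_{j} \, \mathrm{d}\mu^{\mathbf{n}}(\boldsymbol{\omega}).
\]
Thus $\boldsymbol{\omega} \mapsto \int_{\mathbb{R}} f \, \mathrm{d}\mu^{\mathbf{n}}(\boldsymbol{\omega})$ is a pointwise limit of $\mathcal{F}$-measurable functions and therefore $\mathcal{F}$-measurable, so $f \in \mathcal{H}$. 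Since $\mathrm{Ba}(\mathbb{R})$ is the smallest family of $[0,1]$-valued functions containing $B_{0}(\mathbb{R})$ and closed under pointwise convergence, these two steps force $\mathrm{Ba}(\mathbb{R}) \subseteq \mathcal{H}$.

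The only genuinely delicate point is the passage from continuous test functions to characteristic functions of arbitrary Borel sets: the evaluation $\nu \mapsto \nu(B)$ is \emph{not} continuous on $(\mathcal{P}(\mathbb{R}), \mathcal{T}_{w})$ for general $B$ (it is continuous only on measures charging no mass on $\partial B$), so the continuity of $\phi$ alone cannot settle the matter directly. The Baire-class representation of $\chi_{B}$ furnished by Theorem~\ref{thm_chi_B_Baire_function}, together with the stability of $\mathcal{F}$-measurability under pointwise limits, is precisely what bridges this gap and closes the proof.
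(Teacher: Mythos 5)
Your proof is correct, and it runs on the same engine as the paper's: both arguments reduce the problem to showing that $\boldsymbol{\omega} \mapsto \int f \,\mathrm{d}\mu^{\mathbf{n}}(\boldsymbol{\omega})$ is $\mathcal{F}$-measurable for every Baire function $f$, handle the base case of continuous $f$ by combining the definition of the weak topology with the continuity of $\phi$ from Lemma~\ref{lemma_continuous_mapping_from_symbol_space_to_Prob_space}, pass through pointwise limits via the dominated convergence theorem, and close with Theorem~\ref{thm_chi_B_Baire_function}. The difference is one of packaging. The paper factors the map as $\pi_{B}\circ\phi$ with $\pi_{B}(\eta)=\eta(B)$, and proves by an explicit transfinite induction over the Baire classes $B_{\alpha}(\mathbb{R})$ that $\eta \mapsto \int f\,\mathrm{d}\eta$ is Borel on $\bigl(\mathcal{P}(\mathbb{R}),\sigma(\mathcal{T}_{w})\bigr)$ for every Baire $f$; this isolates a reusable fact about the space of measures that is independent of the particular random measure $\mu^{\mathbf{n}}$. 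You instead define a ``good class'' $\mathcal{H}$ of test functions on the composite map and invoke the minimality characterization of $\mathrm{Ba}(\mathbb{R})$ (which the paper states after \eqref{def_Baire_functions}) to conclude $\mathrm{Ba}(\mathbb{R})\subseteq\mathcal{H}$; this is shorter, but the minimality property is itself proved by exactly the transfinite induction the paper writes out, so the induction is hidden rather than avoided. One cosmetic point: as written, membership in $\mathcal{H}$ presupposes that $\int f\,\mathrm{d}\mu^{\mathbf{n}}(\boldsymbol{\omega})$ is defined, so you should restrict $\mathcal{H}$ to Borel measurable $f$ (harmless, since Baire functions are Borel and the class of Borel $[0,1]$-valued functions is closed under pointwise limits); your final remark about the failure of continuity of $\eta\mapsto\eta(B)$ on $\partial B$-charging measures correctly identifies why the Baire-hierarchy detour is needed at all.
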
	
	\begin{proof}
		Fix $B \in \mathcal{B}(\mathbb{R})$, and  we define the mapping $\pi_{B}: (\mathcal{P}(\mathbb{R}), \mathcal{T}_{w}) \to (\mathbb{R},\mathcal{B}(\mathbb{R}))$ by
		\begin{equation*}
			\pi_{B}(\eta) = \eta(B).
		\end{equation*}
		It immediately follows that  the mapping $\mu^{\mathbf{n}}(B)$ is the composition of $\pi_{B}$ and $\phi$, that is,
		\begin{equation*}
			\mu^{\mathbf{n}}(\boldsymbol{\omega},B) = \pi_{B} \circ \phi(\boldsymbol{\omega}),
		\end{equation*}
		where $\phi$ is defined by \eqref{def_phi}.

Since $\Phi$ is tight, by Lemma \ref{lemma_continuous_mapping_from_symbol_space_to_Prob_space}, it is clear that $\phi$ is continuous,   and  it suffices to show that $\pi_{B}$ is a Borel mapping, that is,
		\begin{equation*}
			\{\eta\in\mathcal{P}(\mathbb{R}) : \eta(B) < t\} \in \sigma(\mathcal{T}_{w}),
		\end{equation*}
		for all $t >0$. Note that $\{\eta\in\mathcal{P}(\mathbb{R}) : \eta(B) < t\} =\emptyset $ for all $t\leq 0$.
		
For every Baire function $f \in \mathrm{Ba}(\mathbb{R})$, we define a mapping  $\Lambda_{f}: \mathcal{P}(\mathbb{R}) \to \mathbb{R}$ by
\begin{equation}\label{def_lnrfun}
		\Lambda_{f}(\eta) = \int f \mathrm{d}\eta,
\end{equation}
		and it is well-defined since every function $f \in \mathrm{Ba}(\mathbb{R})$ is bounded and thus integrable under $\eta \in \mathcal{P}(\mathbb{R})$. Recall that $\mathcal{T}_{w}$ is the weak topology of $\mathcal{P}(\mathbb{R})$, that is, $\mathcal{T}_{w}$ is the coarsest topology for which $\{\Lambda_{f}\}_{f\in C_{b}(\mathbb{R})}$ is continuous.
		
		For every $f \in \mathrm{Ba}(\mathbb{R})$, we write that for every $t\in\mathbb{R}$,
		\begin{equation}\label{def_Mft}
			\mathcal{M}_{f}(t) = \{\eta\in\mathcal{P}(\mathbb{R}) : \Lambda_{f}(\eta) < t\}.
		\end{equation}
		For every ordinal $\alpha \in [0, \omega_{1})$, we denote by $P(\alpha)$ the property that
		\begin{center}
			$P(\alpha): \quad \mathcal{M}_{f}(t) \in \sigma(\mathcal{T}_{w})$ for  every $t \in\mathbb{R}$ and  every $f \in B_{\alpha}(\mathbb{R})$.
		\end{center}
		We claim that $P(\alpha)$ is true for all ordinals  $\alpha \in [0, \omega_{1})$.
		
		We prove it by transfinite induction. First,  we show $P(\alpha)$ is true for $\alpha =0 $. Arbitrarily choose $f \in B_{0}(\mathbb{R})$, and it is clear that   $\Lambda_{f}$ is continuous since $B_{0}(\mathbb{R})\subset C_{b}(\mathbb{R})$. Hence we have that
		\begin{equation*}
			\mathcal{M}_{f} (t) = \{\eta\in\mathcal{P}(\mathbb{R}) : \Lambda_{f}(\eta) < t\} \in \mathcal{T}_{w}\subseteq \sigma(\mathcal{T}_{w}),
		\end{equation*}
		for all $t\in\mathbb{R}$, and the property $P(0)$ is true.
		
		Next, for an  ordinal $\alpha < \omega_{1}$, we assume that $P(\alpha)$ is true, that is $\mathcal{M}_{f}(t) \in \sigma(\mathcal{T}_{w})$ for  every $t \in\mathbb{R}$ and  every $f \in B_{\alpha}(\mathbb{R})$.
		
		For each $f \in B_{\alpha+1}(\mathbb{R})$, there exists a sequence of functions $\{f_{n}\}_{n=1}^{\infty}$ in $B_{\alpha}(\mathbb{R})$ convergent pointwisely to $f$. Since  $f_{n} \in B_{\alpha}(\mathbb{R})$ is bounded by $1$, we have that $f_{n} \in L^{1}(\eta)$ for all $\eta \in \mathcal{P}(\mathbb{R})$. Applying the dominated convergence theorem, we obtain
		\begin{equation*}
			\lim\limits_{n\to\infty} \int f_{n} \mathrm{d}\eta = \int \lim\limits_{n\to\infty}f_{n} \mathrm{d}\eta
			= \int f \mathrm{d}\eta = \Lambda_{f}(\eta),
		\end{equation*}
		for all $\eta \in \mathcal{P}(\mathbb{R})$.
		It follows that for every $t \in\mathbb{R}$,
		\begin{eqnarray*} 
			\mathcal{M}_{f}(t) &=& \{\eta\in\mathcal{P}(\mathbb{R}) : \lim\limits_{n\to\infty} \int f_{n} \mathrm{d}\eta < t\}  \\
			&=& \bigcup_{k=1}^{\infty} \bigcup_{N=1}^{\infty} \bigcap_{n=N}^{\infty} \left\{\eta\in\mathcal{P}(\mathbb{R}) : \Lambda_{f_{n}}(\eta) < t - \frac{1}{k}\right\}\\
			&=&\bigcup_{k=1}^{\infty} \bigcup_{N=1}^{\infty} \bigcap_{n=N}^{\infty}  \mathcal{M}_{f_n}(t-\frac{1}{k}).
		\end{eqnarray*}
		
		Since $\mathcal{M}_{f_n}(t-\frac{1}{k}) \in \sigma(\mathcal{T}_{w})$ for all $k>0$ and all $n>0$,  we immediate have $\mathcal{M}_{f} (t)\in \sigma(\mathcal{T}_{w})$ for all $t\in \mathbb{R}$ and all  $f\in B_{\alpha+1}(\mathbb{R}) $. Hence the property  $P(\alpha+1)$ is true.
		
		Finally, for a nonzero limit ordinal $\alpha < \omega_{1}$, suppose that $P(\lambda)$ holds for all $\lambda < \alpha$. Since $\alpha$ is a limit ordinal, for each  function $f \in B_{\alpha}(\mathbb{R})$, it follows from the definition of $B_{\alpha}(\mathbb{R})$ that there exists an ordinal $ \lambda_{0} < \alpha < \omega_{1}$ such that $f \in B_{ \lambda_{0}}(\mathbb{R})$. Hence for all $t\in \mathbb{R}$, we have that  $\mathcal{M}_{f}(t) \in \sigma(\mathcal{T}_{w})$ by induction hypothesis, and $P(\alpha)$ is true. Therefore the claim holds.
		
		By the claim, we have that  $\mathcal{M}_{f}(t) \in \sigma(\mathcal{T}_{w})$  for all $t\in\mathbb{R}$ and all $f\in \mathrm{Ba}(\mathbb{R})$.   For each $B\in\mathcal{B}(\mathbb{R})$, by Theorem \ref{thm_chi_B_Baire_function}, we have $\chi_B\in \mathrm{Ba}(\mathbb{R})$. By \eqref{def_lnrfun} and \eqref{def_Mft}, it follows that
		\begin{equation*}
			\{\eta\in\mathcal{P}(\mathbb{R}) : \eta(B) < t\}= \mathcal{M}_{\chi_B}(t)\in \sigma(\mathcal{T}_{w}),
		\end{equation*}
		for all $t > 0$, and $\pi_{B}$ is a Borel mapping,  which completes the proof.
	\end{proof}
	
In \cite{Li-Miao-Wang-2021}, Li, Miao and Wang provided a necessary and sufficient condition for the convergence of infinite convolutions.
	\begin{theorem}\label{thm_weak-convergence-infinite-convolution}
		Let $\{ A_k\}_{k=1}^\f$ be a sequence of finite subsets of $(0,+\f)$ satisfying that $\# A_k \ge 2$ for each $k \ge 1$.
		Let $\nu_n$ be defined in \eqref{discrete-convolution}.
		Then the sequence of convolutions $\set{\nu_n}_{n=1}^{\infty}$ converges weakly to a Borel probability measure if and only if
		\begin{equation}\label{thm_existence}
			\sum_{k=1}^{\f} \frac{1}{\# A_k} \sum_{a\in A_k} \frac{a}{1+a} < \f.
		\end{equation}
	\end{theorem}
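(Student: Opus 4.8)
The plan is to recast the statement probabilistically and then invoke the classical convergence criterion for series of independent non-negative random variables. First I would realize, on a suitable probability space, a sequence of independent random variables $\{X_k\}_{k=1}^\f$ with each $X_k$ distributed according to $\delta_{A_k}$, that is $\PP(X_k = a) = 1/\# A_k$ for every $a \in A_k$. Since the convolution of measures corresponds to the addition of independent random variables, the finite convolution $\nu_n$ from \eqref{discrete-convolution} is precisely the distribution of the partial sum $S_n = X_1 + \cdots + X_n$. Because $A_k \subseteq (0,+\f)$, every $X_k$ is strictly positive, so the partial sums $\{S_n\}$ are non-decreasing and converge almost surely to a limit $S_\f = \sum_{k=1}^\f X_k$ taking values in $[0,+\f]$.

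The next step is to show that $\{\nu_n\}$ converges weakly to a Borel probability measure if and only if $S_\f < \f$ almost surely. Since $\{S_\f = \f\}$ is a tail event, Kolmogorov's zero--one law gives the dichotomy $\PP(S_\f < \f) \in \{0,1\}$. If $S_\f < \f$ almost surely, then $S_n \to S_\f$ almost surely, hence in distribution, so $\nu_n$ converges weakly to the law of $S_\f$, a Borel probability measure. If instead $S_\f = \f$ almost surely, then since $\{S_n \le M\}$ decreases to $\{S_\f \le M\}$ we have $\nu_n([-M,M]) = \PP(S_n \le M) \downarrow \PP(S_\f \le M) = 0$ for every $M > 0$, so $\{\nu_n\}$ is not tight and cannot converge weakly to any probability measure.

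The heart of the matter, and the step I expect to be the main obstacle, is to decide which alternative occurs in terms of the series in \eqref{thm_existence}. For this I would use the Laplace transform: by independence together with dominated convergence applied to the decreasing sequence $e^{-S_n} \downarrow e^{-S_\f}$ (with the convention $e^{-\f} := 0$),
\begin{equation*}
  \E\big[e^{-S_\f}\big] = \prod_{k=1}^\f \E\big[e^{-X_k}\big].
\end{equation*}
By the dichotomy above, $S_\f < \f$ almost surely if and only if $\E[e^{-S_\f}] > 0$, i.e. if and only if the infinite product converges to a positive number, which holds precisely when $\sum_{k=1}^\f \big(1 - \E[e^{-X_k}]\big) < \f$. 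Using the elementary two-sided bound $\frac{x}{1+x} \le 1 - e^{-x} \le \frac{2x}{1+x}$ valid for all $x \ge 0$, the term $1 - \E[e^{-X_k}] = \E[1 - e^{-X_k}]$ is comparable to $\E\big[\frac{X_k}{1+X_k}\big]$, so the two series converge or diverge together. Finally, integrating against the uniform law of $X_k$ on $A_k$ yields
\begin{equation*}
  \E\Big[\frac{X_k}{1+X_k}\Big] = \frac{1}{\# A_k} \sum_{a \in A_k} \frac{a}{1+a},
\end{equation*}
so the condition $\sum_k \E[\tfrac{X_k}{1+X_k}] < \f$ is exactly \eqref{thm_existence}, which completes the equivalence. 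The only genuinely delicate points are justifying the Laplace-transform factorization in the limit and verifying the comparison inequality; both are elementary once the probabilistic framework is in place, but they must be handled carefully to keep the ``if and only if'' tight.
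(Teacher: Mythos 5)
Your proof is correct. A preliminary remark: the paper you are being compared against does not actually contain a proof of this statement --- Theorem \ref{thm_weak-convergence-infinite-convolution} is imported without proof from the cited work of Li, Miao and Wang, so the comparison can only be with the argument behind that source. Your argument is a valid, self-contained proof, and every step checks: the identification of $\nu_n$ with the law of $S_n = X_1 + \cdots + X_n$; the zero--one dichotomy for the tail event $\{S_\infty = \infty\}$; the non-tightness argument when $S_\infty = \infty$ a.s.\ (if $\nu_n \to \nu$ weakly, then $\nu((-M,M)) \le \liminf_n \nu_n((-M,M)) = 0$ for every $M$, forcing $\nu(\R)=0$, a contradiction); the factorization $\E[e^{-S_\infty}] = \prod_{k}\E[e^{-X_k}]$ via dominated convergence; the standard fact that an infinite product of factors in $(0,1]$ is positive iff the sum of $1$ minus the factors converges; and the two-sided bound $\frac{x}{1+x} \le 1-e^{-x} \le \frac{2x}{1+x}$ on $[0,\infty)$ (the left side is $1+x \le e^x$; the right side reduces to $2e^x \ge (1+x)^2$, which holds since $h(x)=2e^x-(1+x)^2$ has $h(0)=1$, $h'(0)=0$ and $h''\ge 0$ on $[0,\infty)$). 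The route taken in the literature for this equivalence is different in one key respect: there one applies Kolmogorov's three-series theorem to the truncated variables, and then shows that the single series $\sum_k \E\bigl[\tfrac{X_k}{1+X_k}\bigr]$ is equivalent to the pair of truncated conditions $\sum_k \PP(X_k > 1) < \infty$ and $\sum_k \E[\min\{X_k,1\}] < \infty$. Your Laplace-transform argument buys something real: it avoids truncation and the three-series machinery entirely, treats both directions of the equivalence through the single identity $\E[e^{-S_\infty}] = \prod_k \E[e^{-X_k}]$, and makes the appearance of the weight $\frac{a}{1+a}$ transparent as a two-sided surrogate for $1-e^{-a}$; the price is that it leans on the zero--one law (or, alternatively, a slightly more careful tightness argument) to upgrade positive probability of convergence to almost sure convergence.
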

	
In fact, the  conclusion still holds if   $A_k \subset [0,+\f) $ is finite such that $\# A_k \ge 2$ for each $k \ge 1$. Recall that if $\mathbf{n}$ constantly equals 1, that is $n_k=1$ for all $k\ge 1$, we write that $\mu (\boldsymbol{\omega})=\mu^{\mathbf{n}}(\boldsymbol{\omega})$.

	\begin{proposition}\label{prop_tightness_guarantees_existence}
		
		 Given $\boldsymbol{\omega} \in \Omega$, if $\mu (\boldsymbol{\omega})$ exists, then $\mu^{\mathbf{n}}(\boldsymbol{\omega})$ exists for all sequence $\mathbf{n}$ of positive integers.
	\end{proposition}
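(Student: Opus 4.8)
The plan is to convert the existence of both infinite convolutions into the summability criterion of Theorem~\ref{thm_weak-convergence-infinite-convolution} and then to compare the two criteria. Fix $\boldsymbol{\omega} = \omega_1\omega_2\cdots \in \Omega$. By \eqref{def_mubn}, the measure $\mu^{\mathbf{n}}(\boldsymbol{\omega})$ is precisely the infinite convolution of the finite sets
$$
A_k^{\mathbf{n}} = \frac{1}{N_{\omega_1}^{n_1} N_{\omega_2}^{n_2} \cdots N_{\omega_k}^{n_k}} B_{\omega_k}, \qquad k \ge 1,
$$
and by \eqref{def_corresponding_P_of_random_measure} the measure $\mu(\boldsymbol{\omega})$ is the infinite convolution of the sets $A_k$ obtained by setting $n_j = 1$ for all $j$. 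Since $B_{\omega_k} \subseteq [0,+\infty)$ and multiplication by a nonzero constant is injective, both families lie in $[0,+\infty)$ and satisfy $\# A_k^{\mathbf{n}} = \# A_k = \# B_{\omega_k}$. In particular the two families have the same cardinalities at every level, so the hypotheses required by Theorem~\ref{thm_weak-convergence-infinite-convolution}, in the extended form valid for subsets of $[0,+\infty)$ noted after its statement, hold for $\{A_k^{\mathbf{n}}\}$ precisely when they hold for $\{A_k\}$, and the criterion may be applied to each family.

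Next I would establish an elementwise domination between the two families. For a fixed $b \in B_{\omega_k}$ the corresponding elements are $a = b/(N_{\omega_1}\cdots N_{\omega_k}) \in A_k$ and $a^{\mathbf{n}} = b/(N_{\omega_1}^{n_1}\cdots N_{\omega_k}^{n_k}) \in A_k^{\mathbf{n}}$. Because $n_j \ge 1$ and $N_{\omega_j} \ge 2$ for every $j$, we have $\prod_{j=1}^{k} N_{\omega_j}^{n_j} \ge \prod_{j=1}^{k} N_{\omega_j}$, hence $0 \le a^{\mathbf{n}} \le a$. As the map $t \mapsto t/(1+t) = 1 - 1/(1+t)$ is nondecreasing on $[0,+\infty)$, this gives $a^{\mathbf{n}}/(1+a^{\mathbf{n}}) \le a/(1+a)$ term by term. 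Summing over $b \in B_{\omega_k}$, dividing by the common cardinality $\# B_{\omega_k}$, and then summing over $k$ yields
$$
\sum_{k=1}^{\infty} \frac{1}{\# A_k^{\mathbf{n}}} \sum_{a \in A_k^{\mathbf{n}}} \frac{a}{1+a} \ \le\ \sum_{k=1}^{\infty} \frac{1}{\# A_k} \sum_{a \in A_k} \frac{a}{1+a}.
$$

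To finish, I would invoke Theorem~\ref{thm_weak-convergence-infinite-convolution} in both directions: the existence of $\mu(\boldsymbol{\omega})$ forces the right-hand side above to be finite by \eqref{thm_existence}, whence the left-hand side is finite, and applying the criterion to the family $\{A_k^{\mathbf{n}}\}$ shows that $\mu^{\mathbf{n}}(\boldsymbol{\omega})$ exists. I expect no serious obstacle here; the substance is entirely the monotone comparison, and the only points that require care are bookkeeping ones: recording that both scaled families lie in $[0,+\infty)$, that their level cardinalities coincide so the hypotheses of the criterion transfer from $\{A_k\}$ to $\{A_k^{\mathbf{n}}\}$, and that the comparison is carried out elementwise so that monotonicity of $t \mapsto t/(1+t)$ can be used without reindexing.
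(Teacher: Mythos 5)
Your proposal is correct and takes essentially the same route as the paper's proof: both reduce existence to the summability criterion of Theorem~\ref{thm_weak-convergence-infinite-convolution} applied to the scaled digit sets, and then compare the two series term by term using $N_{\omega_1}^{n_1}N_{\omega_2}^{n_2}\cdots N_{\omega_k}^{n_k} \ge N_{\omega_1}N_{\omega_2}\cdots N_{\omega_k}$. The paper states the comparison directly as $\frac{b}{N_{\omega_1}^{n_1}\cdots N_{\omega_k}^{n_k}+b} \le \frac{b}{N_{\omega_1}\cdots N_{\omega_k}+b}$, which is exactly your monotonicity-of-$t\mapsto t/(1+t)$ step after clearing denominators.
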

	\begin{proof}  
		Since $\mu(\boldsymbol{\omega})$ exists, by \eqref{def_corresponding_P_of_random_measure} and Theorem \ref{thm_weak-convergence-infinite-convolution}, we have that
		\begin{equation}\label{eq_series_convergence_equivalent_to_mu_exist}
			\sum_{k=1}^{\infty} \dfrac{1}{\# B_{\omega_{k}}} \sum_{b \in B_{\omega_{k}}} \dfrac{b}{N_{\omega_{1}}N_{\omega_{2}}\dots N_{\omega_{k}}+b} < \infty.
		\end{equation}
		Since every element in $\mathbf{n}$ is a positive integer, it is clear that
		\begin{equation*}
			\sum_{b \in B_{\omega_{k}}} \dfrac{b}{N_{\omega_{1}}^{n_{1}}N_{\omega_{2}}^{n_{2}}\dots N_{\omega_{k}}^{n_{k}}+b} \leq 		\sum_{b \in B_{\omega_{k}}} \dfrac{b}{N_{\omega_{1}}N_{\omega_{2}}\dots N_{\omega_{k}}+b}.
		\end{equation*}
		It follows that
		\begin{equation*}
			\sum_{k=1}^{\infty} \dfrac{1}{\# B_{\omega_{k}}} \sum_{b \in B_{\omega_{k}}} \dfrac{b}{N_{\omega_{1}}^{n_{1}}N_{\omega_{2}}^{n_{2}}\dots N_{\omega_{k}}^{n_{k}}+b}<\infty,
		\end{equation*}
		and it  implies $\mu^{\mathbf{n}}(\boldsymbol{\omega})$ exists for all $\mathbf{n}$ by Theorem \ref{thm_weak-convergence-infinite-convolution}.
	\end{proof}

Finally, we are ready to prove that the mapping $\mu^\bn$ given by \eqref{Redef_rm} is a random measure, which is a direct consequence of above conclusions.

	\begin{proof}[Proof of Theorem \ref{thm_measurable}]
Since $\mu (\boldsymbol{\omega})$ exists for every $\boldsymbol{\omega} \in \Omega$, by Proposition \ref{prop_tightness_guarantees_existence},   the infinite convolution $\mu^{\mathbf{n}}(\boldsymbol{\omega})$ exists and is a Borel probability measure for all sequence $\mathbf{n}$ of positive integers.  Since $\Phi$ is tight,  by Proposition \ref{prop_Borel_measurable_mapping}, the mapping $\mu^{\mathbf{n}}(B):\Omega\to \R$ given by \eqref{Redef_rm} is $\mathcal{F}$-measurable for each $B \in \mathcal{B}(\mathbb{R})$. Therefore, the mapping $\mu^{\mathbf{n}} : \Omega\times \mathcal{B}(\mathbb{R}) \to [0,+\f]$  is a random measure.
\end{proof}

 \begin{proof}[Proof of Theorem \ref{thm_measurable'}]
The argument is similar to the proof of Theorem \ref{thm_measurable} but simpler, and we omit it.
\end{proof}

	\section{Spectrality of Random Measure}\label{sec_st}
	In this section,  we explore the spectrality of random measures. 	Let $\{(N_k,B_k)\}_{k=1}^\f$ be a  sequence of admissible pairs and $\bn=\{ n_{k}\}_{k=1}^\f$ be a sequence of positive integers.  Let $\mu^{\mathbf{n}}$ be the mapping  given by \eqref{def_random_measure} associated with $\{ (N_{k}, B_{k} ) \}_{k=1}^{\infty}$.

 We assume that $	\Phi = \{ \mu(\boldsymbol{\omega}): \boldsymbol{\omega} \in \Omega \}$ is tight.
 By Theorem \ref{thm_measurable},  $\mu^{\mathbf{n}}$ is a random measure. We rewrite $\mu^{\mathbf{n}}(\boldsymbol{\omega})= \mu^{\mathbf{n}}_{k}(\boldsymbol{\omega}) * \mu^{\mathbf{n}}_{>k}(\boldsymbol{\omega})$ where
	\begin{equation}
		\mu^{\mathbf{n}}_{>k}(\boldsymbol{\omega}) =\delta_{ N_{\omega_{1}}^{-n_{1}} N_{\omega_{2}}^{-n_{2}} \dots N_{\omega_{k+1}}^{-n_{k+1}} B_{\omega_{k+1}}} * \delta_{N_{\omega_{1}}^{-n_{1}} N_{\omega_{2}}^{-n_{2}} \dots N_{\omega_{k+2}}^{-n_{k+2}} B_{\omega_{k+2}}} *  \dots,
	\end{equation}
	is the tail of the infinite convolution, and we define
	\begin{equation}\label{def_nu_words}
		\nu^{\mathbf{n}}_{>k}(\boldsymbol{\omega}) =\delta_{ N_{\omega_{k+1}}^{-n_{k+1}} B_{\omega_{k+1}}} * \delta_{N_{\omega_{k+1}}^{-n_{k+1}}N_{\omega_{k+2}}^{-n_{k+2}} B_{\omega_{k+2}}} *  \cdots.
	\end{equation}
		Note that $\mu^{\mathbf{n}}_{>k}$ and $\nu^{\mathbf{n}}_{>k}$ are also   random measures. If $\bn=\{ 1 \}_{k=1}^\f$, we write
	\begin{equation} \label{def_nu1geqk}
		\nu_{>k}(\boldsymbol{\omega})=\nu^{\mathbf{n}}_{>k}(\boldsymbol{\omega}) =\delta_{ N_{\omega_{k+1}}^{-1} B_{\omega_{k+1}}} * \delta_{N_{\omega_{k+1}}^{-1}N_{\omega_{k+2}}^{-1} B_{\omega_{k+2}}} *  \cdots.
	\end{equation}
	for each $\boldsymbol{\omega}\in\Omega$.

	We first prove that $\mu^{\mathbf{n}}$ is a spectral random measure for unbounded sequence $\mathbf{n}$, that is for every $\boldsymbol{\omega} \in \Omega$, the realization $\mu^{\mathbf{n}}(\boldsymbol{\omega})$ is a spectral measure.
\begin{proof}[Proof of Theorem $\ref{thm_ub}$]
Let  $\delta_0$ denote the  Dirac measure. It is clear that the integral periodic zero set of $\delta_0$ is empty, that is, $\mathcal{Z}(\delta_0)=\emptyset$. For each fixed unbounded sequence $\mathbf{n}=\{n_{k}\}$ and each fixed $\boldsymbol{\omega} \in \Omega$, we  prove that $\{\nu_{>k}^{\mathbf{n}}(\boldsymbol{\omega})\}$ converges weakly to  $\delta_{0}$.  Since $\widehat{\delta_{0}}\equiv1$, by Lemma \ref{lemma_weak_convergence_equivalent_statement}, it is equivalents to prove that $\lim_{k\to \infty }\widehat{\nu_{>k}^\mathbf{n}(\boldsymbol{\omega})}(\xi)=1$ for all $\xi\in\R$.
		
 For every $\epsilon \in (0,1)$, since $\Phi$ is tight,  there exists  a real $M \geq 1$ such that
		$$
		\nu([-M, M]) > 1 - \epsilon,
		$$
		for every $\nu \in \Phi$.  Since the sequence $\{n_{k}\}$ is unbounded, it has a subsequence divergent to infinity, and for simplicity,  we assume that
		\begin{equation*}
			\lim\limits_{k\to\infty} n_{k} = \infty.
		\end{equation*}
Fix $\xi \in \mathbb{R}$. There exists an integer $K>0$ such that for all $k\geq K$,
\begin{equation} \label{ineq_xiep}
		\frac{M\pi \vert \xi \vert }{2^{n_{k+1}-2}} < \epsilon .
\end{equation}

	Moreover, for each  $k\geq 1$, since
		\begin{equation*}
			\sum_{j=k+1}^{\infty} \dfrac{b_{\omega_{j}}}{N_{\omega_{k+1}} N_{\omega_{k+2}}^{n_{k+2}} \dots N_{\omega_{j}}^{n_{j}}} \leq \sum_{j=k+1}^{\infty} \dfrac{b_{\omega_{j}}}{N_{\omega_{k+1}} N_{\omega_{k+2}} \dots N_{\omega_{j}}},
		\end{equation*}
for all $b_{\omega_{j}} \in B_{\omega_{j}}$,		we obtain that
		\begin{equation*}
				\nu_{>k}^{\mathbf{n}}(\boldsymbol{\omega})\Big(N_{\omega_{k+1}}^{1-n_{k+1}} \cdot [-M, M]\Big) \geq \nu_{>k}(\boldsymbol{\omega})([-M, M]).
		\end{equation*}
Since  $\nu_{>k}(\boldsymbol{\omega}) \in \Phi$, it follows that
		\begin{equation*}
			\begin{aligned}
				\nu_{>k}^{\mathbf{n}}(\boldsymbol{\omega})\Big(\dfrac{1}{2^{n_{k+1}-1}}[-M,M]\Big ) &\geq \nu_{>k}^{\mathbf{n}}(\boldsymbol{\omega})(N_{\omega_{k+1}}^{1-n_{k+1}} \cdot [-M, M]) \\
				&\geq \nu_{>k}(\boldsymbol{\omega})([-M, M]) \\
				&> 1 - \epsilon,
			\end{aligned}
		\end{equation*}
		that is to say,  the measure of $\nu_{>k}^{\mathbf{n}}(\boldsymbol{\omega})$ is concentrated on some neighborhood of the origin. Let  $E_{k} = \dfrac{1}{2^{n_{k+1}-1}}[-M,M]$. Then we have that
		$$
		\nu_{>k}^{\mathbf{n}}(E_k)>1-\epsilon.
		$$
		
Since $\vert e^{i\theta} - 1\vert \leq \vert \theta \vert$ for all $\theta \in [0,2\pi]$,  by \eqref{ineq_xiep}, we obtain that
		\begin{equation*}
			\int_{E_{k}} \vert e^{-2\pi i \xi x} - 1 \vert \mathrm{d}\nu_{>k}^{\mathbf{n}}(\boldsymbol{\omega}) \leq 2\pi \vert \xi \vert \int_{E_{k}} \vert x \vert \mathrm{d}\nu_{>k}^{\mathbf{n}}(\boldsymbol{\omega}) \leq \dfrac{M\pi \vert \xi \vert }{2^{n_{k+1}-2}}<\epsilon.
		\end{equation*}
		On the other hand, we have that
		\begin{equation*}
			\int_{\mathbb{R}/E_{k}} \vert e^{-2\pi i \xi x} - 1\vert \mathrm{d}\nu_{>k}^{\mathbf{n}}(\boldsymbol{\omega}) \leq 2\nu_{>k}^{\mathbf{n}}(\boldsymbol{\omega})(\mathbb{R}/E_{k}) \leq 2\epsilon.
		\end{equation*}
		Combining these together, we obtain that
		\begin{equation*}
			\vert \widehat{\nu_{>k}^{\mathbf{n}}(\boldsymbol{\omega})}(\xi) - 1 \vert\le \int_{\mathbb{R}} \vert e^{-2\pi i \xi x} - 1 \vert \mathrm{d}\nu_{>k}^{\mathbf{n}}(\boldsymbol{\omega})< 3\epsilon,
		\end{equation*}
		for all $k\geq K$, which implies that
		\begin{equation*}
			\lim\limits_{k\to\infty} \widehat{\nu_{>k}^{\mathbf{n}}(\boldsymbol{\omega})}(\xi) =1= \widehat{\delta_{0}}(\xi),
		\end{equation*}
		for all $\xi\in \R$.

		Therefore, by Lemma \ref{lemma_weak_convergence_equivalent_statement},  $\{\nu_{>k}^{\mathbf{n}}(\boldsymbol{\omega})\}$ converges weakly to $\delta_{0}$. Since  $\mathcal{Z}(\delta_{0}) = \emptyset$, it follows from Theorem $\ref{thm_ipzs}$ that  $\mu^{\mathbf{n}}(\boldsymbol{\omega})$ is a spectral measure for every $\boldsymbol{\omega}\in \Omega$, which completes the proof.
	\end{proof}

If $\mathbf{n}$ is bounded, we are only able to show that $\mu^\mathbf{n}$ is spectral $\mathbb{P}$-a.e.. To this end, we need  the following two conclusions.

	\begin{lemma} \label{lemma_gcd_of_B}
		Let $\{B_{j}\}_{j=1}^{\infty}$ be a sequence such that  $B_{j} \subseteq \mathbb{Z}$ and $\# B_j \geq 2$ for all $j\geq 1$. Then exactly one of the followings occurs:
		
		$(\rmnum{1})$ $\gcd \big( \bigcup_{j=1}^{\infty} B_{j} \big) > 1;$
		
		$(\rmnum{2})$ there exists a finite subset $\mathcal{I} \subseteq \mathbb{N}_{+}$ such that
		\begin{equation*}
			\gcd \Big( \bigcup_{i \in \mathcal{I}} B_{i} \Big) = 1.
		\end{equation*}
	\end{lemma}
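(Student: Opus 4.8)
The plan is to reduce the dichotomy entirely to the behaviour of the partial greatest common divisors. First I would observe that, since each $B_j$ contains at least two distinct integers, the union $U=\bigcup_{j=1}^{\infty}B_j$ contains a nonzero element, so $g:=\gcd(U)$ is a well-defined positive integer; the whole statement then amounts to the numerical alternative $g>1$ versus $g=1$, together with the assertion that the latter is witnessed by a finite subcollection. The elementary fact I would lean on throughout is that enlarging a set of integers can only refine its gcd in the divisibility order, i.e. $S\subseteq T$ implies $\gcd(T)\mid\gcd(S)$.

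Mutual exclusivity is immediate from this monotonicity: if $(\rmnum{2})$ holds, so that $\gcd\big(\bigcup_{i\in\mathcal{I}}B_i\big)=1$ for some finite $\mathcal{I}$, then $g=\gcd(U)\mid\gcd\big(\bigcup_{i\in\mathcal{I}}B_i\big)=1$, forcing $g=1$ and thereby excluding $(\rmnum{1})$. Hence at most one alternative can occur. Since trivially either $g>1$ (which is $(\rmnum{1})$) or $g=1$, it remains only to show that $g=1$ forces $(\rmnum{2})$, so that at least one alternative always holds.

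This last implication is the \emph{crux}, and the idea is that the gcd of the infinite union is already attained by a finite initial segment. For each $n\ge 1$ put $d_n=\gcd(B_1\cup\cdots\cup B_n)$, a positive integer. Using $\gcd(A\cup B)=\gcd(\gcd(A),\gcd(B))$ gives $d_{n+1}=\gcd\big(d_n,\gcd(B_{n+1})\big)$, so $d_{n+1}\mid d_n$ and therefore $d_{n+1}\le d_n$. Thus $\{d_n\}$ is a nonincreasing sequence of positive integers, which must stabilise: there is $N$ with $d_n=d_N$ for all $n\ge N$. I would then check that this stable value is exactly $g$. On one hand $g\mid d_N$, since $g$ divides every element of $U$. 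On the other hand, for each $m\ge N$ the integer $d_N=d_m$ divides every element of $B_m$, so $d_N$ divides every element of every $B_j$ and hence $d_N\mid g$. Consequently $g=d_N=\gcd(B_1\cup\cdots\cup B_N)$, and taking $\mathcal{I}=\{1,\dots,N\}$ yields $(\rmnum{2})$ whenever $g=1$.

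The only genuinely delicate point is the stabilisation of the descending divisibility chain $d_1,d_2,\dots$ and the verification that its limiting value coincides with $\gcd(U)$; once that is in place, the remaining assertions are routine bookkeeping with the monotonicity of $\gcd$ under inclusion.
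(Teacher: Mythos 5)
Your proof is correct, and it takes a route that differs in a meaningful way from the paper's. Both arguments reduce the lemma to the equivalence ``$(\rmnum{2})$ holds if and only if $\gcd\big(\bigcup_{j=1}^{\infty}B_j\big)=1$'' and both ultimately rest on the well-ordering of the positive integers, but the mechanics diverge from there. The paper proceeds by contradiction: assuming every finite subfamily has gcd at least $2$, it forms the set $D$ of \emph{all} gcds over finite subfamilies, takes its least element $d$, and splits into two cases --- either some $B_{j_0}$ satisfies $d\nmid\gcd(B_{j_0})$, in which case adjoining $B_{j_0}$ strictly lowers the gcd and contradicts minimality, or $d$ divides $\gcd(B_j)$ for every $j$, which contradicts $\gcd\big(\bigcup_{j}B_j\big)=1$. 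You instead argue directly: the initial-segment gcds $d_n=\gcd(B_1\cup\cdots\cup B_n)$ form a divisibility-descending chain of positive integers, hence stabilise at some $d_N$, and your two divisibility checks ($g\mid d_N$ and $d_N\mid g$) identify the stable value with the global gcd $g$. Your version buys something the paper's does not: it is constructive in the sense that the witnessing finite set can always be taken to be an initial segment $\{1,\dots,N\}$, and it shows more generally that $\gcd\big(\bigcup_{j=1}^{\infty}B_j\big)$ is always attained on a finite initial segment, whether or not it equals $1$. The paper's minimal-element argument avoids setting up the chain but pays for it with an indirect, two-case analysis. Both proofs are complete; your stabilisation step and the identification $g=d_N$, the points you flag as delicate, are argued correctly.
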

	\begin{proof}
		It suffices to prove that the statement $(\rmnum{2})$ is true if and only if
		\begin{equation}\label{sfgcd1}
			\gcd\Big( \bigcup_{j=1}^{\infty} B_{j} \Big) = 1.
		\end{equation}
The necessity is obvious, and  we prove  the sufficiency by contradiction.

Assume that the statement $(\rmnum{2})$ does not hold, that is, 	for all finite subset $\mathcal{I}\subset \mathbb{N}_{+}$,
		\begin{equation*}
			\gcd \Big( \bigcup_{i \in \mathcal{I}} B_{i} \Big) \ge 2.
		\end{equation*}
		We write
		\begin{equation} \label{equation_gcd_partially_ordered_set}
			D = \{\gcd \big( \bigcup_{i \in \mathcal{I}} B_{i} \big): \mathcal{I} \subseteq \mathbb{N}_{+} \text{ is finite}\}.
		\end{equation}
		It is clear that  $D$ is  well-ordered with respect to $\leq$, and $D$ contains the least element $d\ge2$. Without loss of generality, we assume that a finite subset $\mathcal{I}=\{1,2,\ldots, n\} \subseteq \mathbb{N}_{+}$ such that
$$
d = \gcd \Big( \bigcup_{i=1}^n B_{i} \Big)\ge2.
$$

If there exists a positive integer $j_{0}\geq n+1$ such that $d \nmid \gcd(B_{j_{0}})$, then it immediately follows that
		\begin{equation*}
				\gcd\Big( \Big(\bigcup_{j=1}^{n} B_{j}\Big) \bigcup B_{j_{0}}\Big) < d,
		\end{equation*}
		which contradicts the fact that $d$ is the least element of $D$. Otherwise, if $d\mid \gcd(B_j)$ for all $j>n+1$, then we have $d\mid\gcd\big( \bigcup_{j=1}^{\infty} B_{j} \big)$ which contradicts \eqref{sfgcd1}.
	\end{proof}

	Let $\sigma$ denote the left shift on the symbolic space $\Omega$, that is,
	\begin{equation*}
		\sigma(\boldsymbol{\omega}) = \omega_{2}\omega_{3}\cdots\omega_{k}\cdots
	\end{equation*}
	for $\boldsymbol{\omega} =\omega_{1}\omega_{2}\cdots\omega_{k}\cdots \in \Omega$.

		\begin{lemma}\label{lemma_convergence_of_word}
			Let $\mathbb{P}$ be the Bernoulli measure on $\Omega$ given by \eqref{def_BPM} with respect to a positive  probability vector $\bp$.  Given $\boldsymbol{\alpha} \in \Omega$, there exists $\Omega_{0}\subset \Omega $ with $\mathbb{P}(\Omega_0)=1$ such that for each $\boldsymbol{\omega} \in \Omega_{0}$,  we have that
$$
\lim_{j\to \infty } \sigma^{k_{j}}(\boldsymbol{\omega}) =\boldsymbol{\alpha},
$$
for some  strictly increasing  sequence $\{k_{j}\}_{j=1}^{\infty}$.
		\end{lemma}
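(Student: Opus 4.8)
The plan is to reduce the statement to a statement about infinitely many occurrences of finite blocks, and then to exploit the independence built into the Bernoulli measure. Recall that in the metric $d(\boldsymbol{u},\boldsymbol{v})=2^{-|\boldsymbol{u}\wedge\boldsymbol{v}|}$, convergence $\sigma^{k_j}(\boldsymbol{\omega})\to\boldsymbol{\alpha}$ is equivalent to $|\sigma^{k_j}(\boldsymbol{\omega})\wedge\boldsymbol{\alpha}|\to\infty$; that is, $\sigma^{k_j}(\boldsymbol{\omega})$ agrees with $\boldsymbol{\alpha}$ on an arbitrarily long initial segment. Hence it suffices to produce, for $\mathbb{P}$-a.e. $\boldsymbol{\omega}$, positions $k_j$ at which longer and longer prefixes of $\boldsymbol{\alpha}$ begin to appear.

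For each $m\ge 1$ write $\boldsymbol{\alpha}|m=\alpha_1\cdots\alpha_m$, and let $A_m$ be the event that the block $\alpha_1\cdots\alpha_m$ occurs as $\omega_{k+1}\cdots\omega_{k+m}$ for infinitely many $k$. First I would show $\mathbb{P}(A_m)=1$ for each fixed $m$. Under the Bernoulli measure the coordinates $\omega_1,\omega_2,\ldots$ are independent, so consider the disjoint windows $W_i=\{(i-1)m+1,\ldots,im\}$ and the events $E_i=\{\omega_{(i-1)m+1}\cdots\omega_{im}=\alpha_1\cdots\alpha_m\}$. These are independent, and since $\mathbf{p}$ is positive, each has probability $q:=p_{\alpha_1}p_{\alpha_2}\cdots p_{\alpha_m}>0$. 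As $\sum_{i}\mathbb{P}(E_i)=\sum_i q=\infty$, the second Borel--Cantelli lemma gives that infinitely many $E_i$ occur $\mathbb{P}$-a.s.; each such $E_i$ is an occurrence of $\alpha_1\cdots\alpha_m$, so $\mathbb{P}(A_m)=1$.

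Set $\Omega_0=\bigcap_{m=1}^\infty A_m$, which still has $\mathbb{P}(\Omega_0)=1$ as a countable intersection of full-measure events. Fix $\boldsymbol{\omega}\in\Omega_0$ and extract the sequence $\{k_j\}$ by a diagonal argument: having chosen $k_1<\cdots<k_{j-1}$, use that $\alpha_1\cdots\alpha_j$ occurs infinitely often (the event $A_j$) to pick an occurrence starting at some position $k_j+1$ with $k_j>k_{j-1}$, so that $\omega_{k_j+i}=\alpha_i$ for $i=1,\ldots,j$. Then $\{k_j\}$ is strictly increasing and $|\sigma^{k_j}(\boldsymbol{\omega})\wedge\boldsymbol{\alpha}|\ge j$, whence $d(\sigma^{k_j}(\boldsymbol{\omega}),\boldsymbol{\alpha})\le 2^{-j}\to 0$, i.e.\ $\sigma^{k_j}(\boldsymbol{\omega})\to\boldsymbol{\alpha}$.

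The argument is essentially routine once the reduction is made; the only point demanding care is that the alphabet $\mathbb{N}$ is infinite, so a priori a fixed letter could carry vanishing mass. This is exactly where the hypothesis that $\mathbf{p}$ is \emph{positive} enters: it guarantees $q=p_{\alpha_1}\cdots p_{\alpha_m}>0$ for every fixed finite prefix, which is what makes the Borel--Cantelli sum diverge. Everything else---the independence of disjoint windows and the diagonal extraction---is standard, and one could equally phrase the middle step through the ergodicity of the shift $\sigma$ under $\mathbb{P}$.
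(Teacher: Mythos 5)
Your proof is correct. It shares the overall skeleton of the paper's argument (show that each finite prefix $\alpha_1\cdots\alpha_m$ occurs infinitely often almost surely, intersect over $m$, then extract $\{k_j\}$ inductively), but the probabilistic engine is different: you apply the second Borel--Cantelli lemma to the independent events $E_i$ on disjoint windows, whereas the paper applies Kolmogorov's strong law of large numbers to i.i.d.\ indicator variables $X_i^{(q)}$ attached to windows spaced more than $q$ apart, concluding that the asymptotic \emph{frequency} of occurrences equals $p_{\alpha_1}\cdots p_{\alpha_q}>0$ and in particular is positive. Your route is the more economical one: Borel--Cantelli delivers exactly the qualitative statement needed (``infinitely many occurrences''), with no frequency bookkeeping and none of the $\epsilon$-and-$K_l$ estimates that occupy the second half of the paper's proof. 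What the SLLN buys in exchange is quantitative density information about the set of occurrence positions, but the lemma never uses it, so nothing is lost by your approach. Both arguments hinge on the same two points you correctly isolate: independence of coordinates under the Bernoulli measure across disjoint index sets, and positivity of $\mathbf{p}$ ensuring $p_{\alpha_1}\cdots p_{\alpha_m}>0$ for each fixed $m$.
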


	\begin{proof}
		Since $\boldsymbol{\alpha} \in \Omega$ is given, we write $\boldsymbol{\alpha}=\alpha_1\alpha_2\alpha_3\cdots$.
		For each integer  $q\geq1$, choose a sequence $\{ k_{j}^{(q)}\}_{j=1}^{\infty}$ of positive integers  such that $k_{1}^{(q)} = 1$ and $k_{i+1}^{(q)} - k_{i}^{(q)} > q$ for all   $j>1$.

Fix $q$, for each integer $i\geq 1$,  we define a random variable $X_i^{(q)}: \Omega \to \R$ by
		\begin{equation*}
			X_{i}^{(q)}(\boldsymbol{\omega})=
			\begin{cases}
				1, & \omega_{k_{i}^{(q)}} \omega_{k_{i}^{(q)}+1} \dots \omega_{k_{i}^{(q)}+q-1} = \alpha_{1} \alpha_{2} \cdots \alpha_{q}, \\
				0, & \text{otherwise}.
			\end{cases}
		\end{equation*}
Since the Bernoulli measure  $\mathbb{P}$  is generated by the probability vector $\bp=(p_1,p_2,\ldots)$ where $p_j>0$ for all $j>0$, the expectation of $X_i^{(q)}$ is given by
		\begin{equation*}
			\mathbb{E}[X_{i}^{(q)}] = \mathbb{P}(X_{i}^{(q)} = 1) = p_{\alpha_{1}} p_{\alpha_{2}} \dots p_{\alpha_{q}}>0,
		\end{equation*}
		for all $i\geq1$. Hence $\{ X_{i}^{(q)}(\boldsymbol{\omega}) \}_{i=1}^{\infty}$ is a sequence of independently identically distributed integrable random variables.

By the  Kolmogorov strong law of large numbers, there exists a subset $\Omega_{q} \subseteq \Omega$ with $\mathbb{P}(\Omega_q)=1$ such that for all $\boldsymbol{\omega} \in \Omega_{q}$,
		\begin{equation*}
			\lim\limits_{n\to\infty} \dfrac{1}{n} \sum\limits_{i=1}^{n} X_{i}^{(q)}(\boldsymbol{\omega}) = E[X_{i}^{(q)}] = p_{\alpha_{1}} p_{\alpha_{2}} \dots p_{\alpha_{q}},
		\end{equation*}
which is equivalent to
		\begin{equation}\label{equation_Kolmogorov_strong_law_of_large_number_in_q_situation}
			\lim\limits_{n\to\infty} \dfrac{\# \{ 1\leq i \leq n : \omega_{k_{i}^{(q)}} \omega_{k_{i}^{(q)}+1} \dots \omega_{k_{i}^{(q)}+q-1} = \alpha_{1} \alpha_{2} \dots \alpha_{q} \}}{n} = p_{\alpha_{1}} p_{\alpha_{2}} \dots p_{\alpha_{q}}.
		\end{equation}

Since $\mathbb{P}(\Omega_q)=1$ for all integers $q\geq 1$, we write
		\begin{equation*}
			\Omega_{0} = \bigcap_{q=1}^{\infty} \Omega_{q},
		\end{equation*}
and it is clear that   $\mathbb{P}(\Omega_{0}) = 1.$  Hence for all $\boldsymbol{\omega} \in \Omega_{0}$ and all integers $q\geq 1$, we have that
		\begin{equation}\label{equation_Kolmogorov_strong_law_of_large_number}
			\lim\limits_{n\to\infty} \dfrac{\# \{ 1\leq i \leq n : \omega_{k_{i}^{(q)}} \omega_{k_{i}^{(q)}+1} \dots \omega_{k_{i}^{(q)}+q-1} = \alpha_{1} \alpha_{2} \dots \alpha_{q} \}}{n} = p_{\alpha_{1}} p_{\alpha_{2}} \dots p_{\alpha_{q}}>0.
		\end{equation}

For each given  $\boldsymbol{\omega} =\omega_1\omega_2\cdots\omega_n\cdots\in \Omega_{0}$, we define a sequence of integers $k_j$ inductively. First, for $q=1$, by \eqref{equation_Kolmogorov_strong_law_of_large_number}, there exists a sufficiently large integer $K_{1}\geq 1$ such that
$$
\# \{ 1\leq i \leq K_1 : \omega_{k_{i}^{(1)}} = \alpha_{1}\}>\frac{1}{2}K_1 \cdot p_{\alpha_{1}}>1.
$$
We choose $i_1\in \{ 1\leq i \leq K_1 : \omega_{k_{i}^{(1)}} = \alpha_{1}\}$ and have $\omega_{k_{i_1}^{(1)}} = \alpha_{1}$. By setting $k_1=k_{i_1}^{(1)}-1$, we have that
$$
\sigma^{k_{1}}(\boldsymbol{\omega}) =\alpha_1\omega_{k_1+2}\omega_{k_1+3}\cdots.
$$

Assume that the integers $\{ k_{j} \}_{j=1}^{l}$ have been chosen and  satisfy that   $k_{1} < k_{2} < \dots < k_{l}$ and
\begin{equation}\label{shifpro}
\sigma^{k_{j}}(\boldsymbol{\omega}) =\alpha_1\cdots\alpha_{j}\omega_{k_{j}+j+1}\omega_{k_{j}+j+2}\cdots.
\end{equation}
for all $1\leq j \leq l$. For  $q = l+1$, letting $\epsilon = \frac{1}{2}p_{\alpha_{1}} p_{\alpha_{2}} \dots p_{\alpha_{l+1}}$,  by \eqref{equation_Kolmogorov_strong_law_of_large_number}, there exists an  integer  $K_{l+1}\geq 1$ such that 	
		$$
\# \{1\leq i \leq n: \omega_{k_{i}^{(l+1)}} \omega_{k_{i}^{(l+1)}+1} \dots \omega_{k_{i}^{(l+1)}+l} = \alpha_{1} \alpha_{2} \dots \alpha_{l+1} \} \geq n (p_{\alpha_{1}} p_{\alpha_{2}} \dots p_{\alpha_{l+1}} - \epsilon).
		$$
for all $n>K_{l+1}$. For a sufficiently  large $n$, we choose $i_{l+1}$ such that $1\le i_{l+1}\le n$, $k_{i_{l+1}}^{(l+1)} > k_{{l}}+1$  and
		\begin{equation*}
			\omega_{k_{i_{l+1}}^{(l+1)}} \omega_{k_{i_{l+1}}^{(l+1)}+1} \dots \omega_{k_{i_{l+1}}^{(l+1)}+l} = \alpha_{1} \alpha_{2} \dots \alpha_{l+1}.
		\end{equation*}
	 Setting  $k_{l+1} = k_{i_{l+1}}^{(l+1)}-1$,  we have that
$$
\sigma^{k_{l+1}}(\boldsymbol{\omega}) =\alpha_1\cdots\alpha_{l+1}\omega_{k_{l+1}+l}\omega_{k_{l+1}+l+1}\cdots.
$$

Hence for each given $\boldsymbol{\omega}$, we obtain a sequence $\{k_j\}_{j=1}^\infty$ satisfying \eqref{shifpro}, and it follows  that
$$
d(\sigma^{k_{j}}(\boldsymbol{\omega}),\bu) \leq  2^{-j}
$$
for all $j\geq 1$. Therefore  $\{ \sigma^{k_{j}} (\boldsymbol{\omega}) \}$ converges to $\boldsymbol{\alpha} \in \Omega$,  and the conclusion holds.

	\end{proof}
	
	Finally, we are ready to prove that the mapping $\mu^{\mathbf{n}}$ is a spectral random  measure for $\mathbb{P}$-a.e. $\boldsymbol{\omega} \in \Omega$.
	
\begin{proof}[Proof of Theorem \ref{thm_TC}]
Fix a positive integral sequence $\bn=\{n_k\}_{k=1}^\infty$. We only consider that  $\mathbf{n}$ is bounded since  the conclusion follows from  Theorem \ref{thm_ub} if  $\mathbf{n}$ is unbounded.  Fix a Bernoulli measure $\mathbb{P}$ on $\Omega$ associated with a probability vector $\bp=(p_{1}, p_{2}, \dots)$. 
Without loss of generality, we assume that $p_{k} \geq p_{k+1} > 0$ for all $k\geq 1$.
		
For the given sequence $\{(N_{k}, B_{k})\}_{k=1}^{\infty}$ where  $B_{k} \subseteq \N$ and $\# B_k \geq 2$ for all $k\geq 1$, by Lemma~\ref{lemma_gcd_of_B}, the proof is divided into the following two cases: $(\rmnum{1})$ there exists a finite subset $\mathcal{I} \subseteq \mathbb{N}_{+}$ such that $	\gcd \big( \bigcup_{j \in \mathcal{I}} B_{j} \big) = 1$; $(\rmnum{2})$ $\gcd \big( \bigcup_{j=1}^{\infty} B_{j} \big) > 1$.

 $(\rmnum{1})$:  We assume that  $\mathcal{I} = \{j_{1}, j_{2}, \dots, j_{m}\} \subseteq \mathbb{N}$  such that  $\gcd \big( \bigcup_{j \in \mathcal{I}} B_{j} \big) = 1.$ Since $0\in B_k$ for all $k>0$, we have
 \begin{equation} \label{equation_gcd_B-B=1}
\gcd \bigg( \bigcup_{j \in \mathcal{I}} (B_{j} - B_{j}) \bigg) = 1.
\end{equation}		
Setting  $\boldsymbol{\alpha} = \alpha_{1} \alpha_{2}\cdots = (j_{1} j_{2}\dots j_{m})^{\infty}\in \Omega$. By Lemma $\ref{lemma_convergence_of_word}$, there exists a full measure subset $\Omega_{0} \subseteq \Omega$ such that for every $\boldsymbol{\omega}\in\Omega_{0}$, there exists  a strictly  increasing   sequence $\{k_{j}\}_{j=1}^{\infty}$ satisfying that
$$
\lim_{j\to \infty } \sigma^{k_{j}}(\boldsymbol{\omega}) =\boldsymbol{\alpha}.
$$

Since $\mathbf{n}$ is bounded, we write  $M = \sup\{n_{k}: k\geq 1\}$ and  $\Sigma = \{1, 2, \dots, M\}^{\mathbb{N}}$ for the compact symbolic space over the alphabet $\{1,2,\dots, M\}$.  Let $\nu_{>k}^{\mathbf{n}}(\boldsymbol{\omega})$ and  $\nu_{>k}(\boldsymbol{\omega})$ be given  respectively by  \eqref{def_nu_words} and \eqref{def_nu1geqk} with respect to  $\mu^{\mathbf{n}}(\boldsymbol{\omega})$. We write  $\mathbf{n}^{(k)} = \{ n_{k+l} \}_{l=1}^{\infty}$, and it is clear that for each $k\geq 1$,
\begin{equation*}
		\nu_{>k}^{\mathbf{n}}(\boldsymbol{\omega}) = \mu^{\mathbf{n}^{(k)}}(\sigma^{k}(\boldsymbol{\omega})).
\end{equation*}

Fix $\boldsymbol{\omega} \in \Omega_{0}$,  and let $\{k_{j}\}_{j=1}^{\infty}$ be the strictly increasing sequence  such that
$$
\lim_{j\to \infty } \sigma^{k_{j}}(\boldsymbol{\omega}) =\boldsymbol{\alpha}.
$$
 Since  $\Sigma$ is compact, the sequence
			\begin{equation*}
				\{ (\sigma^{k_{j}}(\boldsymbol{\omega}), \{ n_{k_{j}+l} \}_{l=1}^{\infty}) \}_{j=1}^{\infty}
			\end{equation*}			
has a convergent subsequence in $\Omega \times \Sigma$. Without loss of generality, we assume that $\{ (\sigma_{k_{j}}(\boldsymbol{\omega}), \{ n_{k_{j}+l} \}_{l=1}^{\infty}) \}_{j=1}^{\infty}$ converges to $(\boldsymbol{\alpha},\mathbf{m})$ for some sequence $\mathbf{m} = \{m_{k}\} \in \Sigma$.

By the same argument in the proof of Lemma \ref{lemma_continuous_mapping_from_symbol_space_to_Prob_space}, we have  that $\{ \nu_{>k_j}^{\mathbf{n}}(\boldsymbol{\omega}) \}_{j=1}^{\infty}$ weakly converges to $\mu^{\mathbf{m}}(\boldsymbol{\alpha})$.  By Theorem $\ref{thm_IPZSE}$, we have that $\mathcal{Z}(\mu^{\mathbf{m}}(\boldsymbol{\alpha})) = \emptyset$. Therefore,  by Theorem $\ref{thm_ipzs}$, we have that $\mu^{\mathbf{n}}(\boldsymbol{\omega})$ is a spectral measure for  every $\boldsymbol{\omega} \in \Omega_{0}$. Since $\mathbb{P}(\Omega_0)=1$, the  conclusion holds.

 $(\rmnum{2})$: We assume   $\gcd\big( \bigcup_{j=1}^{\infty} B_{j}  \big) = d > 1$. For each $j\geq 1$, we write $B_{j}' = B_{j} /d$, and $(N_{j}, B_{j}')$ is also an admissible pair since $\gcd\big( \bigcup_{j=1}^{\infty} B_{j}'\big) = 1$. It implies that there exists a finite subset $\mathcal{I}\subseteq \mathbb{N}$  such that  $\gcd \big( \bigcup_{j \in \mathcal{I}} B_{j}' \big) = 1.$

For every  $\boldsymbol{\omega} \in \Omega$,  let	
\begin{equation*}
			(\mu^{\mathbf{n}})'(\boldsymbol{\omega}) = \delta_{N_{\omega_{1}}^{-n_{1}}B_{\omega_{1}}'} * \delta_{N_{\omega_{1}}^{-n_{1}}N_{\omega_{2}}^{-n_{2}}B_{\omega_{2}}'} * \dots * \delta_{N_{\omega_{1}}^{-n_{1}}N_{\omega_{2}}^{-n_{2}}\dots N_{\omega_{k}}^{-n_{k}}B_{\omega_{k}}'} * \cdots .
		\end{equation*}
be the infinite convolution associated with  $\{(N_{j}, B_{j}')\}_{j=1}^{\infty}$ and a fixed $\mathbf{n}$. By   $(\rmnum{1})$,  we have that $(\mu^{\mathbf{n}})'(\boldsymbol{\omega})$ is a spectral measure for $\mathbb{P}$-a.e. $\boldsymbol{\omega} \in \Omega$. Setting  $T_{d, 0}(x)= d x+0$, we  have that
\begin{equation*}
			\mu^{\mathbf{n}}(\boldsymbol{\omega}) = (\mu^{\mathbf{n}})'(\boldsymbol{\omega}) \circ T_{d,0}^{-1}.
		\end{equation*}
By Lemma \ref{lemma_invariant_of_spectrality_under_lineartransf}, we obtain that $\mu^{\mathbf{n}}(\boldsymbol{\omega})$ is a spectral measure for $\mathbb{P}$-a.e. $\boldsymbol{\omega} \in \Omega$.
	\end{proof}

	\section{Existence and tightness of spectral random measures}\label{sec_ex}
	
In this section, we provide some sufficient conditions for  the existence and tightness of random measures.
Recall that 	a sequence $\{( N_k,B_k)\}_{k=1}^\infty $ satisfies  remainder bounded condition (RBC) if
\begin{equation}\label{cdn_RBC}
\sum_{k=1}^{\infty} \frac{\# B_{k,2}}{\# B_k} < \infty,
\end{equation}
		where $B_{k,1}=B_k \cap \{0,1,\cdots,N_k-1\}$ and $B_{k,2}=B_k \backslash B_{k,1}.$

	\begin{proof}[Proof of Theorem \ref{thm_tight}]
Fix  $\ba=\alpha_1\alpha_2\cdots \in \Omega$. Recall that $\Omega=\mathbb{N}^{\mathbb{N}}$,  and we obtain  $\{( N_{\alpha_k},B_{\alpha_k})\}_{k=1}^\infty $ from the given sequence $\{( N_k,B_k)\}_{k=1}^\infty $. To show the existence of the infinite convolution $\mu(\ba)$, by Theorem \ref{thm_weak-convergence-infinite-convolution}, it is equivalent to prove that
		\begin{equation}\label{eq_existence}
			\sum_{k=1}^{\f} \dfrac{1}{\# B_{\alpha_k}} \sum_{b \in B_{\alpha_{k}}} \dfrac{b}{N_{\alpha_{1}}N_{\alpha_{2}}\cdots N_{\alpha_{k}}+b}<\infty.
		\end{equation}

First, we define a set $T=\{t_1,t_2,\ldots,\}$		inductively.  Let	$t_{1}= 1$, and we write  $t_2=\inf\{n:\alpha_n\ne\alpha_{t_1}\}$ if $\inf\{n:\alpha_n\ne\alpha_{t_1}\}<\infty$ otherwise we write $T=\{t_1\}$.  Suppose that $t_{k-1}$ has been defined. Then we write $t_k=\inf\{n:\alpha_n\ne\alpha_{t_m}, \text{for all} \ 1\le m<k\}$ if $\inf\{n:\alpha_n\ne\alpha_{t_m}, \text{for all} \ 1\le m<k\}<\infty$ otherwise we set $T=\{t_1,t_2,\ldots, t_{k-1}\}$. Then we obtain a sequence of integers $t_k$, and let $T$ be the collection of all these integers $t_k$.

Letting  $\mathbf{N}_{\alpha_{t_{k-1}}}=N_{\alpha_{t_1}}N_{\alpha_{t_2}}\cdots N_{\alpha_{t_{k-1}}}$,  we have that
		\begin{eqnarray}\label{eq_existence_rewritten_form}
		\sum_{k=1}^{\f} \dfrac{1}{\# B_{\alpha_k}} \sum_{b \in B_{\alpha_{k}}} \dfrac{b}{N_{\alpha_{1}}N_{\alpha_{2}}\cdots N_{\alpha_{k}}+b} &=& \sum_{k=1}^{\# T} \sum_{\{l:\alpha_l=\alpha_{t_k}\}} \dfrac{1}{\# B_{\alpha_{t_{k}}}} \sum_{b \in B_{\alpha_{t_{k}}}} \dfrac{b}{N_{\alpha_{1}}N_{\alpha_{2}}\dots N_{\alpha_{l}} + b}  \nonumber \\
&\le&	\sum_{k=1}^{\# T} \sum_{j=1}^{\# \{l:\alpha_l=\alpha_{t_k}\}} \dfrac{1}{\# B_{\alpha_{t_{k}}}} \sum_{b \in B_{\alpha_{t_{k}}}} \dfrac{b}{\mathbf{N}_{\alpha_{t_{k-1}}}N_{\alpha_{t_k}}^j + b}.   \label{eq_existence_rewritten_form_1}
		\end{eqnarray}

Next,  for each fixed $k\in\{1,2\cdots,\# T\}\cap\N$, we estimate the terms in \eqref{eq_existence_rewritten_form_1} separately.
Since $\sup_{k\ge 1}\frac{\log\max B_k}{\log N_k}<\f$, there exists a positive integer $C>0$ such that 		
		\begin{equation}\label{eq_Bk<NkC}
			\max \{b:b\in B_k\}<N_k^C,
		\end{equation}
		for all $k\ge1$.

	If $\# \{l:\alpha_l=\alpha_{t_k}\}\le C$, by writing  $B_{\alpha_k,1}=B_{\alpha_k} \cap \{0,1,\cdots,N_{\alpha_k}-1\}$ and $B_{\alpha_k,2}=B_{\alpha_k} \backslash B_{\alpha_k,1}$, we have that
	
		\begin{eqnarray*} 
&& \hspace{-1cm}\sum_{j=1}^{\# \{l:\alpha_l=\alpha_{t_k}\}} \dfrac{1}{\# B_{\alpha_{t_{k}}}} \sum_{b \in B_{\alpha_{t_{k}}}} \dfrac{b}{\mathbf{N}_{\alpha_{t_{k-1}}}N_{\alpha_{t_k}}^j + b} \leq \sum_{j=1}^{C} \dfrac{1}{\# B_{\alpha_{t_{k}}}} \sum_{b \in B_{\alpha_{t_{k}}}} \dfrac{b}{\mathbf{N}_{\alpha_{t_{k-1}}}N_{\alpha_{t_k}}^j + b}\\
&&=\sum_{j=1}^{C} \dfrac{1}{\# B_{\alpha_{t_{k}}}} \sum_{b \in B_{\alpha_{t_{k}},1}} \dfrac{b}{\mathbf{N}_{\alpha_{t_{k-1}}}N_{\alpha_{t_k}}^j + b}+\sum_{j=1}^{C} \dfrac{1}{\# B_{\alpha_{t_{k}}}} \sum_{b \in B_{\alpha_{t_{k}},2}} \dfrac{b}{\mathbf{N}_{\alpha_{t_{k-1}}}N_{\alpha_{t_k}}^j + b}\\
&&\le\frac{1}{\mathbf{N}_{\alpha_{t_{k-1}}}}\sum_{j=1}^{C} \dfrac{1}{\# B_{\alpha_{t_{k}}}} \sum_{b \in B_{\alpha_{t_{k}},1}} \dfrac{b}{N_{\alpha_{t_k}}^j}+\sum_{j=1}^{C} \dfrac{\# B_{\alpha_{t_{k}},2}}{\# B_{\alpha_{t_{k}}}}\\
&&\le\frac{1}{\mathbf{N}_{\alpha_{t_{k-1}}}}\sum_{j=1}^{C} \dfrac{N_{\alpha_{t_k}}-1}{N_{\alpha_{t_k}}^j}+C\cdot \dfrac{\# B_{\alpha_{t_{k}},2}}{\# B_{\alpha_{t_{k}}}}\\
&&\le\frac{1}{\mathbf{N}_{\alpha_{t_{k-1}}}}+C\cdot \dfrac{\# B_{\alpha_{t_{k}},2}}{\# B_{\alpha_{t_{k}}}}.
		\end{eqnarray*}

If $\# \{l:\alpha_l=\alpha_{t_k}\}>C$, by $\eqref{eq_Bk<NkC}$,  we have
$$
		\sum_{j=1}^{C} \dfrac{1}{\# B_{\alpha_{t_{k}}}} \sum_{b \in B_{\alpha_{t_{k}}}} \dfrac{b}{\mathbf{N}_{\alpha_{t_{k-1}}}N_{\alpha_{t_k}}^j + b}\le\frac{1}{\mathbf{N}_{\alpha_{t_{k-1}}}}+C\cdot \dfrac{\# B_{\alpha_{t_{k}},2}}{\# B_{\alpha_{t_{k}}}},
$$		
and 	
		\begin{equation*}
			\begin{aligned}
				\sum_{j=C+1}^{\# \{l:\alpha_l=\alpha_{t_k}\}} \dfrac{1}{\# B_{\alpha_{t_{k}}}} \sum_{b \in B_{\alpha_{t_{k}}}} \dfrac{b}{\mathbf{N}_{\alpha_{t_{k-1}}}N_{\alpha_{t_k}}^j + b}
				&\le\frac{1}{\mathbf{N}_{\alpha_{t_{k-1}}}}\sum_{j=C+1}^{\# \{l:\alpha_l=\alpha_{t_k}\}} \dfrac{1}{\# B_{\alpha_{t_{k}}}} \sum_{b \in B_{\alpha_{t_{k}}}} \dfrac{b}{N_{\alpha_{t_k}}^j}\\
				&\le\frac{1}{\mathbf{N}_{\alpha_{t_{k-1}}}}\sum_{j=C+1}^{\# \{l:\alpha_l=\alpha_{t_k}\}}\dfrac{\max\{b:b\in B_{\alpha_{t_k}}\}}{N_{\alpha_{t_k}}^j}\\
				&\le\frac{1}{\mathbf{N}_{\alpha_{t_{k-1}}}}.
			\end{aligned}
		\end{equation*}

Since $\{( N_k,B_k)\}_{k=1}^\infty $ satisfies RBC, by\eqref{cdn_RBC}, we have
		\begin{eqnarray*}
				\sum_{k=1}^{\f} \dfrac{1}{\# B_{\alpha_k}} \sum_{b \in B_{\alpha_{k}}} \frac{b}{N_{\alpha_{1}}N_{\alpha_{2}}\cdots N_{\alpha_{k}}+b} &\le&\sum_{k=1}^{\# T}\bigg(\frac{2}{\mathbf{N}_{\alpha_{t_{k-1}}}}+C\cdot \dfrac{\# B_{\alpha_{t_{k}},2}}{\# B_{\alpha_{t_{k}}}}\bigg)< \f,
		\end{eqnarray*}
 and we obtain the inequality \eqref{eq_existence}, which implies that infinite convolution $\mu^{}(\ba)$ exists for all $\ba\in\Omega$.

		Next we  prove  $\Phi$ is tight.  For each $k\in\mathbb{N}$, we write
		\begin{eqnarray} \label{def_phi_k}
&& \Phi_{k} = \{ \delta_{(N_{k}M_{1})^{-1}B_{k}} * \delta_{(N_{k}^{2}M_{1}M_{2})^{-1}B_{k}} * \dots  * \delta_{(N_{k}^{j}M_{1}M_{2} \dots M_{j})^{-1}B_{k}} * \dots : \\
 &&\hspace{6cm}  M_{j} \in \mathbb{N}\cup \{\infty\} \text{ for all } j\geq 1  \},                \nonumber
		\end{eqnarray}
where $\delta_{(N_{k}^{j}M_{1}M_{2} \dots M_{j})^{-1}B_{k}}=\delta_{0}$ if $M_{1}M_{2} \dots M_{j}=\infty$.

Arbitrarily choose an element from $\Phi_k$, 	and we write it as
\begin{equation}\label{def_etaM}
\eta_{\mathbf{M}} = \delta_{(N_{k}M_{1})^{-1}B_{k}} * \delta_{(N_{k}^{2}M_{1}M_{2})^{-1}B_{k}} * \dots * \delta_{(N_{k}^{j}M_{1}M_{2} \dots M_{j})^{-1}B_{k}} * \dots
\end{equation}
to emphasized the dependence on the sequence $\mathbf{M}=\{M_k\}_{k=1}^\infty$. By \eqref{eq_Bk<NkC}, for each element $x$ contained in the set
		\begin{equation*}
			(N_{k}M_{1})^{-1}B_{k, 1} + \dots +  (N_{k}^{C}M_{1}M_{2} \dots M_{C})^{-1}B_{k,1} + (N_{k}^{C+1}M_{1}M_{2} \dots M_{C+1})^{-1}B_{k} + \cdots,
		\end{equation*}
		we have
		\begin{equation*}
			0\le x \leq \dfrac{1}{M_{1}} \cdot \bigg( \sum_{j=1}^{C} \dfrac{N_{k}-1}{N_{k}^{j}} + \sum_{j=C+1}^{\infty} \dfrac{\max B_{k}}{N_{k}^{j}} \bigg) < \dfrac{2}{M_{1}},
		\end{equation*}
		which implies
		\begin{equation*}
			\eta_{\mathbf{M}}\Big(\big[0,\dfrac{2}{M_{1}}\big)\Big) \geq \bigg( \dfrac{\# B_{k,1}}{\# B_{k}} \bigg)^{C} \geq 1 - C\dfrac{\# B_{k,2}}{\# B_{k}},
		\end{equation*}
		for all $\eta_{\mathbf{M}} \in \Phi_{k}$, where the second inequality follows from Bernoulli's inequality. Therefore,
		\begin{equation*}
			\eta_{\mathbf{M}}\Big(\big[\dfrac{2}{M_{1}}, \infty\big)\Big) = 1 - \eta_{\mathbf{M}}\Big(\big[0,\dfrac{2}{M_{1}}\big)\Big)  \leq C\dfrac{\# B_{k,2}}{\# B_{k}},
		\end{equation*}
		for all $\eta_{\mathbf{M}} \in \Phi_{k}$.
		
		Since $\{(N_{k}, B_{k})\}_{k=1}^{\infty}$ satisfies RBC, for each $\epsilon>0$, there exists $n_0>2$ such that
		\begin{equation*}
			\sum_{k=n_0}^{\infty} \dfrac{\# B_{k,2}}{\# B_{k}} < \dfrac{\epsilon}{C},
		\end{equation*}
and we write
		\begin{equation*}
			\begin{aligned}
				\Phi_{<n_0}=&\{\delta_{(N_{m_{1}}M_{1})^{-1}B_{m_{1}}} * \dots * \delta_{(N_{m_{1}}N_{m_{2}}\dots N_{m_{j}}M_{1}M_{2} \dots M_{j})^{-1}B_{m_{j}}} * \dots :\\
				& \hspace{3cm} M_{j} \in \mathbb{N}\cup \{\infty\}, m_{j} \in [1, n_0-1] \cap \mathbb{N} \text{ for all } j\in \mathbb{N} \}.
			\end{aligned}
		\end{equation*}
 Since $\Phi_{<n_0}$ only contains the infinite convolutions generated by finite admissible pairs $\{(N_{k}, B_{k})\}_{k=1}^{n_0-1}$, there exists a compact subset $K$ such that $spt(\eta) \subseteq K$ for all $\eta\in \Phi_{<n_0}$.

		By Lemma \ref{lem_factorization}, for each $\mu\in\Phi$, we may rearrange the order of the convolution,
		$$
		\mu=\mu_{<n_0}*\mu_{s_1}*\mu_{s_2}*\cdots*\mu_{s_k}*\cdots,
		$$
		where $\mu_{<n_0}\in\Phi_{< n_0}$, $\mu_{s_k}\in\Phi_{s_k}$, $s_k\ne s_j$ for all $k\ne j$. Recall \eqref{def_phi_k}  and \eqref{def_etaM}, it is clear that for the sequence $\mathbf{M}=\{M_k\}_{k=1}^\infty$ associated with $\mu_{s_k}\in\Phi_{s_k}$, we have  that  $N_{s_1}N_{s_2}\cdots N_{s_{k-1}}|M_1$. Hence, we have $spt(\mu_{<n_0})\subseteq K$, $\mu_{s_1}\big([2,+\f)\big)\le \frac{C \# B_{s_1,2}}{B_{s_1}}$ and
\begin{equation}\label{eq_spt}
\mu_{s_k}\Big(\big[\frac{2}{N_{s_1}N_{s_2}\cdots N_{s_{k-1}}},+\f\big)\Big)\le \frac{C \# B_{s_k,2}}{B_{s_k}},
\end{equation}		
for all $k\ge2$. We write $K'=K+[0,4]=\{a+c:a\in K,c\in[0,4]\}$ and it is clear that $K'$ is compact. By \eqref{eq_spt}, we have
		$$
		\mu(\R\setminus K')\le\sum_{k=n_0}^{\f}\frac{C \# B_{k,2}}{B_k}< \epsilon.
		$$
		
		Therefore, we obtain that $\Phi = \{\mu_{\ba} :\ba \in \Omega\}$ is tight.
	\end{proof}

		\begin{proof}[Proof of Corollary $\ref{thm_CC}$]
			Since $	\sup\limits_{n\geq1}\{N_{n}^{-1}b:b\in B_{n}\} < \infty$, we have that
			$$
			\sup_{k\ge 1}\frac{\log\max B_k}{\log N_k}<\f.
			$$ 			
			By Theorem \ref{thm_tight}, it implies that $\Phi$ is tight, and the conclusion immediately follows from Theorem \ref{thm_measurable} and Theorem \ref{thm_TC}.
		\end{proof}

Finally, we construct a spectral random measure without compact support, that is, there does not exist a compact set which contains the support of $\mu(\boldsymbol{\omega}) $ for all $\boldsymbol{\omega}\in \Omega$, but $\Phi$ is tight.

\begin{example}\label{ex_tnc}
			For each $k>0$,  write $N_{k} = 4^{2^{k-1}}$ and $B_{k} = \{0, 1, 2, \dots, 2^{2^{k-1}}-2, 4^{2^{k}}-1\}$, and it is clear that $\{(N_{k}, B_{k})\}_{k=1}^\infty $ is a sequence of  admissible pairs. Then $\Phi = \{ \mu(\boldsymbol{\omega})  : \boldsymbol{\omega} \in \Omega\}$ is tight. Moreover, for every positive integral sequence $\bn$ and every Bernoulli probability $\mathbb{P}$,  the mapping $\mu^\bn$ given by \eqref{def_random_measure} is a  spectral random measure for  $\mathbb{P}$-a.e. $\boldsymbol{\omega}$.

			Since
			$$
			\sum_{k=1}^{\f}\frac{\# B_{k,2}}{\# B_k}=\sum_{k=1}^{\f}\frac{1}{2^{2^{j-1}}}<\f,
			$$
			and
			$$
			\sup_{k\ge 1}\frac{\log \max B_k}{\log N_k}\le2,
			$$	
by Theorem \ref{thm_tight}, we have that $\mu(\boldsymbol{\omega})$ exists for  every  $ \boldsymbol{\omega} \in \Omega $ and  $\Phi = \{\mu(\omega) : \omega \in \Omega\}$ is tight. Hence,  by Theorem \ref{thm_TC}, $\mu^\bn$ is a  spectral random measure for  $\mathbb{P}$-a.e $\boldsymbol{\omega}$.

Let $\boldsymbol{\omega} = 123\dots \in \Omega$. 		Since
			\begin{equation*}
				\sum_{j=1}^{\infty} \dfrac{4^{2^{j}}-1}{N_{1} N_{2} \dots N_{j}} = \sum_{j=1}^{\infty} \dfrac{4^{2^{j}}-1}{4^{2^{j}-1}} = \infty,
			\end{equation*}
we have that  $spt(\mu(\boldsymbol{\omega}))$ is unbounded. 	Hence $spt(\mu(\boldsymbol{\omega}))$ is not contained in any compact subset of  $\mathbb{R}$.

	\end{example}

	\section{Intermediate-value Properties of Random Measures}\label{sec_IVP}
	
	In this section, we study the intermediate-value properties of spectral random measures, and we are able to construct various spectral random measures with rich geometric structure.
	
	For simplicity, we use finitely many admissible pairs $\{(N_{k}, B_{k})\}_{k=1}^{M}$ where $0\in B_k \subseteq \mathbb{N}$ for all $k>0$  and assume  $n_{k}=1$ for $k\geq 1$. We write $\Omega = \{1,\dots, M\}^{\mathbb{N}}$ for  the Corresponding symbolic space. Let $\mathbb{P}$ be the Bernoulli probability measure on $\Omega$ generated by  the probability vector $(p_{1},p_{2},\dots, p_{M})$. Let $\mu(\boldsymbol{\omega})$ be given by \eqref{def_corresponding_P_of_random_measure}. By Corollary \ref{cor_fad},  for every Bernoulli measure $\mathbb{P}$ on $\Omega$, $\mu(\boldsymbol{\omega})$ is a  spectral random measure for  $\mathbb{P}$-a.e. $\boldsymbol{\omega} \in \Omega$, and we have following conclusion on the dimensions.
	\begin{lemma}\label{lemma_dim_expression}
	Under the above assumptions. Hausdorff dimension and packing dimension of random measure $\mu$ are  given by
		\begin{equation}\label{equation_dim_expression}
			\dimh \mu=\dimp \mu= \dfrac{\sum_{k=1}^{M} p_{k} \log \# B_{k}}{\sum_{k=1}^{M} p_{k} \log N_{k}},
		\end{equation}
		for $\mathbb{P}$-a.e. $\boldsymbol{\omega} \in \Omega$.
	\end{lemma}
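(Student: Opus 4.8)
The plan is to realize each realization $\mu(\boldsymbol{\omega})$ as a homogeneous Moran measure, compute its local dimension, and then average over $\boldsymbol{\omega}$ by the strong law of large numbers. Write $\mathbf{N}_n(\boldsymbol{\omega}) = N_{\omega_1}\cdots N_{\omega_n}$ and $\mathbf{B}_n(\boldsymbol{\omega}) = \# B_{\omega_1}\cdots \# B_{\omega_n}$, and let $\Pi_{\boldsymbol{\omega}}\colon \prod_{k\ge 1} B_{\omega_k}\to\mathbb{R}$ be the coding map $\Pi_{\boldsymbol{\omega}}((b_k))=\sum_{k\ge1} b_k/\mathbf{N}_k(\boldsymbol{\omega})$, so that $\mu(\boldsymbol{\omega})=(\Pi_{\boldsymbol{\omega}})_*\mathbb{Q}_{\boldsymbol{\omega}}$, where $\mathbb{Q}_{\boldsymbol{\omega}}$ is the product of the uniform measures on the $B_{\omega_k}$. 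For a word $b_1\cdots b_n$ its image cylinder $I_n$ is an interval with $\mu(\boldsymbol{\omega})(I_n)=\mathbf{B}_n(\boldsymbol{\omega})^{-1}$ and $\mathrm{diam}(I_n)\asymp \mathbf{N}_n(\boldsymbol{\omega})^{-1}$, the implied constants being uniform because finitely many admissible pairs force $\sup_k\max B_k/N_k<\infty$ and hence a common compact support (as in Corollary \ref{cor_fad}). Since these quantities depend on $\boldsymbol{\omega}$ only through the symbols $\omega_k$ and not through the digits $b_k$, the local dimension will turn out to be the same at $\mu(\boldsymbol{\omega})$-almost every point.

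First I would establish the two easy upper bounds, writing $\underline{d}(x)$ and $\overline{d}(x)$ for the lower and upper local dimensions of $\mu(\boldsymbol{\omega})$ at $x$ and $s(\boldsymbol{\omega})=\lim_n \log\mathbf{B}_n(\boldsymbol{\omega})/\log\mathbf{N}_n(\boldsymbol{\omega})$. These follow from the inclusion $I_n(x)\subseteq B(x,r)$ whenever $\mathrm{diam}(I_n(x))\le r$: choosing $n=n(x,r)$ maximal with this property gives $\mu(\boldsymbol{\omega})(B(x,r))\ge \mathbf{B}_n(\boldsymbol{\omega})^{-1}$, and since $\log N_{\omega_n}/\log\mathbf{N}_n\to 0$ and $\log\#B_{\omega_n}/\log\mathbf{B}_n\to0$, both $\underline{d}(x)$ and $\overline{d}(x)$ are at most $s(\boldsymbol{\omega})$. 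By the Falconer-type comparison between local dimensions and the Hausdorff and packing dimensions of a measure (see \cite{Bk_KJF2}), this already yields $\dimh\mu(\boldsymbol{\omega})\le\dimp\mu(\boldsymbol{\omega})\le s(\boldsymbol{\omega})$.

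The hard part is the matching lower bound $\underline{d}(x)\ge s(\boldsymbol{\omega})$ for $\mu(\boldsymbol{\omega})$-a.e.\ $x$, which requires an upper estimate $\mu(\boldsymbol{\omega})(B(x,r))\le C r^{s(\boldsymbol{\omega})-\epsilon}$ valid for all small $r$. The obstruction is that distinct level-$n$ cylinders may overlap, so that a single ball could accumulate the mass of many words. Here I would exploit admissibility: in dimension one a Hadamard triple $(N_k,B_k)$ forces the elements of $B_k$ to be pairwise distinct modulo $N_k$ and $\#B_k$ to divide $N_k$, so that the $\#B_k$ first-level intervals $(\,\cdot\,+b)/N_k$ lie among $N_k$ disjoint intervals tiling the common support. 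This yields a genuine Moran separation at every level, whence a ball of radius $r\asymp\mathrm{diam}(I_n)$ meets only boundedly many level-$n$ cylinders and $\mu(\boldsymbol{\omega})(B(x,r))\le C\,\mathbf{B}_n(\boldsymbol{\omega})^{-1}$. Making this separation uniform in $\boldsymbol{\omega}$ (and, if the $B_k$ are not reduced modulo $N_k$, controlling the resulting carries) is the main obstacle; once it is in place the lower bound follows from the mass distribution principle. I expect essentially all the work to lie in this overlap/separation step.

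Finally I would remove the dependence on $\boldsymbol{\omega}$. The variables $\log\#B_{\omega_k}$ and $\log N_{\omega_k}$ are i.i.d., bounded, and integrable under the Bernoulli measure $\mathbb{P}$, with $\mathbb{E}[\log N_{\omega_1}]=\sum_k p_k\log N_k>0$ since $N_k\ge2$. By Kolmogorov's strong law of large numbers, for $\mathbb{P}$-a.e.\ $\boldsymbol{\omega}$,
\begin{equation*}
s(\boldsymbol{\omega}) = \lim_{n\to\infty}\frac{\frac1n\sum_{k=1}^n\log\#B_{\omega_k}}{\frac1n\sum_{k=1}^n\log N_{\omega_k}} = \frac{\sum_{k=1}^M p_k\log\#B_k}{\sum_{k=1}^M p_k\log N_k}.
\end{equation*}
Combining this with the exact-dimensionality $\dimh\mu(\boldsymbol{\omega})=\dimp\mu(\boldsymbol{\omega})=s(\boldsymbol{\omega})$ established above proves \eqref{equation_dim_expression} for $\mathbb{P}$-a.e.\ $\boldsymbol{\omega}\in\Omega$.
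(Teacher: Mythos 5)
Your proposal is correct and is essentially the paper's own argument: compute the local dimension of each realization $\mu(\boldsymbol{\omega})$ by comparing balls with digit cylinders, pass to $\dimh$ and $\dimp$ via the Frostman/mass-distribution comparison, and identify the almost-sure value of $\lim_{n}\log(\#B_{\omega_1}\cdots\#B_{\omega_n})/\log(N_{\omega_1}\cdots N_{\omega_n})$ by the strong law of large numbers (the paper invokes Birkhoff's ergodic theorem for the Bernoulli measure, which amounts to the same thing here). The one step you isolate as the main difficulty --- the upper bound $\mu(\boldsymbol{\omega})(B(x,r))\le C\,(\#B_{\omega_1}\cdots\#B_{\omega_{n}})^{-1}$ for $r\asymp (N_{\omega_1}\cdots N_{\omega_n})^{-1}$, i.e.\ controlling cylinder overlaps --- is exactly the step the paper compresses into the single phrase ``uniformly distributed on its support,'' so your sketch of how to fill it (digits of an admissible pair are distinct modulo $N_k$, and finiteness of the family bounds $\max B_k/N_k$ and hence the carries) is, if anything, more explicit than the published proof on precisely the point you flag.
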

	\begin{proof}
		We only give the proof for Hausdorff dimension since the argument for packing dimension is almost identical.
		
		Fix $\boldsymbol{\omega} \in \Omega$, and we write $spt \big(\mu(\boldsymbol{\omega})\big)$ for the support of $\mu(\boldsymbol{\omega})$. Since $\mu(\boldsymbol{\omega})$ is uniformly distributed on its support, for every $x \in spt\big(\mu(\boldsymbol{\omega})\big)$, the local dimension of $\mu(\boldsymbol{\omega})$ at $x$ is given by
$$
				\underline{\dim}_{\mathrm{loc}}\mu(\boldsymbol{\omega})(x) = \varliminf_{r\to 0} \dfrac{\log \mu(B(x,r))}{\log r} = -\varliminf_{r\to 0} \dfrac{\log(\#B_{\omega_{1}}\#B_{\omega_{2}}\cdots \#B_{\omega_{k(r)}}) }{\log r},
$$
		where $k(r)$ is the positive integer such that
		\begin{equation}\label{eq_kr}
			\dfrac{1}{N_{\omega_{1}}N_{\omega_{2}}\cdots N_{\omega_{k(r)}}} \leq r < \dfrac{1}{N_{\omega_{1}}N_{\omega_{2}}\cdots N_{\omega_{k(r)-1}}}.
		\end{equation}		
		Since  $k(r)$ tends to infinity as $r\to 0$ and $\sup \{N_{k} : k = 1, \dots, M\} < \infty$,  the inequality \eqref{eq_kr} implies
		\begin{equation*}
			\lim\limits_{r\to 0} \dfrac{\log(N_{\omega_{1}}N_{\omega_{2}}\cdots N_{\omega_{k(r)}})}{\log r} = -1.
		\end{equation*}
		Hence we have
		\begin{equation*}
			\underline{\dim}_{\mathrm{loc}}\mu(\boldsymbol{\omega})(x) =\varliminf_{k\to \infty}\dfrac{\log(\#B_{\omega_{1}}\#B_{\omega_{2}}\cdots \#B_{\omega_{k}}) }{\log(N_{\omega_{1}}N_{\omega_{2}}\cdots N_{\omega_{k}})}= \varliminf_{k\to \infty}\dfrac{\sum_{i=1}^k \log(\#B_{\omega_{i}}) }{\sum_{i=1}^k \log N_{\omega_{i}}},
		\end{equation*}
		for all $x\in spt (\mu(\boldsymbol{\omega}))$. By Frostman Lemma \cite{Falcon97}, the Hausdorff dimension of $\mu(\boldsymbol{\omega})$ is
		\begin{equation}\label{eq_dim_H_associated_to_r}
			\dimh\mu(\boldsymbol{\omega}) = \varliminf_{k\to \infty}\dfrac{\sum_{i=1}^k \log\#B_{\omega_{i}} }{\sum_{i=1}^k \log N_{\omega_{i}}}.
		\end{equation}

		Since $\mathbb{P}$ is the  Bernoulli probability measure on $\Omega $ generated by $(p_{1},p_{2}, \dots, p_{M})$, $\mathbb{P}$ is an ergodic measure with respect to  the left shift $\sigma$ on $\Omega$, see \cite{Walters-1982} for details. By the Birkhoff's ergodic theorem,  for each $ k=1,2,\ldots,M$, we have that  for $\mathbb{P}$-a.e. $\bu=\alpha_1\alpha_2\cdots \in \Omega$,
		\begin{equation}\label{eq_frequence_and_probability_Birkhoff}
			\lim\limits_{n\to\infty}\dfrac{\# \{1\leq i \leq n : \alpha_{i} = k\}}{n} = p_{k}.
		\end{equation}
		Combining \eqref{eq_frequence_and_probability_Birkhoff}  with \eqref{eq_dim_H_associated_to_r}, we obtain that
		\begin{eqnarray*}
			\dimh\mu(\boldsymbol{\omega}) &=& \varliminf_{n\to \infty}\dfrac{ \sum_{i=1}^{M} \log\#B_{\omega_{i}} }{ \sum_{i=1}^{M}\log N_{\omega_{i}}}\\
			&=& \lim_{n\to \infty} \dfrac{ \frac{1}{n}\sum_{i=1}^{M} \#\{1\leq i \leq n : \omega_{i} = k\} \log\#B_{\omega_{i}} }{\frac{1}{n}\sum_{i=1}^{M} \#\{1\leq i \leq n : \omega_{i} = k\} \log N_{\omega_{i}}}\\
			&=&\dfrac{\sum_{k=1}^{M} p_{k} \log \# B_{k}}{\sum_{k=1}^{M} p_{k} \log N_{k}},
		\end{eqnarray*}
	for $\mathbb{P}$-a.e. $\boldsymbol{\omega} \in \Omega$,	and the conclusion holds.
	\end{proof}

	We write
	$$
	D=\{\mathbf{x} = (x_{1},x_{2},\dots, x_{M}) \in \mathbb{R}^{M} : \sum_{i=1}^M x_{i} = 1, 0\leq x_{i} \leq 1, i = 1,2,\dots, M \}.
	$$
Given $0<a_{i}\leq b_{i}, i=1,2,\dots, M$, we define a function $f:D\to (0,1]$ by
		\begin{equation*}
			f(\mathbf{x}) = \dfrac{\sum_{i=1}^{M}a_{i}x_{i}}{\sum_{i=1}^{M}b_{i}x_{i}},
		\end{equation*}
and we have the following simple fact.	
	\begin{proposition}\label{prop_dim_property}
		The function $f$ is continuous on $D$ with
		\begin{equation*}
			\max\limits_{\mathbf{x}\in D} f(\mathbf{x}) = \max\limits_{1\leq i \leq M} \frac{a_{i}}{b_{i}}
			\qquad	\textit{and } \qquad		\min\limits_{\mathbf{x}\in D} f(\mathbf{x}) = \min\limits_{1\leq i \leq M} \frac{a_{i}}{b_{i}}.
		\end{equation*}
	\end{proposition}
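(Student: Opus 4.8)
The plan is to treat $f$ as a \emph{mediant} of the ratios $a_i/b_i$ and to exploit that its numerator and denominator are linear in $\mathbf{x}$ with nonnegative coefficients on the simplex $D$. First I would dispose of continuity. Since $b_i>0$ for every $i$ and every $\mathbf{x}\in D$ satisfies $x_i\ge 0$ together with $\sum_{i=1}^M x_i=1$, the denominator is bounded away from zero:
$$
\sum_{i=1}^M b_i x_i \ge \Big(\min_{1\le i\le M} b_i\Big)\sum_{i=1}^M x_i = \min_{1\le i\le M} b_i > 0.
$$
As both the numerator $\sum_i a_i x_i$ and the denominator $\sum_i b_i x_i$ are linear, hence continuous, and the denominator never vanishes on $D$, the quotient $f$ is continuous on $D$.

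For the extremal values, write $M^{\ast}=\max_{1\le i\le M} a_i/b_i$ and $m=\min_{1\le i\le M} a_i/b_i$. For the upper bound I would use that $a_i\le M^{\ast} b_i$ for every $i$; multiplying by $x_i\ge 0$ and summing gives $\sum_i a_i x_i \le M^{\ast}\sum_i b_i x_i$, and dividing through by the positive denominator yields $f(\mathbf{x})\le M^{\ast}$ for all $\mathbf{x}\in D$. To see the bound is attained, choose an index $i_0$ with $a_{i_0}/b_{i_0}=M^{\ast}$ and take the vertex $\mathbf{x}^{\ast}\in D$ with $x_{i_0}=1$ and $x_j=0$ for $j\ne i_0$; then $f(\mathbf{x}^{\ast})=a_{i_0}/b_{i_0}=M^{\ast}$. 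The lower bound is entirely symmetric: from $a_i\ge m\,b_i$ one obtains $f(\mathbf{x})\ge m$, with equality at the vertex of $D$ supported on an index $j_0$ where $a_{j_0}/b_{j_0}=m$.

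There is no genuine obstacle here; the single point deserving care is confirming that the denominator stays strictly positive, so that continuity holds and the two scalar inequalities may be divided through, and this is exactly what the simplex constraint $\sum_i x_i=1$ guarantees. Conceptually, the extrema land at the vertices of $D$ because $f$ is a ratio of linear functions, whose level sets $\{\mathbf{x}: \sum_i (a_i-c\,b_i)x_i=0\}$ are intersections of $D$ with hyperplanes through the origin; the weighted-average inequality above makes this explicit without appealing to any general theory.
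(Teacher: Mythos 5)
Your proof is correct and complete. The paper itself offers no proof of this proposition---it is introduced as ``the following simple fact'' and stated without argument---so your write-up simply supplies the routine details the authors left to the reader: continuity via the denominator bound $\sum_i b_i x_i \ge \min_i b_i > 0$ on the simplex, and the two-sided mediant inequality $m \le f(\mathbf{x}) \le M^{\ast}$ with equality attained at the vertices of $D$, which is exactly the standard argument one would expect here.
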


	Next, we prove that random measures have intermediate-value property by applying Lemma \ref{lemma_dim_expression} and Proposition \ref{prop_dim_property}.
	
	\begin{proof}[Proof of Theorem \ref{thm_imp}]
		For $s\in(0,1]$, there exists an integer  $n_{0}>0$ such that $\frac{1}{n_{0}}<s$. Given an integer $N\geq 2$, we set $N_{1} = N^{n_{0}}$ and $B_1=\{0,1,\ldots, N-1\}$,  and for each $k = 2, 3, \dots, M$, we write
		\begin{equation*}
			N_{k} = N,\qquad \textit{and } \qquad B_{k} = \{0,1,\dots, N-1\}.
		\end{equation*}
		It is clear that  $(N_{k}, B_{k})$ is an admissible pair for every $1\leq k \leq M$. Let $\mu$ be given by \eqref{def_corresponding_P_of_random_measure} with respect to $\{(N_{k}, B_{k})\}_{k=1}^M$.

		For every probability vector $\bp \in D$, let $\mathbb{P}$ be the Bernoulli measure on $\Omega$ generated by $\bp$.  By Corollary \ref{cor_fad}, $\mu$ is a spectral random measure  $\mathbb{P}$-almost surely. By  Lemma \ref{lemma_dim_expression}, we have that
		$$
		\dimh \mu=\dimp \mu= \dfrac{\sum_{k=1}^{M} p_{k} \log \# B_{k}}{\sum_{k=1}^{M} p_{k} \log N_{k}},
		$$
		for $\mathbb{P}$-a.e. $\boldsymbol{\omega} \in \Omega$.
		
		Let
		$$
		f(\bp)=\dfrac{\sum_{k=1}^{M} p_{k} \log \# B_{k}}{\sum_{k=1}^{M} p_{k} \log N_{k}}.
		$$
		Since $\max\limits_{1\leq i \leq m} \dfrac{\log \# B_{i}}{\log N_{i}} = 1$ and $\min\limits_{1\leq i \leq m} \dfrac{\log \# B_{i}}{\log N_{i}} = \frac{1}{n_{0}} < s$. By Proposition \ref{prop_dim_property}, there exists a probability vector $\mathbf{p}_0$ such that $	f(\bp_0) = s. $
		This implies that for the Bernoulli measure  $\mathbb{P}$ associated with $\mathbf{p}_0$, we have that
		\begin{equation}\label{eq_dim_a.e._equal_to_expression}
			\dimh\mu =\dimp\mu=  s
		\end{equation}
		for $\mathbb{P}$-a.e. $\boldsymbol{\omega} \in \Omega$.
	\end{proof}
	
	Notice that the dimensions  of spectral random measure is effected by the probability vector $\bp$. This reveals that random measures may have very rich geometric structures on their own.
	\begin{corollary}\label{cor_dimensions}
		Given  reals $0<a<b\leq 1$,  there exists a spectral random measure $\mu$ on $\Omega$  such  that for every $s\in (a,b)$,  we have a  Bernoulli measure  $\mathbb{P}$  on $\Omega$  such  that
$$
\dimh\mu=s,
$$
		for $\mathbb{P}$-a.e. $\boldsymbol{\omega} \in \Omega$.
	\end{corollary}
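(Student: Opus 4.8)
The plan is to view this corollary as a single-measure refinement of Theorem~\ref{thm_imp}: instead of building a possibly different random measure for each target dimension, I would fix one finite family of admissible pairs and realize the whole interval $(a,b)$ by varying only the Bernoulli measure $\mathbb{P}$. The key observation is that, by Lemma~\ref{lemma_dim_expression}, once the pairs $\{(N_k,B_k)\}_{k=1}^M$ are fixed, the $\mathbb{P}$-a.e.\ value of $\dimh\mu=\dimp\mu$ equals
$$
f(\mathbf{p})=\frac{\sum_{k=1}^M p_k\log\#B_k}{\sum_{k=1}^M p_k\log N_k},
$$
which is exactly the function studied in Proposition~\ref{prop_dim_property} with $a_k=\log\#B_k$ and $b_k=\log N_k$. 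Hence the set of dimensions attainable over all probability vectors is the closed interval
$$
\Big[\min_{1\le k\le M}\frac{\log\#B_k}{\log N_k},\ \max_{1\le k\le M}\frac{\log\#B_k}{\log N_k}\Big],
$$
and the problem reduces to choosing admissible pairs whose ratios $\log\#B_k/\log N_k$ straddle $(a,b)$.

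Concretely, I would take $M=2$. Fix any integer $m\ge2$ and an integer $t$ with $t>1/a$, and set
$$
(N_1,B_1)=\big(m^{t},\{0,1,\dots,m-1\}\big),\qquad (N_2,B_2)=\big(m,\{0,1,\dots,m-1\}\big).
$$
Both are admissible pairs by the standard discrete-Fourier construction: for $(m,\{0,\dots,m-1\})$ the matrix with $L=\{0,\dots,m-1\}$ is the (unitary) DFT matrix, while for $(m^{t},\{0,\dots,m-1\})$ one may take $L=\{0,m^{t-1},2m^{t-1},\dots,(m-1)m^{t-1}\}$, for which the phases reduce to $e^{-2\pi i bj/m}$ and the matrix is again unitary. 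Their ratios are $\log\#B_1/\log N_1=1/t<a$ and $\log\#B_2/\log N_2=1$. Let $\mu$ be the spectral random measure on $\Omega=\{1,2\}^{\mathbb N}$ attached to this pair of admissible pairs as in \eqref{def_corresponding_P_of_random_measure}; by Corollary~\ref{cor_fad} it is a spectral random measure for every Bernoulli measure $\mathbb P$.

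Finally I would combine the pieces. By Proposition~\ref{prop_dim_property} the range of $f$ over the simplex $D$ is $[1/t,1]$, and since $1/t<a<b\le1$ we have $(a,b)\subseteq[1/t,1]$; as $D$ is connected and $f$ is continuous, the intermediate value theorem gives, for each $s\in(a,b)$, a probability vector $\mathbf{p}_s\in D$ (indeed an interior one, with all entries positive, since $1/t<s<1$) such that $f(\mathbf{p}_s)=s$. Taking $\mathbb P$ to be the Bernoulli measure generated by $\mathbf{p}_s$ and invoking Lemma~\ref{lemma_dim_expression} then yields $\dimh\mu=\dimp\mu=s$ for $\mathbb P$-a.e.\ $\boldsymbol{\omega}\in\Omega$. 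The only genuine work is the bookkeeping of the two items above—verifying admissibility of the chosen pairs and that their ratios bracket $(a,b)$—since the continuity/intermediate-value mechanism and the almost-sure dimension formula are already supplied by Proposition~\ref{prop_dim_property} and Lemma~\ref{lemma_dim_expression}; thus I do not expect a substantial obstacle beyond this.
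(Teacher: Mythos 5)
Your proposal is correct and is essentially the paper's own argument: the paper proves Corollary~\ref{cor_dimensions} by fixing the construction from Theorem~\ref{thm_imp} (pairs $(N^{n_0},\{0,\dots,N-1\})$ and $(N,\{0,\dots,N-1\})$ with $1/n_0<a$, which is your $M=2$, $t=n_0$ choice up to notation) and then varying only the Bernoulli vector via Lemma~\ref{lemma_dim_expression} and Proposition~\ref{prop_dim_property}. Your explicit verification of admissibility and the remark that the chosen vector is interior are fine additions but do not change the route.
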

	\begin{proof}
		It is a direct consequence of the argument in Theorem \ref{thm_imp}
	\end{proof}


\begin{thebibliography}{10}
		
		
		
		
		
		
		\bibitem{An-Fu-Lai-2019}
		L.-X. An, X.-Y. Fu, C.-K. Lai.
		\newblock On spectral Cantor-Moran measures and a variant of Bourgain's sum of sine problem.
		\newblock {\em Adv. Math.}, 349: 84–124, 2019.
		
		\bibitem{An-He-He-2019}
		L.-X. An, L. He, X.-G. He.
		\newblock Spectrality and non-spectrality of the Riesz product measures with three elements in digit sets.
		\newblock {\em J. Funct. Anal.}, 277(1): 255–278, 2019.
		
		\bibitem{An-He-2014}
		L.-X. An, X.-G. He.
		\newblock A class of spectral Moran measures.
		\newblock {\em J. Funct. Anal.}, 266(1): 343–354, 2014.
		
		\bibitem{An-He-Lau-2015}
		L.-X. An, X.-G. He, K.-S. Lau.
		\newblock Spectrality of a class of infinite convolutions.
		\newblock {\em Adv. Math.}, 283: 362–376, 2015.
		
		\bibitem{Baire-1899}
		R.~Baire.
		\newblock Sur les fonctions des variables r\'eelles.
		\newblock{\em Ann. Mat.}, \MakeUppercase{\romannumeral 3}(3):1-123, 1899.
		
		
		\bibitem{Billi99} P. Billingsley.
		\newblock{\em Convergence of Probability Measures},
		\newblock{Wiley.} 1968
		
		
		\bibitem{Bogac18}
		V.~I. Bogachev.
		\newblock {\em Weak convergence of measures}, volume 234 of {\em Mathematical Surveys and Monographs}.
		\newblock American Mathematical Society, Providence, Rhode Island, 2018.
		
		\bibitem{Cinlar11}
		E.~Cinlar.
		\newblock {\emph{Probability and Stochastics.}}
		\newblock Springer New York, NY, 2011.
		
		
		
		\bibitem{Dai-He-Lau-2014}
		X.-R. Dai, X.-G. He, K.-S. Lau.
		\newblock On spectral $N$-Bernoulli measures.
		\newblock {\em Adv. Math.}, 259: 511–531, 2014.
		
		\bibitem{Dutkay-Han-Sun-2009}
		D. Dutkay, D. Han, Q. Sun.
		\newblock On the spectra of a Cantor measure.
		\newblock {\em Adv. Math.}, 221(1): 251–276, 2009.
		
		\bibitem{Dutkay-Han-Sun-2014}D. Dutkay, D. Han, Q. Sun.
		\newblock Divergence of the mock and scrambled Fourier series on fractal measures.
		\newblock {\em Trans. Amer. Math. Soc.}, 366(4): 2191-2208, 2014.
		
		\bibitem{Dutkay-Haussermann-Lai-2019}
		D. Dutkay, J. Haussermann, C.-K. Lai.
		\newblock Hadamard triples generate self-affine spectral measures.
		\newblock {\em Trans. Amer. Math. Soc.}, 371(2): 1439–1481, 2019.
		
		\bibitem{Dutkay-Lai-2014}
		D. Dutkay, C.-K. Lai.
		\newblock Uniformity of measures with Fourier frames.
		\newblock {\em Adv. Math.} 252: 684–707, 2014.
		
		\bibitem{Dutkay-Jorgensen-2012}
		D.~Dutkay and P.~Jorgensen.
		\newblock Fourier duality for fractal measures with affine scales.
		\newblock {\em Math. Comp.}, 81:2253--2273, 2012.
		
		\bibitem{Dutkay-Lai-2017}
		D. Dutkay, C.-K. Lai.
		\newblock Spectral measures generated by arbitrary and random convolutions.
		\newblock {\em J. Math. Pures Appl. (9)}, 107(2): 183–204, 2017.
		
		
		\bibitem{Bk_KJF2}
		K.~J. Falconer.
		\newblock {\em Fractal Geometry-Mathematical Foundations and Applications}.
		\newblock John Wiley \& Sons Inc., Hoboken, NJ, 3rd edt, 2014.

		\bibitem{Falcon97}
		K.~J. Falconer.
		\newblock {\em Techniques in fractal geometry,} volume 3.
		\newblock John Wiley \& Sons Inc., 1997.


		
		
		\bibitem{Farkas-Matolcsi-Mora-2006}
		B.~Farkas, M.~Matolcsi, and P.~M\'{o}ra.
		\newblock On {F}uglede's conjecture and the existence of universal spectra.
		\newblock {\em J. Fourier Anal. Appl.}, 12(5):483--494, 2006.
		
		\bibitem{Farkas-Revesz-2006}
		B.~Farkas and S.~R\'{e}v\'{e}sz.
		\newblock Tiles with no spectra in dimension 4.
		\newblock {\em Math. Scand.}, 98(1):44--52, 2006.

		\bibitem{Folla99}
		G.~B.~Folland.
		\newblock \emph{Real Analysis: Modern Techniques and Their Applications}, Second Edition.
		\newblock John Wiley \& Sons, Inc., 1999.
		

	
		\bibitem{Fu-He-2017}
		Y.-S. Fu, L. He.
		\newblock Scaling of spectra of a class of random convolution on $\mathbb{R}$.
		\newblock {\em J. Funct. Anal.}, 273(9): 3002–3026, 2017.
		
		\bibitem{Fu-He-Wen-2018}
		Y.-S. Fu, X.-G. He, Z.-X. Wen.
		\newblock Spectra of Bernoulli convolutions and random convolutions.
		\newblock {\em J. Math. Pures Appl. (9)}, 116: 105–131, 2018.
		
		
		
		\bibitem{Fuglede-1974}
		B. Fuglede.
		\newblock Commuting self-adjoint partial differential operators and a group theoretic problem.
		\newblock {\em J. Funct. Anal.}, 16: 101–121, 1974.
		
		\bibitem{He-Tang-Wu-2019}
		X.-G. He, M.-W. Tang, Z.-Y. Wu.
		\newblock Spectral structure and spectral eigenvalue problems of a class of self-similar spectral measures.
		\newblock {\em J. Funct. Anal.}, 277(10): 3688–3722, 2019.
		
		\bibitem{He-Lai-Lau-2013}
		X.-G. He, C.-K. Lai, and K.-S. Lau.
		\newblock Exponential spectra in {$L^2(\mu)$}.
		\newblock {\em Appl. Comput. Harmon. Anal.}, 34(3):327--338, 2013.
		
		
		\bibitem{Jessen-Wintner-1935}
		B.~Jessen and A.~Wintner.
		\newblock Distribution functions and the {R}iemann zeta function.
		\newblock {\em Trans. Amer. Math. Soc.}, 38(1):48--88, 1935.
		
		\bibitem{Jorgensen-Pedersen-1998}
		P. Jorgensen, S. Pedersen.
		\newblock Dense analytic subspaces in fractal $L^2$-spaces.
		\newblock {\em J. Anal. Math.}, 75:185–228, 1998.
		
		
		\bibitem{Kall}		
		O. Kallenberg
		\newblock \emph{Foundations of modern probability}, Third edition
		\newblock Probab. Theory Stoch. Model., 99  Springer, Cham, 2021.
		
		
		\bibitem{Katznelson-2004}
		Y. Katznelson,
		\newblock \emph{An introduction to harmonic analysis}, Third edition.
		\newblock Cambridge University Press, Cambridge, 2004.
		
		\bibitem{Kechris95}
		A. ~S.~Kechris.
		\newblock{\em Classical Descriptive Set Theory}.
		\newblock{Springer-Verlag New York, Inc. 1995.}
	


		\bibitem{Kolountzakis-Matolcsi-2006b}
		M.~N. Kolountzakis and M.~Matolcsi.
		\newblock Complex {H}adamard matrices and the spectral set conjecture.
		\newblock {\em Collect. Math.}, Vol. Extra: 281--291, 2006.
		
		\bibitem{Kolountzakis-Matolcsi-2006a}
		M.~N. Kolountzakis and M.~Matolcsi.
		\newblock Tiles with no spectra.
		\newblock {\em Forum Math.}, 18(3):519--528, 2006.
		




		
		\bibitem{Laba-Wang-2002}
		I. Łaba, Y. Wang.
		\newblock On spectral Cantor measures.
		\newblock {\em J. Funct. Anal.}, 193(2): 409–420, 2002.
		
		\bibitem{Lev-Matolcsi-2019}
		N. Lev, M. Matolcsi.
		\newblock The Fuglede conjecture for convex domains is true in all dimensions.
		\newblock {\em Acta Math.}, 228(2): 385–420,2022.
		
		\bibitem{Li-2009}
		J.-L. Li.
		\newblock Non-spectrality of planar self-affine measures with three-elements digit set.
		\newblock {\em J. Funct. Anal.}, 257(2): 537–552, 2009.
		
		\bibitem{Li-Wu-2024}J.~Li and Z.~Wu.
		\newblock On the intermediate value property of spectra for a class of {M}oran spectral measures.
		\newblock {\em Appl. Comput. Harmon. Anal.}, 68: Paper No. 101606, 14pp, 2024.
		
		
		\bibitem{Li-Miao-Wang-2022}
		W. Li, J. J. Miao, Z. Wang.
		\newblock Weak convergence and spectrality of infinite convolutions.
		\newblock {\em Adv. Math.}, 404: Paper No. 108425, 2022.
		
		
		\bibitem{Li-Miao-Wang-2021}
		W. Li, J. J. Miao, Z. Wang.
		\newblock Spectrality of random convolutions generated by finitely many Hadamard triples.
		\newblock {\em Nonlinearity},37(1): Paper No. 015003, 21 pp, 2024.
		
		
		\bibitem{LiMiaoWang24} W. X. Li, J. J. Miao, Z. Q. Wang.
		\newblock{Spectrality of infinite convolutions and random convolutions.}
		\newblock{\em J. Funct. Anal..}
		\newblock{287 (2024), no. 7, Paper No. 110539.}
		
		
		\bibitem{Liu-Dong-Li-2017}
		J.-C. Liu, X.-H. Dong, J.-L. Li.
		\newblock Non-spectral problem for the planar self-affine measures.
		\newblock {\em J. Funct. Anal.}, 273(2): 705–720, 2017.
		
		\bibitem{Liu-Lu-Zhou-2023b}
		J.~Liu, Z.-Y. Lu, and T.~Zhou.
		\newblock The spectrality of cantor-moran measure and fuglede's conjecture.
		\newblock {\em arXiv:2305.12135}, 2023.
		
		
		\bibitem{Matolcsi-2005}
		M. Matolcsi.
		\newblock Fuglede's conjecture fails in dimension $4$.
		\newblock {\em Proc. Amer. Math. Soc.}, 133(10): 3021–3026, 2005.


		\bibitem{Mauldin74}
		R.~.D.~Mauldin.
		\newblock Baire functions, Borel Sets, and Ordinary Function Systems.
		\newblock{\em Adv. Math.}, 12: 418-450, 1974.

		
		
		
		\bibitem{Rudin-1987}
		W. Rudin,
		\newblock \emph{Real and complex analysis}, Third edition.
		\newblock McGraw-Hill Book Co., New York, 1987.
		
		%
		
		\bibitem{Strichartz-2000}
		R. Strichartz.
		\newblock Mock Fourier series and transforms associated with certain Cantor measures.
		\newblock {\em J. Anal. Math.}, 81: 209–238, 2000.
		
		\bibitem{Strichartz-2006}
		R. Strichartz.
		\newblock Convergence of mock Fourier series.
		\newblock {\em J. Anal. Math.}, 99: 333–353, 2006.
		
		\bibitem{Tao-2004}
		T. Tao.
		\newblock Fuglede's conjecture is false in $5$ and higher dimensions.
		\newblock {\em Math. Res. Lett.}, 11(2-3): 251–258, 2004.

		\bibitem{BBT08}
		B.~S.~Thomson, J.~B.~Bruckner, A.~M.~Bruckner.
		\newblock{\em Real Analysis}, Second Edition.
		\newblock{www.classicalrealanalysis.com}, 2008.
		
		
		\bibitem{Rudin87}
		W.~Rudin.
		\newblock {\em Real and Complex analysis}, 3rd Edition.
		\newblock McGraw-Hill Book Co., Singapore, 1987.
		
		
		\bibitem{Walters-1982}P. Walters,
		\newblock \emph{An introduction to ergodic theory},
		\newblock Springer-Verlag, New York-Berlin, 1982.
		
	
		
		

		
		
		
		
	\end{thebibliography}
\end{document}